\def\TC{\protect\operatorname{TC}}
\def\cat{\protect\operatorname{cat}}
\def\zcl{\protect\operatorname{zcl}}
\def\C{\protect\operatorname{Conf}}
\def\cl{\protect\operatorname{cl}}
\def\hdim{\protect\operatorname{hdim}}
\def\bp{\mbox{\contour{black}{$\searrow$}}}
\def\sp{\mbox{\contour{black}{$\nearrow$}}}
\def\lbp{\mbox{\contour{black}{$\swarrow$}}}
\def\lsp{\mbox{\contour{black}{$\nwarrow$}}}
\newcommand{\ma}[2]{\langle\genfrac{}{}{0pt}{1}{#1}{#2}\rangle}
\newcommand{\mb}[2]{\bigl\{\hspace{-.5mm}\genfrac{}{}{0pt}{1}{#1}{#2}\hspace{-.5mm}\bigr\}}
\newcommand{\md}[4]{\left[\genfrac{}{}{0pt}{1}{#1}{#3}\genfrac{}{}{0pt}{1}{#2}{#4}\right]}
\newcommand{\nb}[6]{\langle\genfrac{}{}{0pt}{1}{#1}{#4}\genfrac{}{}{0pt}{1}{#2}{#5}\genfrac{}{}{0pt}{1}{#3}{#6}\rangle}
\newcommand{\mtt}[6]{\big[\genfrac{}{}{0pt}{1}{#1}{#4}\genfrac{}{}{0pt}{1}{#2}{#5}\genfrac{}{}{0pt}{1}{#3}{#6}\big]}
\newcommand{\mt}[6]{\left[\genfrac{}{}{0pt}{1}{#1}{#4}\genfrac{}{}{0pt}{1}{#2}{#5}\genfrac{}{}{0pt}{1}{#3}{#6}\right]}
\newcommand\xdownarrow[1][2ex]{\mathrel{\rotatebox{90}{$\xleftarrow{\rule{#1}{0pt}}$}}}
\newtheorem{proposition}{Proposition}[section]
\newtheorem{corollary}[proposition]{Corollary}
\newtheorem{definition}[proposition]{Definition}
\newtheorem{theorem}[proposition]{Theorem}
\newtheorem{remark}[proposition]{Remark}
\newtheorem{example}[proposition]{Example}
\newtheorem{lemma}[proposition]{Lemma}
\newtheorem{examples}[proposition]{Examples}
\begin{document}

\title{An algorithmic discrete gradient field and the cohomology algebra of configuration spaces of two points on complete graphs}

\author{Emilio J.~Gonz\'alez and Jes\'us Gonz\'alez}

\date{\empty}

\maketitle

\begin{abstract}
We introduce an algorithm that constructs a discrete gradient field on any simplicial complex. We show that, in all situations, the gradient field is maximal possible and, in a number of cases, optimal. We make a thorough analysis of the resulting gradient field in the case of Munkres' discrete model for $\C(K_m,2)$, the configuration space of ordered pairs of non-colliding particles on the complete graph $K_m$ on $m$ vertices. Together with the use of Forman's discrete Morse theory, this allows us to describe in full the cohomology $R$-algebra $H^*(\C(K_m,2);R)$ for any commutative unital ring $R$. As an application we prove that, although $\C(K_m,2)$ is outside the ``stable'' regime, all its topological complexities are maximal possible when $m\geq4$.
\end{abstract}

{\small 2020 Mathematics Subject Classification: 55R80, 57M15, 57Q70.}

{\small Keywords and phrases: Discrete Morse theory, discrete gradient field, ordered configuration spaces on graphs. }

\section{Introduction}

Configuration spaces $\C(X,n)=\{(x_1,\ldots,x_n)\in X^n\colon x_i\neq x_j \text{ for } i\neq j\}$ are important ubiquitous objects in mathematics and its applications. They are reasonably understood when $X$ is a manifold of dimension at least two. For $X=\Gamma$ a graph, $\C(\Gamma,n)$ has attracted much attention in recent years due to its role in geometric group theory, and also because graph configuration spaces provide natural models for the problem of planning collision-free motion of multiple agents performing on a system of tracks, see~\cite{farbergraphs, MR1873106, MR1882808}. Yet, the current understanding of the topology of $\C(\Gamma,n)$ appears to be far more limited than that for the higher dimensional case. This is due in part to the lack of Fadell-Neuwirth fibrations relating graph configuration spaces for different values of~$n$. Informally, unlike its higher dimensional counterpart, one-dimensional motion planning actually requires global knowledge of the ambient graph. Thus, while additive information about the homology of graph configuration spaces is already available in the literature (see for instance~\cite{MR2701024, MR4080487, MR3797072, MR2745669, MR2605308, MR3000570, MR3659699, MR4050061}), explicit cup-product descriptions seem to be scarser: Farley-Sabalka's work \cite{MR2359035, MR2355034, MR2949126} (see also~\cite{tere}) relates (unordered) configurations on trees to exterior face rings, while Barnett-Farber's work~\cite{MR2491587} describes in full the rational cohomology algebra of ordered pairs of points on planar graphs. We close the gap by focusing on a diametrically different class of graphs. Indeed, we give a full description of the cohomology algebra, with any ring coefficients, of configuration spaces of ordered pairs of points on complete graphs.

Our cohomological calculations are based on discrete Morse theory techniques. For this, we describe and study an algorithm that constructs a discrete gradient field $W$ on any finite ordered abstract simplicial complex $(K,\preceq)$. The resulting field $W$ turns out to be maximal\footnote{Maximality refers to the fact that all faces and all cofaces of a $W$-critical face are involved in a Morse pairing.} and, in many cases, it is either optimal\footnote{Optimality refers to the property that the number of critical cells in a given dimension agrees with the corresponding Betti number.} (perhaps after a convenient selection of $\preceq$), or close to being so. Our algorithm can be thought of as a generalization of the inclusion-exclusion process with respect to a chosen vertex giving an optimal gradient field collapsing a full simplex to the chosen vertex. In the general case of an ordered simplicial complex $(K,\preceq)$, the ordering $\preceq$ plays a heuristic role that guides the inclusion-exclusion process.

Standard preliminary facts are reviewed in Section~\ref{secprelim}, while our algorithmic gradient field is introduced and studied in Section~\ref{algorithm-field}. Section~\ref{aplicacion} is devoted to the cohomology of configuration spaces of ordered pairs of points in complete graphs. The application to topological complexity is given in Section~\ref{TCsec}.

\section{Preliminaries}\label{secprelim}
\subsection{Munkres' model for 2-particle configuration spaces }\label{munkres-model}
Let $D$ be a full subcomplex of a given abstract simplicial complex~$X$, i.e., we assume that every simplex of $X$, whose vertices lie in $D$, is itself a simplex of $D$. Consider the (necessarily full) subcomplex $C$ of $X$ consisting of the simplices $\sigma$ of $X$ whose geometric realization $|\sigma|$ is disjoint from $|D|$. The vertices of $X$ are partitioned into those of $D$ and those of $C$ and, as observed in \cite[Lemma~70.1]{MR755006}, the linear homotopy 
$$
H\colon\left(\rule{0mm}{4mm}|X|-|D|\right)\times [0,1]\to |X|-|D|, \quad H(x,s)=(1-s)\cdot x+s\cdot \sum_{i=1}^r\frac{t_i}{\sum_{k=1}^rt_k}\rule{.3mm}{0mm}c_i,
$$
exhibits $|C|$ as a strong deformation retract of $|X|-|D|$. Here $x=\sum_{i=1}^rt_ic_i+\sum_{j=1}^\rho\tau_jd_j$ is the barycentric expression of $x\in |X|-|D|$ having $t_i>0<\tau_j$ for all $i$ and $j$, with $c_1,\ldots c_r$ vertices of $C$ ($r\geq1$) and $d_1,\ldots,d_\rho$ vertices of $D$ ($\rho\geq0$).

Let $K$ be a finite abstract \emph{ordered} simplicial complex, i.e., the vertex set $V$ of $K$ comes equipped with a partial ordering $\preceq$ which is linear upon restriction to any face. We will be interested in Munkres' model $C$ above when $X=K\times K$ is the ordered product, with $D$ corresponding to the subcomplex whose geometric realization is the diagonal $\Delta_{|K|}$ in $|K\times K|=|K|\times|K|$. The vertex set of $K\times K$ is $V\times V$, with elements denoted as columns, while a $k$-simplex of $K\times K$ is a matrix array
\begin{equation}\label{matrix-simplex}
\left[\begin{matrix}
v_{0,1} & v_{1,1} & \ldots & v_{k,1} \\
v_{0,2} & v_{1,2} & \ldots & v_{k,2}
\end{matrix}\right]
\end{equation}
of elements in $V$ satisfying:
\begin{itemize}
\item For $i=1,2$, $v_{0,i}\preceq v_{1,i}\preceq\ldots\preceq v_{k,i}$ with $\{v_{0,i}, v_{1,i},\ldots, v_{k,i}\}$ an $\ell$-face of $K$ (possibly with $\ell\leq k$).
\item For $j=0,1,\ldots,k-1$, at least one of the inequalities $v_{j,1}\preceq v_{j+1,1}$ and $v_{j,2}\preceq v_{j+1,2}$ is strict.
\end{itemize}
Such a matrix-type simplex belongs to $D$ provided its two rows are repeated: $v_{j,1}=v_{j,2}$ for $j=0,1,\ldots, k$. In particular $D$ is a full subcomplex of $K\times K$, and we get a homotopy equivalence 
\begin{equation}\label{homotopy-equivalence}
|C|\simeq \C(|K|,2).
\end{equation}
Note that a simplex~(\ref{matrix-simplex}) belongs to $C$ precisely when $v_{j,1}\neq v_{j,2}$ for $j=0,1,\ldots, k$.
In particular, the vertex set of $C$ is $V\times V\setminus\Delta_V$ (with elements denoted as column matrices).

\subsection{Discrete Morse theory}
We review the notation and facts we need from Forman's discrete Morse theory. See~\cite{MR1358614,MR1926850} for details.

Let $K$ be a finite abstract ordered simplicial complex with ordered vertex set $(V,\preceq)$. Let $(\mathcal{F},\subseteq)$ be the face poset of $K$, i.e., $\mathcal{F}$ is the set of faces of $K$ partially ordered by inclusion. For a face $\alpha\in\mathcal{F}$, we write $\alpha^{(p)}$ to indicate that $\alpha$ is $p$-dimensional, and use the notation $\alpha=[\alpha_0,\alpha_1,\cdots,\alpha_p]$, where
\begin{equation}\label{orientorder}
\alpha_0\prec\alpha_1\prec\cdots\prec\alpha_p
\end{equation}
is the ordered list of vertices of $\alpha$. We choose the orientation $\alpha$ determined by~(\ref{orientorder}). For faces $\alpha^{(p)}\subset \beta^{(p+1)}$, consider the incidence number $\iota_{\alpha,\beta}$ of $\alpha$ and $\beta$, i.e., the coefficient $\pm1$ of $\alpha$ in the expression of $\partial(\beta)$. Here $\partial$ stands for the boundary operator in the oriented simplicial chain complex $C_*(K)$, i.e.,
$$
\partial\left([v_0,v_1,\ldots,v_i]\right)=\sum_{0\leq j\leq i}(-1)^{j}[v_0,\ldots,\widehat{\hspace{.2mm}v_j\hspace{.2mm}},\ldots,v_i],
$$
where the notation $\widehat{v}$ means that vertex $v$ is to be omitted.

Think of the Hasse diagram $H_\mathcal{F}$ of $\mathcal{F}$ as a directed graph; the vertex set of $H_{\mathcal{F}}$ is~$\mathcal{F}$ and the directed edges are the ordered pairs $(\alpha^{(p+1)},\beta^{(p)})$ with $\beta\subset\alpha$. Such a directed edge will be denoted as $\alpha^{(p+1)}\searrow\beta^{(p)}$. Let $W$ be a partial matching on $H_\mathcal{F}$, i.e., a directed subgraph of $H_\mathcal{F}$ whose vertices have degree~1. Note that the vertex set of $W$ may be a proper subset of $\mathcal{F}$. The modified Hasse diagram $H_{\mathcal{F},W}$ is the directed graph obtained from $H_\mathcal{F}$ by reversing the orientation of all edges of $W$. A reversed edge is denoted as $\beta^{(p)}\nearrow\alpha^{(p+1)}$, in which case $\alpha$ is said to be collapsible and $\beta$ is said to be redundant. In this setting, a path is an alternate chain of up-going and down-going directed edges of $H_{\mathcal{F},W}$ of either of the two forms
\begin{equation}\label{path}
\alpha_0\nearrow\beta_1\searrow\alpha_1\nearrow\cdots\nearrow \beta_k\searrow\alpha_k \mbox{ \ \ or \ \ } \gamma_0\searrow\delta_1\nearrow\gamma_1\searrow\cdots\searrow\delta_k\nearrow\gamma_k.
\end{equation}
A path as the one on the left (right) hand-side of~(\ref{path}) is called an upper (respectively, lower) path, and the path is called elementary when $k=1$, or constant when $k=0$. The sets of upper and lower paths that start on a $p$-cell $\alpha$ and end on a $p$-cell $\beta$ are denoted by $\overline{\Gamma}(\alpha,\beta)$ and $\underline{\Gamma}(\alpha,\beta)$, respectively. A \emph{mixed} path $\widetilde{\lambda}$ from a face $\beta^{(p+1)}$ to a face $\alpha^{(p)}$ is the concatenation of an edge $\beta\searrow\gamma$ and an upper path $\lambda\in\overline{\Gamma}(\gamma,\alpha)$.

Concatenation of upper/lower paths yields product maps
\begin{equation}\label{concaprod}
\overline{\Gamma}(\alpha,\beta)\times\overline{\Gamma}(\beta,\gamma)\to\overline{\Gamma}(\alpha,\gamma) \mbox{ \ and \ \ }\underline{\Gamma}(\alpha,\beta)\times\underline{\Gamma}(\beta,\gamma)\to\underline{\Gamma}(\alpha,\gamma).
\end{equation}
For instance, any upper/lower path is a product of corresponding elementary paths. The multiplicity of an upper/lower path is defined, in the elementary case, as 
$$%\begin{equation}\label{multiplicitydefinition}
\mu(\alpha_0\nearrow\beta_1\searrow\alpha_1)=-\iota_{\alpha_0,\beta_1}\cdot\iota_{\alpha_1,\beta_1}\mbox{ \ \ and \ \ } \mu(\gamma_0\searrow\delta_1\nearrow\gamma_1)=-\iota_{\delta_1,\gamma_0}\cdot\iota_{\delta_1,\gamma_1}
$$%\end{equation}
and, in the general case, multiplicity is defined to be a multiplicative function with respect to~(\ref{concaprod}). In particular, the multiplicity of any constant path is~1. Likewise, $\mu(\widetilde{\lambda}):=\iota_{\gamma,\beta}\cdot\mu(\lambda)$ defines the multiplicity of the mixed path $\widetilde{\lambda}$ given by the concatenation of the edge $\beta\searrow\gamma$ and the upper path $\lambda\in\overline{\Gamma}(\gamma,\alpha)$.

A non-constant path is called a cycle if $\alpha_0=\alpha_k$, in the upper case of (\ref{path}), or $\gamma_0=\gamma_k$, in the lower case. By construction, the cycle condition can only hold with $k>1$. The matching $W$ is said to be a gradient field on~$K$ if $H_{\mathcal{F},W}$ has no cycles. In such a case, paths are referred as gradient paths, while cells of $K$ that are neither redundant nor collapsible are said to be critical.

Critical faces and gradient paths can be used to recover (co)homological information of $K$. Explicitly, the Morse chain complex $(\mu_*(K),\partial)$ is $R$-free\footnote{Coefficients are taken in a commutative unital ring $R$, as we will ultimately be interested in cup-products.} with basis in dimension $p\geq0$ given by the oriented critical faces $\alpha^{(p)}$ of~$K$, and with Morse boundary map $\partial\colon\mu_*(K)\to\mu_{*-1}(K)$ given at a critical face $\alpha^{(p)}$ by
\begin{equation}\label{morseboundary}
\partial(\alpha^{(p)})=\sum_{\beta^{(p-1)}} \left(\sum_{\widetilde{\lambda}}\mu(\widetilde{\lambda})\right)\cdot \beta,
\end{equation}
where the outer summation runs over all critical faces $\beta^{(p-1)}$, and the inner summation runs over all mixed gradient paths $\widetilde{\lambda}$ from $\alpha$ to $\beta$. The Morse cochain complex $(\mu^*(K),\delta)$ is the dual\footnote{For the sake of brevity, we will consistently omit writing asterisks for dualized objects.} of $(\mu_*(K),\partial)$. Thus, $\mu^p(K)$ is $R$-free with basis given by the duals of the oriented critical faces $\alpha^{(p)}$ of~$K$. The value of the Morse coboundary map $\delta\colon\mu^*(K)\to\mu^{*+1}(K)$ at a (dualized) critical face $\alpha^{(p)}$ is
\begin{equation}\label{morsecoboundary}
\delta(\alpha^{(p)})=\sum_{\beta^{(p+1)}} \left(\sum_{\widetilde{\lambda}}\mu(\widetilde{\lambda})\right)\cdot \beta,
\end{equation}
where the outer summation runs over all (dualized) critical faces $\beta^{(p+1)}$, and the inner summation runs over all mixed gradient paths $\widetilde{\lambda}$ from $\beta$ to $\alpha$. In other words, the Morse theoretic incidence number of critical faces $\gamma_1^{(p)}$ and $\gamma_2^{(p+1)}$ is the multiplicity-counted number of mixed gradient paths from $\gamma_2$ to $\gamma_1$.

Gradient paths yield, in addition, a homotopy equivalence between the Morse cochain complex $\mu^*(K)$ and the simplicial cochain complex $C^*(K)$. Indeed, the formul\ae
\begin{equation}\label{quasi-isomorphisms}
\begin{matrix}
\overline{\Phi}(\alpha^{(p)})=\sum_{\beta^{(p)}}\left(\sum_{\lambda\in\overline{\Gamma}(\beta,\alpha)}\mu(\lambda)\right)\beta, & (\alpha \text{ critical, } \beta \text{ arbitrary}), \\
\underline{\Phi}(\beta^{(p)})=\sum_{\alpha^{(p)}}\left(\sum_{\lambda\in\underline{\Gamma}(\alpha,\beta)}\mu(\lambda)\right)\alpha, & (\beta \text{ arbitrary, } \alpha \text{ critical}) 
\end{matrix}
\end{equation}
determine cochain maps $\overline{\Phi}\colon \mu^*(K)\to C^*(K)$ and $\underline{\Phi}\colon C^*(K)\to \mu^*(K)$ inducing cohomology isomorphisms $\overline{\Phi}^*$ and $\underline{\Phi}^*$ with $(\underline{\Phi}^*)^{-1}=\overline{\Phi}^*$. In particular, cup-products can be evaluated directly at the level of the Morse cochain complex $\mu^*(K)$. Indeed, given Morse cocycles $x,y\in\mu^*(K)$ representing respective cohomology classes $x',y'\in H^*(\mu^*(K))$, the cohomology cup product $x'\cdot y'$ is represented by the Morse cocycle
\begin{equation}\label{cupprod}
x\stackrel{\mu}{\smile}y:=\underline{\Phi}\left(\overline{\Phi}(x)\smile\overline{\Phi}(y)\right)\in\mu^*(K),
\end{equation}
where $\smile$ stands for the simplicial cup product.

\section{Algorithmic gradient field}\label{algorithm-field}
Let $K$ be a finite abstract ordered simplicial complex of dimension $d$ with ordered vertex set $(V,\preceq)$. Recall that the partial order $\preceq$ is required to restrict to a linear order on simplices of $K$. In this section, we describe and study an algorithm $\mathcal{A}$ that constructs a discrete gradient field $W$ (depends on~$\preceq$) on $K$. 

By the order-extension principle, we may as well assume $\preceq$ is linear from the outset. Let $\mathcal{F}^i$ denote the set of $i$-dimensional faces of $K$. Recall that a face $\alpha^{(i)}\in\mathcal{F}^i$ is identified with the ordered tuple $[\alpha_0,\alpha_1,\cdots,\alpha_i]$, $\alpha_0\prec\alpha_1\prec\cdots\prec\alpha_i$, of its vertices. In such a setting, we say that $\alpha_r$ appears in position~$r$ of $\alpha$. The ordered-tuple notation allows us to lexicographically extend $\preceq$ to a linear order (also denoted by $\preceq$) on the set $\mathcal{F}$ of faces of~$K$. We write $\prec$ for the strict version of $\preceq$.

For a vertex $v\in V$, a face $\alpha\in\mathcal{F}^i$ and an integer $r\geq0$, let 
$$\iota_r(v,\alpha)=\begin{cases}
\alpha\cup\{v\},&\mbox{if $\alpha\cup\{v\}\in\mathcal{F}^{i+1}$, with $v$ appearing in position $r$ of $\alpha\cup\{v\};$}\\\varnothing,& \mbox{otherwise.}
\end{cases}$$

\subsection{Acyclicity}
At the start of the algorithm we set $W:=\varnothing$ and initialize auxiliary variables $F^i:=\mathcal{F}^i$ for $0\leq i\leq d$ which, at any moment of the algorithm, keep track of $i$-dimensional faces not taking part of a pairing in~$W$. Throughout the algorithm $\mathcal{A}$, pairings $(\alpha,\beta)\in\mathcal{F}^i\times\mathcal{F}^{i+1}$ are added to $W$ by means of a family of processes $\mathcal{P}^i$ running for $i=d-1,d-2,\ldots,1,0$ (in that order), where $\mathcal{P}^i$ is executed provided (at the relevant moment) both $F^i$ and $F^{i+1}$ are not empty (so there is a chance to add new pairings to~$W$). Process $\mathcal{P}^i$ consists of three levels of nested subprocesses:
\begin{enumerate}
\item At the most external level, $\mathcal{P}^i$ consists of a family of processes $\mathcal{P}^{i,r}$ for $i+1\geq r\geq0$, executed in descending order with respect to $r$.
\item In turn, each $\mathcal{P}^{i,r}$ consists of a family of subprocesses $\mathcal{P}^{i,r,v}$ for $v\in V$, executed from the $\preceq$-largest vertex to the smallest one.
\item At the most inner level, each process $\mathcal{P}^{i,r,v}$ consists of a family of instructions $\mathcal{P}^{i,r,v,\alpha}$ for $\alpha\in\mathcal{F}^i$, executed following the $\preceq$-lexicographic order.
\end{enumerate}
Instruction $\mathcal{P}^{i,r,v,\alpha}$ checks whether, at the moment of its execution, $(\alpha,\iota_r(v,\alpha))\in F^i\times F^{i+1}$, i.e., whether $(\alpha,\iota_r(v,\alpha))$ is ``available'' as a new pairing. If so, the pairing $\alpha\nearrow\iota_r(v,\alpha)$ is added to $W$, while $\alpha$ and $\iota_r(v,\alpha)$ are removed from $F^i$ and $F^{i+1}$, respectively. By construction, at the end of the algorithm, the resulting family of pairs $W$ is a partial matching in $\mathcal{F}$. Furthermore, from its construction, 
\begin{equation}\label{maxim}
\mbox{\emph{all faces and cofaces of an unpaired cell are involved in a $W$-paring,}}
\end{equation}
so that $W$ is maximal. Most importantly:

\begin{proposition}\label{acyclicmatching}
$W$ is a gradient field.
\end{proposition}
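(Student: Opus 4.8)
My plan is to invoke Forman's criterion: $W$ is a gradient field precisely when the modified Hasse diagram $H_{\mathcal{F},W}$ has no directed cycle, so I aim to exclude such cycles. A directed cycle is an alternating closed path $\alpha_0\nearrow\beta_1\searrow\alpha_1\nearrow\cdots\nearrow\beta_k\searrow\alpha_k=\alpha_0$ with $\dim\alpha_j=i$ and $\dim\beta_j=i+1$ for a fixed $i$, with each pair $(\alpha_j,\beta_{j+1})\in W$ (indices modulo $k$), and with $k\geq3$ — the value $k=2$ being impossible since two distinct facets of an $(i{+}1)$-simplex already span it, which would force $\beta_1=\beta_2$ against $W$ being a matching. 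Because $W$ pairs faces of dimensions $i$ and $i+1$ only through the single process $\mathcal{P}^i$, every pair occurring in such a cycle is created during $\mathcal{P}^i$. Assign to the pair created by instruction $\mathcal{P}^{i,r,v,\alpha}$, namely $(\alpha,\iota_r(v,\alpha))$, its \emph{birth time} $(r,v,\alpha)$; these are totally ordered by the order in which $\mathcal{P}^i$ executes its instructions, i.e.\ lexicographically first by $r$ descending, then by $v$ descending in $\preceq$, then by $\alpha$ ascending in the lexicographic extension of $\preceq$. The heart of the proof is the assertion that, along any gradient path $\alpha_0\nearrow\beta_1\searrow\alpha_1\nearrow\beta_2\searrow\cdots$ whose intermediate faces $\alpha_1,\alpha_2,\dots$ are redundant, the birth times of the successive pairs $(\alpha_0,\beta_1),(\alpha_1,\beta_2),\dots$ strictly decrease. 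Granting this, a cycle would produce a strictly decreasing cyclic sequence of birth times (the return condition $\alpha_k=\alpha_0$ forces $(\alpha_k,\beta_{k+1})=(\alpha_0,\beta_1)$), an absurdity, so $W$ is acyclic.

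To prove the strict-decrease assertion it suffices to analyse a single step $\alpha\nearrow\beta\searrow\widetilde\alpha$ with $\widetilde\alpha$ redundant. Write $\beta=\alpha\cup\{v\}$ with $v$ in position $r$ of $\beta$, and $\widetilde\alpha=\beta\setminus\{u\}$ with $u\neq v$ in position $s$ of $\beta$, so that $(\alpha,\beta)$ is born at $\mathcal{P}^{i,r,v,\alpha}$ and $\iota_s(u,\widetilde\alpha)=\beta$. Let $W(\widetilde\alpha)=\widetilde\alpha\cup\{v''\}$ be born at $\mathcal{P}^{i,r'',v'',\widetilde\alpha}$; note $W(\widetilde\alpha)\neq\beta$ since $W$ is a matching. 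I want $(r'',v'',\widetilde\alpha)$ to precede $(r,v,\alpha)$ in the execution order. If $u\succ v$ (equivalently $s>r$), then $\mathcal{P}^{i,s,u}$ runs strictly before $\mathcal{P}^{i,r,v}$, hence the instruction $\mathcal{P}^{i,s,u,\widetilde\alpha}$ runs strictly before $(\alpha,\beta)$ is born; at that moment $\beta$ is still unpaired, so if $\widetilde\alpha$ were also unpaired then this instruction would pair $\widetilde\alpha$ with $\beta$, contradicting $W(\widetilde\alpha)\neq\beta$; therefore $\widetilde\alpha$ was paired even earlier, i.e.\ $(\widetilde\alpha,W(\widetilde\alpha))$ was born before $\mathcal{P}^{i,s,u,\widetilde\alpha}$, and a fortiori before $(\alpha,\beta)$.

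The remaining case $u\prec v$ (equivalently $s<r$) is the main obstacle, for now $\mathcal{P}^{i,s,u}$ runs \emph{after} $\mathcal{P}^{i,r,v}$ and the argument above collapses. The substitute relies on the behaviour of the very first subprocess $\mathcal{P}^{i,i+1}$: it pairs \emph{every} $i$-face $\gamma$ admitting a coface $\gamma\cup\{w\}$ with $w\succ\max\gamma$, because the only facet of such a coface obtained by deleting its maximal vertex is $\gamma$ itself, so — sweeping $w$ downward from the top — $\gamma$ is paired with $\gamma\cup\{w\}$ for the largest admissible $w$ with no competing instruction able to take either cell first. One then argues, by a downward induction over the subprocess index, that $\widetilde\alpha$ is forced to be paired during one of $\mathcal{P}^{i,i+1},\dots,\mathcal{P}^{i,r+1}$, or else during $\mathcal{P}^{i,r}$ via an inserted vertex $\succ v$; in all of these outcomes $(r'',v'',\widetilde\alpha)$ precedes $(r,v,\alpha)$, as required. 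Making the inductive step precise amounts to tracking exactly which cofaces of $\widetilde\alpha$ survive the successive subprocesses $\mathcal{P}^{i,i+1},\mathcal{P}^{i,i},\dots$, and it is this bookkeeping — together with verifying that the configurations in which $\widetilde\alpha$ would be paired "too late" cannot actually arise on a gradient path — where the real technical work of the proof is concentrated.
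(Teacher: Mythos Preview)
Your strategy rests on the assertion that birth times \emph{strictly decrease} along any gradient path. The case $u\succ v$ (i.e.\ $s>r$) is handled correctly and is essentially the paper's Lemma~\ref{apareaconlamenor}. But the case $u\prec v$ (i.e.\ $s<r$) is not merely incomplete bookkeeping: the monotonicity claim is \emph{false} there. Take the $2$-complex on vertices $\{1,2,3,4,5\}$ whose maximal faces are $[1,2,4]$, $[2,3,4]$, $[2,3,5]$. The algorithm produces $[2,3]\nearrow[2,3,5]$ at time $(2,5)$, then $[1,2]\nearrow[1,2,4]$ at time $(2,4)$, and later $[2,4]\nearrow[2,3,4]$ at time $(1,3)$ (the face $[2,3,4]$ survives $\mathcal{P}^{1,2,4}$ because $[2,3]$ is already taken). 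Along the gradient path
\[
[1,2]\;\nearrow\;[1,2,4]\;\searrow\;[2,4]\;\nearrow\;[2,3,4]
\]
the birth time jumps from $(2,4,[1,2])$ to the \emph{later} $(1,3,[2,4])$. So one cannot rule out cycles by a monotone-birth-time argument.

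The paper's proof sidesteps this obstacle. Instead of tracking birth times step by step, it singles out the pairing $\alpha^0\nearrow\beta^0$ in a putative cycle that is born \emph{earliest}, and then propagates a structural invariant: the ``tail'' of vertices $\beta^0_0,\alpha^0_{j_0+1},\dots,\alpha^0_k$ (those at and to the right of the insertion point in $\beta^0$) persists unchanged in every subsequent $\alpha^j$. Lemma~\ref{apareaconlamenor} forces each $\alpha^{j+1}$ to be obtained from $\beta^j$ by deleting a vertex \emph{to the left} of this tail (else $\alpha^{j+1}$ would be paired before $\alpha^0$), and the extremal choice of $\alpha^0$ forces each inserted vertex to land to the left of $\beta^0_0$ as well. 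Since $\alpha^0$ itself does not contain $\beta^0_0$, the cycle can never close. The key idea you are missing is replacing the local monotonicity (which fails) by this global invariant anchored at the earliest-born pair.
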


In preparation for the proof of Proposition~\ref{acyclicmatching}, we need:
\begin{definition}\label{parespotenciales}
Let $W_{i,r,v}$ denote the collection of pairings $\alpha\nearrow\beta$ in $W$ constructed during the process $\mathcal{P}^{i,r,v}$. Consider also the collection $P_{i,r,v}$ of pairs $(\alpha,\beta)\in\mathcal{F}^i\times\mathcal{F}^{i+1}$ such that $\beta\setminus\alpha=\{v\}$ with $v$ appearing in position $r$ of $\beta$. Thus $W_{i,r,v}=P_{i,r,v}\cap W$.
\end{definition}

We start by proving that, at the moment that $\mathcal{A}$ constructs a pairing $\alpha\nearrow\beta$, $\alpha$ is in fact the smallest (with respect to $\preceq$) of the facets of $\beta$ that remain unpaired.

\begin{lemma}\label{apareaconlamenor}
Let $\alpha=[\alpha_0,\ldots,\alpha_{r},\alpha_{r+1},\ldots,\alpha_{i}]\nearrow\beta=[\alpha_0,\ldots,\alpha_{r},\beta_0,\alpha_{r+1},\ldots,\alpha_{i}]$ be a pairing in $W_{i,r+1,\beta_0}$ and let $\gamma$ be a face of $\beta$ with $\gamma=[\alpha_0,\ldots,\alpha_{r},\beta_0,\alpha_{r+1},\ldots,\widehat{\alpha_j},\ldots,\alpha_{i}]$ for $r+1\leq j\leq i$, i.e., $\gamma\prec\alpha$. Then there is an integer $\ell\in\{j+1,j+2,\ldots,i+1\}$ and a vertex $\delta_0$ with $\alpha_j\prec\delta_0$ so that $$\gamma\nearrow\delta:=[\alpha_0,\ldots,\alpha_{r},\beta_0,\alpha_{r+1},\ldots,\alpha_{j-1},\widehat{\alpha_j},\ldots,\delta_0,\ldots]$$ lies in $W_{i,\ell,\delta_0}$. In particular, the pairing $\gamma\nearrow\delta$ is constructed by $\mathcal{A}$ before the pairing $\alpha\nearrow\beta$.
\end{lemma}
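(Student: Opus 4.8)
The plan is to examine the face $\gamma\prec\alpha$ directly and argue that the algorithm cannot leave $\gamma$ unpaired long enough for the pairing $\alpha\nearrow\beta$ to be constructed first. First I would observe that $\gamma$ is obtained from $\beta$ by deleting the vertex $\alpha_j$, which occupies some position $\ge r+1$ in $\beta$ (since $\beta_0$ sits in position $r+1$ and $\alpha_{r+1},\dots,\alpha_i$ occupy positions $r+2,\dots,i+1$, so $\alpha_j$ is in position $j+1$). Thus $\gamma$ is a facet of $\beta$ that is strictly $\preceq$-smaller than $\alpha$ in the lexicographic order, because $\gamma$ agrees with $\alpha$ up to position $r$, then has $\beta_0\prec\alpha_{r+1}$ (or more precisely $\beta_0$ in a spot where $\alpha$ has a larger vertex) — this is exactly the claim $\gamma\prec\alpha$ already recorded in the statement.

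Next I would consider what happens to $\gamma$ during the run of $\mathcal A$. At the moment process $\mathcal P^{i,r+1,\beta_0}$ reaches instruction $\mathcal P^{i,r+1,\beta_0,\alpha}$ and constructs $\alpha\nearrow\beta$, the face $\gamma$ lies in $\mathcal F^i$ and is $\preceq$-smaller than $\alpha$; by the intra-process execution order (instructions $\mathcal P^{i,r+1,\beta_0,(-)}$ run in increasing $\preceq$-lexicographic order of the $i$-face), instruction $\mathcal P^{i,r+1,\beta_0,\gamma}$ has already been executed. Since $\beta\in F^{i+1}$ at the moment $\alpha$ is paired with it, $\beta$ was still unpaired when $\mathcal P^{i,r+1,\beta_0,\gamma}$ ran, hence $\iota_{r+1}(\beta_0,\gamma)$ — if it equals $\beta$, which happens precisely when inserting $\beta_0$ back into $\gamma$ lands in position $r+1$ — would have been available, forcing $\gamma\nearrow\beta$ to be built, contradicting that $\gamma$ is still unpaired. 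Therefore $\gamma$ must already have been paired by the time $\alpha\nearrow\beta$ is constructed. It remains to identify that pairing: since $\gamma$ is unpaired after dimension-$i$ processing with higher $r'$-index would have... — more carefully, $\gamma$ was paired during some $\mathcal P^{i,\ell,\delta_0}$ with $(\ell,\delta_0)$ preceding $(r+1,\beta_0)$ in the execution order, i.e., with $\ell>r+1$, or $\ell=r+1$ and $\delta_0\succ\beta_0$; I would then show the second alternative is impossible (inserting $\beta_0$ in position $r+1$ is the unique way to enlarge $\gamma$ at position $r+1$, and that target is $\beta$, already handled), and that in the first alternative the inserted vertex $\delta_0$ must satisfy $\alpha_j\prec\delta_0$ and sit in a position $\ell\in\{j+1,\dots,i+1\}$, giving exactly the claimed form of $\delta$.

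The main obstacle I anticipate is the bookkeeping in the last step: ruling out the case $\ell=r+1$, $\delta_0\succ\beta_0$ and, more delicately, pinning down that the vertex $\delta_0$ added to $\gamma$ when it gets paired is genuinely $\succ\alpha_j$ and inserted in one of the positions $j+1,\dots,i+1$. This requires tracking which positions of $\gamma$ are ``fillable'' — $\gamma$ has the block $\alpha_0,\dots,\alpha_r,\beta_0,\alpha_{r+1},\dots,\alpha_{j-1}$ frozen (any insertion there would create a face already witnessed earlier, hence unavailable by the time we reach $\mathcal P^{i,\ell,\delta_0}$ — I'd justify this by a minimality/earlier-processing argument analogous to the $\gamma\prec\alpha$ step), so the only surviving insertion slots lie to the right of position $j$, forcing $\ell\ge j+1$ and $\delta_0\succ\alpha_{j-1}$; combined with the coface structure of $\beta$ one upgrades this to $\delta_0\succ\alpha_j$. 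Once these positional constraints are established, the ``in particular'' clause about the order in which $\mathcal A$ builds the two pairings is immediate from the $(i,\ell,\delta_0)$-before-$(i,r+1,\beta_0)$ execution order of the processes.
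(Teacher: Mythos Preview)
Your argument has a genuine gap at its pivot point. You consider the instruction $\mathcal{P}^{i,r+1,\beta_0,\gamma}$ and claim that $\iota_{r+1}(\beta_0,\gamma)$ could equal $\beta$, making $(\gamma,\beta)$ available there. But $\beta_0$ is already a vertex of $\gamma$ (indeed $\gamma=[\alpha_0,\ldots,\alpha_r,\beta_0,\alpha_{r+1},\ldots,\widehat{\alpha_j},\ldots,\alpha_i]$), so $\gamma\cup\{\beta_0\}=\gamma$ is not an $(i+1)$-face and $\iota_{r+1}(\beta_0,\gamma)=\varnothing$. Thus $\mathcal{P}^{i,r+1,\beta_0,\gamma}$ never attempts the pair $(\gamma,\beta)$, and this step gives you no information at all about whether $\gamma$ has already been matched. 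Everything downstream collapses: you have not shown $\gamma$ is paired before $\alpha$, and even if you had, comparing against the marker $(r+1,\beta_0)$ would only yield $\ell>r+1$ or $(\ell=r+1,\ \delta_0\succ\beta_0)$, which is far short of the needed $\ell\ge j+1$ and $\delta_0\succ\alpha_j$. The ``bookkeeping'' you flag as an obstacle is not bookkeeping---it is the entire content of the lemma, and your sketch of how to handle it (``frozen block'') is not an argument.

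The paper's proof avoids all of this with one clean observation: the instruction to look at is $\mathcal{P}^{i,\,j+1,\,\alpha_j,\,\gamma}$, not $\mathcal{P}^{i,r+1,\beta_0,\gamma}$. Inserting $\alpha_j$ back into $\gamma$ is well defined and lands precisely in position $j+1$, recovering $\beta$; since $\beta$ survives to be paired with $\alpha$ during the strictly later process $\mathcal{P}^{i,r+1,\beta_0}$ (recall $j+1>r+1$), $\beta$ was available when $\mathcal{P}^{i,j+1,\alpha_j,\gamma}$ ran, yet $(\gamma,\beta)\notin W$. Hence $\gamma$ was already matched by some $\mathcal{P}^{i,\ell,\delta_0,\gamma}$ executed \emph{before} $\mathcal{P}^{i,j+1,\alpha_j,\gamma}$, which forces $\ell>j+1$, or $\ell=j+1$ with $\delta_0\succ\alpha_j$; in the former case $\delta_0$ exceeds the vertex in position $\ell-1\ge j+1$ of $\gamma$, namely $\alpha_{\ell-1}\succ\alpha_j$. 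Either way $\ell\in\{j+1,\ldots,i+1\}$ and $\alpha_j\prec\delta_0$, which is exactly the conclusion.
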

\begin{proof}
Previous to the instruction $\mathcal{P}^{i,r+1,\beta_0,\alpha}$ that constructs $\alpha\nearrow\beta$, the algorithm $\mathcal{A}$ executes the instruction $\mathcal{P}^{i,j+1,\alpha_j,\gamma}$ that evaluates the potential pair $(\gamma,\beta)\in P_{i,j+1,\alpha_j}$. The latter is not an element of $W$, as $\beta$ reaches a later stage in $\mathcal{A}$. So $\gamma$ must be paired by an instruction $\mathcal{P}^{i,\ell,\delta_0,\gamma}$ previous to $\mathcal{P}^{i,j+1,\alpha_j,\gamma}$, which forces the conclusion.
\end{proof}

\begin{proof}[Proof of Proposition~\ref{acyclicmatching}]
Assume for a contradiction that there is a $W$-cycle
\begin{equation}\label{cicloelegido}
\alpha^0\nearrow\beta^0\searrow\alpha^1\nearrow\beta^1\searrow\alpha^2\nearrow\cdots\nearrow\beta^n\searrow\alpha^{n+1}=\alpha^0
\end{equation}
(the condition $n\geq1$ is forced by the definition of a gradient path). Without loss of generality, we can assume that $\alpha^0\nearrow\beta^0$ is constructed by $\mathcal{A}$ before any other pairing $\alpha^j\nearrow\beta^j$ with $1\leq j\leq n$. So, Lemma~\ref{apareaconlamenor} forces the start of the cycle to have the form
\begin{align*}
\alpha^0=[\hspace{.2mm}\alpha^0_0,.\dots\dots\dots\dots\dots\dots,&\hspace{.4mm}\alpha^0_{j_0+1},\ldots,\alpha^0_k],\\
\beta^0=[\alpha^0_0,\ldots\dots\dots\ldots\ldots,\alpha^0_{j_0},&\,\beta^0_0,\alpha^0_{j_0+1},\ldots,\alpha^0_k],\\
\alpha^1=[{\alpha^0_0,\hspace{.1mm}.\ldots,\widehat{\alpha^0_\ell},\ldots,\hspace{.1mm}\alpha^0_{j_0}},\beta_0^0,&\hspace{.4mm}\alpha^0_{j_0+1},\ldots,\alpha^0_k].
\end{align*}
Assume inductively $\alpha^j=[\,\cdots,\beta^0_0,\alpha^0_{j_0+1},\ldots,\alpha^0_k]$ with $\beta_0^0$ appearing in position $j_0$ (so $\alpha^j\neq\alpha^0$). The choosing of $\alpha^0\nearrow\beta^0$ implies that $\beta^j$ is obtained from $\alpha^j$ by inserting a vertex $v$ on the left of $\beta^0_0$ (i.e., $v<\beta^0_0$). A new application of Lemma~\ref{apareaconlamenor} (together with the choosing of $\alpha^0\nearrow\beta^0$) then shows that $\alpha^{j+1}$ must be obtained from $\beta^j$ by removing a vertex other than $\beta^0_0,\alpha^0_{j_0+1},\ldots,\alpha_k^0$. Thus $\alpha^{j+1}=[\cdots,\beta_0^0,\alpha^0_{j_0+1},\ldots,\alpha^0_k]$, which is again different from $\alpha_0$. Iterating, we get a situation incompatible with the equality in~(\ref{cicloelegido}).
\end{proof}

We have noted that, when $K$ is a full simplex, $\mathcal{A}$ constructs the standard (and optimal) gradient field determined by inclusion-exclusion of a fixed vertex (the largest one in the selected order $\preceq$). Optimality is reached in many other standard situations.

\begin{examples}\label{projectiveandtorus}{\em
Figure~\ref{RP2} gives a triangulation of the projective plane $\mathbb{R}P^2$. The gradient field shown by the heavy arrows is determined by $\mathcal{A}$ using the indicated ordering of vertices. The only critical faces are $[6]$ (in dimension 0), $[2,5]$ (in dimension~1) and $[1,3,4]$ (in dimension 2), so optimality of the field follows from the known mod-2 homology of $\mathbb{R}P^2$. Although the gradient field depends on the ordering of vertices, we have verified with the help of a computer that, in this case, all possible 720 gradient fields (coming from the corresponding $6!$ possible orderings of vertices) are optimal. A corresponding optimal gradient field on the 2-torus (and the vertex-order rendering it) is shown in Figure~\ref{gradienttorus}. This time the critical faces are $[9]$ (in dimension 0), $[2,8]$ and $[5,8]$ (in dimension 1) and $[1,3,7]$ (in dimension 2). The torus case is interesting in that there are vertex orderings that yield non-optimal gradient fields. In general, a plausible strategy for choosing a convenient ordering of vertices consists on assuring the largest possible number of vertices with high $\preceq$-tag so that no two such vertices lie on a common face. For instance, in our torus example, no pair of vertices taken from 7, 8 and 9 lie on a single face. 
}\end{examples}

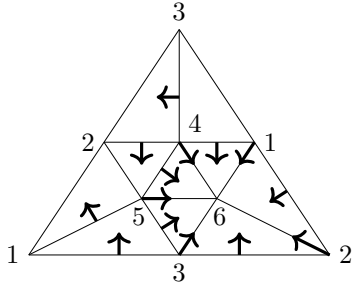
\begin{figure}
\centering
\begin{tikzpicture}[x=1cm,y=1cm]
\node[left] at (0,0) {1};\node[left] at (1,1.5) {2};\node[above] at (2,3) {3};
\node[right] at (3,1.5) {1};\node[right] at (4,0) {2};\node[below] at (2,0) {3};
\node[right] at (2,1.75) {4};\node[below] at (1.46,.75) {5};\node[below] at (2.54,.75) {6};
\draw[very thin](0,0)--(4,0);\draw[very thin](0,0)--(2,3);\draw[very thin](4,0)--(2,3);
\draw[very thin](1,1.5)--(3,1.5);\draw[very thin](1,1.5)--(2,0);\draw[very thin](3,1.5)--(2,0);
\draw[very thin](2,1.5)--(2,3);
\draw[very thin](0,0)--(1.5,.75);
\draw[very thin](4,0)--(2.5,.75);
\draw[very thin](2,1.5)--(1.5,.75);
\draw[very thin](2,1.5)--(2.5,.75);
\draw[very thin](1.5,.75)--(2.5,.75);
\draw[->,very thick](1.2,0)--(1.2,.3);
\draw[->,very thick](2.8,0)--(2.8,.3);
\draw[->,very thick](.9,.45)--(.75,.7);
\draw[->,very thick](2,0)--(2.2,.3);
\draw[->,very thick](4,0)--(3.5,.25);
\draw[->,very thick](3,1.5)--(2.8,1.2);
\draw[->,very thick](2,1.5)--(2.2,1.2);
\draw[->,very thick](1.5,.75)--(1.9,.75);
\draw[->,very thick](3.43,.85)--(3.2,.7);
\draw[->,very thick](2.5,1.5)--(2.5,1.2);
\draw[->,very thick](1.5,1.5)--(1.5,1.2);
\draw[->,very thick](2,2.1)--(1.7,2.1);
\draw[->,very thick](1.76,.35)--(2,.5);
\draw[->,very thick](1.76,1.15)--(2,.97);
\end{tikzpicture}
\caption{Algorithmic gradient field in the projective plane}
\label{RP2}
\end{figure}

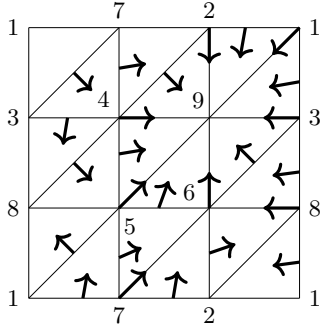
\begin{figure}
\centering
\begin{tikzpicture}[x=1.2cm,y=1.2cm]
\draw[very thin](0,0)--(3,0);\draw[very thin](0,0)--(0,3);
\draw[very thin](0,1)--(3,1);\draw[very thin](1,0)--(1,3);
\draw[very thin](0,2)--(3,2);\draw[very thin](2,0)--(2,3);
\draw[very thin](0,3)--(3,3);\draw[very thin](3,0)--(3,3);
\draw[very thin](0,0)--(3,3);\draw[very thin](0,1)--(2,3);
\draw[very thin](0,2)--(1,3);\draw[very thin](1,0)--(3,2);
\draw[very thin](2,0)--(3,1);
\node[left] at (0,0) {\small 1};\node[left] at (0,3) {\small 1};
\node[right] at (3,0) {\small 1};\node[right] at (3,3) {\small 1};
\node[above] at (2,3) {\small 2};\node[below] at (2,0) {\small 2};
\node[above] at (1,3) {\small 7};\node[below] at (1,0) {\small 7};
\node[left] at (0,2) {\small 3};\node[right] at (3,2) {\small 3};
\node[left] at (0,1) {\small 8};\node[right] at (3,1) {\small 8};
\node[right] at (.95,.8) {\small 5};
\node[left] at (1.95,1.17) {\small 6};
\node[left] at (2.05,2.19) {\small 9};
\node[left] at (1,2.2) {\small 4};
\draw[->,very thick](3,3)--(2.7,2.7);
\draw[->,very thick](2,3)--(2,2.6);
\draw[->,very thick](3,2)--(2.6,2);
\draw[->,very thick](1,2)--(1.4,2);
\draw[->,very thick](1,1)--(1.3,1.3);
\draw[->,very thick](2,1)--(2,1.4);
\draw[->,very thick](1,0)--(1.3,.3);
\draw[->,very thick](3,1)--(2.6,1);
\draw[->,very thick](2.4,3)--(2.34,2.66);
\draw[->,very thick](3,2.4)--(2.66,2.34);
\draw[->,very thick](.5,.5)--(.3,.7);
\draw[->,very thick](.6,0)--(.65,.28);
\draw[->,very thick](3,.4)--(2.7,.36);
\draw[->,very thick](1.5,2.5)--(1.7,2.3);
\draw[->,very thick](2,.5)--(2.29,.6);
\draw[->,very thick](1.6,0)--(1.65,.3);
\draw[->,very thick](.43,2)--(.38,1.7);
\draw[->,very thick](2.5,1.5)--(2.3,1.7);
\draw[->,very thick](.5,2.5)--(.7,2.3);
\draw[->,very thick](3,1.4)--(2.7,1.36);
\draw[->,very thick](1,1.6)--(1.3,1.65);
\draw[->,very thick](1,2.55)--(1.3,2.6);
\draw[->,very thick](.5,1.5)--(.7,1.3);
\draw[->,very thick](1.44,1)--(1.55,1.3);
\draw[->,very thick](1,.45)--(1.25,.56);

\end{tikzpicture}
\caption{Algorithmic gradient field in the 2-torus}
\label{gradienttorus}
\end{figure}

The option $\alpha\prec\gamma$ ruled out by the hypotheses in Lemma~\ref{apareaconlamenor} is addressed in: 
\begin{lemma}\label{apareaconlamayor}
Let $\alpha=[\alpha_0,\ldots,\alpha_i,\alpha_{i+1},\ldots,\alpha_k]\nearrow\beta=[\alpha_0,\ldots,\alpha_i,\beta_0,\alpha_{i+1},\ldots,\alpha_k]$ lie in $W_{k,i+1,\beta_0}$ and let $\gamma$ be a face of $\beta$ with $\alpha\prec\gamma$, i.e., $\gamma=[\alpha_0,\ldots,\widehat{\alpha_j},\ldots,\alpha_i,\beta_0,\alpha_{i+1},\ldots,\alpha_k]$ for $0\leq j\leq i$. Assume $\gamma\nearrow\delta$ is a pairing constructed \emph{after} the pairing $\alpha\nearrow\beta$. Then $\delta$ is obtained from~$\gamma$ by inserting a vertex $\delta_0$ which is $\prec$-smaller than $\beta_0$, i.e., $\delta=(\ldots,\delta_0,\ldots,\beta_0,\alpha_{i+1},\ldots,\alpha_k)$.
\end{lemma}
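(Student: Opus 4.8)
The plan is to compare directly the moments at which the algorithm $\mathcal{A}$ issues the two pairings involved. Writing $\delta=\gamma\cup\{\delta_0\}$ and letting $\ell$ denote the position of $\delta_0$ in $\delta$, the pairing $\gamma\nearrow\delta$ lies in $W_{k,\ell,\delta_0}$ and is created by the instruction $\mathcal{P}^{k,\ell,\delta_0,\gamma}$, whereas $\alpha\nearrow\beta\in W_{k,i+1,\beta_0}$ is created by $\mathcal{P}^{k,i+1,\beta_0,\alpha}$; in particular both pairings are issued during the same top-level process $\mathcal{P}^k$, since a $k$-face is matched with a $(k+1)$-coface only there. Recalling that $\mathcal{P}^k$ runs its subprocesses $\mathcal{P}^{k,r}$ in \emph{decreasing} order of $r$, then, within a fixed $r$, the subprocesses $\mathcal{P}^{k,r,v}$ in \emph{decreasing} $\preceq$-order of $v$, and then, within a fixed pair $(r,v)$, the instructions $\mathcal{P}^{k,r,v,\sigma}$ over $\sigma\in\mathcal{F}^k$ in \emph{increasing} $\preceq$-lexicographic order, the hypothesis that $\gamma\nearrow\delta$ is built after $\alpha\nearrow\beta$ translates precisely into: either (i) $\ell\leq i$; or (ii) $\ell=i+1$ and $\delta_0\prec\beta_0$; or (iii) $\ell=i+1$, $\delta_0=\beta_0$ and $\alpha\prec\gamma$.

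The second step is to discard (ii) and (iii) and then read off the conclusion from (i). Since $\beta_0$ is not a vertex of $\alpha=\{\alpha_0,\dots,\alpha_k\}$, it is a vertex of $\gamma=\beta\setminus\{\alpha_j\}$, while $\delta_0$ is not a vertex of $\gamma$ (as $\delta$ is genuinely $(k+1)$-dimensional); hence $\delta_0\neq\beta_0$ and (iii) cannot occur. For (ii): deleting $\alpha_j$, which occupies position $j\leq i$ of $\beta$, shows that $\beta_0$ occupies position $i$ of $\gamma$; but if $\delta_0$ were inserted in position $i+1$ of $\delta=\gamma\cup\{\delta_0\}$, then the $i+1$ smallest vertices of $\gamma$---those in positions $0,\dots,i$, and in particular $\beta_0$---would all be $\prec\delta_0$, contradicting $\delta_0\prec\beta_0$. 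Thus $\ell\leq i$, so $\delta_0$ is inserted strictly to the left of the position of $\beta_0$ in $\delta$; this immediately gives $\delta_0\prec\beta_0$ and leaves the tail $\beta_0,\alpha_{i+1},\dots,\alpha_k$ of $\gamma$ untouched in $\delta$, i.e.\ $\delta=(\dots,\delta_0,\dots,\beta_0,\alpha_{i+1},\dots,\alpha_k)$, as claimed.

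The only delicate point---and the one I would set up most carefully---is the faithful translation between the nested-loop structure of $\mathcal{A}$ and the induced order on triples $(r,v,\sigma)$, in particular keeping straight the direction of each comparison (position index descending, inserted vertex descending, base face ascending), together with the position bookkeeping in $\gamma$ and $\delta$ after the deletion of $\alpha_j$ and the insertion of $\delta_0$. Once those are pinned down, the argument is a short case analysis resting only on Definition~\ref{parespotenciales} and the description of~$\mathcal{A}$.
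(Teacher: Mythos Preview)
Your proof is correct and follows essentially the same approach as the paper's: a direct comparison of the execution times of the two instructions in $\mathcal{A}$, using the position bookkeeping in $\gamma$ to rule out the cases $\ell=i+1$. The paper compresses this to a one-line remark (noting that $\alpha_{i+1}$ sits in position $i+1$ of $\gamma$, which is equivalent to your observation that $\beta_0$ sits in position $i$), whereas you have spelled out the three-way case split explicitly; the underlying argument is the same.
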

\begin{proof}
The assertion follows from the definition of the algorithm $\mathcal{A}$ noticing that $\alpha_{i+1}$ appears in position $i+1$ in $\gamma$.
\end{proof}

\subsection{Gradient field via a faster algorithm}
The proof of Proposition~\ref{acyclicmatching} makes critical use of ``timing'' in the construction of $W$-pairs whithin the algorithm~$\mathcal{A}$. Such a characteristic will be modified next in order to get a more efficient and faster version of $\mathcal{A}$. While timing of $W$-pairs construction will be altered, we shall show that the new algorithm constructs the same gradient field.

The algorithm $\overline{\mathcal{A}}$ in this subsection, initialized with auxiliary variables $\overline{W}$ and $\overline{F}^i$ analogous to those for its counterpart $\mathcal{A}$, consists of a family of processes $\overline{\mathcal{P}}^i$ running for $i=d-1,d-2,\ldots,1,0$ (in that order). Each $\overline{\mathcal{P}}^i$ is executed under the same conditions (with respect to $\overline{F}^i$ and $\overline{F}^{i+1}$) as its analogue $\mathcal{P}^i$, but consists only of two (rather than three) levels of nested subprocess. Namely, at the most external level, $\overline{\mathcal{P}}^i$ consists of a family of processes $\overline{\mathcal{P}}^{i,v}$ for $v\in V$, executed from the $\preceq$-largest vertex to the smallest one. In turn, each process $\overline{\mathcal{P}}^{i,v}$ consists of a family of instructions $\overline{\mathcal{P}}^{i,v,\alpha}$ for $\alpha\in\mathcal{F}^i$, executed following the $\preceq$-lexicographic order. Instruction $\overline{\mathcal{P}}^{i,v,\alpha}$ checks whether, at that moment, $(\alpha,\{v\}\cup\alpha))\in\overline{F}^i\times\overline{F}^{i+1}$ (i.e., availability). If so, the pairing $\alpha\nearrow \{v\}\cup\alpha$ is added to $\overline{W}$, while $\alpha$ and $\{v\}\cup\alpha$ are removed from $\overline{F}^i$ and $\overline{F}^{i+1}$, respectively. Thus, the difference with the algorithm $\mathcal{A}$ is that, in order to construct a pairing $\alpha\nearrow \{v\}\cup\alpha$ in $\overline{W}$, we do not care about the position of $v$ in $\{v\}\cup\alpha$. As we will explain next, such a situation means that algorithm~$\overline{\mathcal{A}}$ constructs some gradient pairings $\alpha\nearrow\beta$ earlier than they would be constructed by $\mathcal{A}$, thus avoiding the need to perform subsequent testing instructions related to $\alpha$ or $\beta$.

\begin{example}\label{pinched}{\em
Consider the triangulation of the punctured projective plane shown in Figure~\ref{ponchado}. In the algorithm~$\mathcal{A}$, the pairing $[2,3]\nearrow[2,3,4]$, which is constructed during the process $\mathcal{P}^{1,2,4}$, comes before the pairing $[1,5]\nearrow[1,4,5]$, which is constructed during the process $\mathcal{P}^{1,1,4}$. Instead, these two pairings arise in the opposite order in the algorithm $\overline{\mathcal{A}}$, and they both are constructed during the process~$\mathcal{Q}^{1,4}$. As the reader can easily check, the (common) resulting gradient field has only two critical faces, namely $[6]$ and $[4,5]$, and is thus optimal (for the punctured projective plane has the homotopy type of the circle~$S^1$).
}\end{example}

\begin{figure}
\centering
\begin{tikzpicture}[x=1cm,y=1cm]
\node[left] at (0,0) {2};\node[left] at (1,1.5) {3};\node[above] at (2,3) {6};
\node[right] at (3,1.5) {2};\node[right] at (4,0) {3};\node[below] at (2,0) {6};
\node[right] at (2,1.75) {5};\node[below] at (1.46,.75) {4};\node[below] at (2.54,.75) {1};
\draw[very thin](0,0)--(4,0);\draw[very thin](0,0)--(2,3);\draw[very thin](4,0)--(2,3);
\draw[very thin](1,1.5)--(3,1.5);\draw[very thin](1,1.5)--(2,0);\draw[very thin](3,1.5)--(2,0);
\draw[very thin](2,1.5)--(2,3);
\draw[very thin](0,0)--(1.5,.75);
\draw[very thin](4,0)--(2.5,.75);
\draw[very thin](2,1.5)--(1.5,.75);
\draw[very thin](2,1.5)--(2.5,.75);
\draw[very thin](1.5,.75)--(2.5,.75);
\coordinate (Origin) at (0,0);
\filldraw [thick, fill=gray] ($(Origin)+(3.96,.04)$) -- ++(-1.44,.72) -- ++(.48,.72) -- cycle;
\end{tikzpicture}
\caption{Vertex order in the projective plane with facet $[1,2,3]$ is removed}
\label{ponchado}
\end{figure}
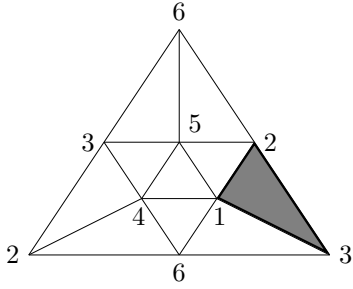
 
The goal of this subsection is to prove Theorem~\ref{mismoscampos} below, i.e., the fact that $W=\overline{W}$ at the end of both algorithms. The proof is best organized by setting $\overline{W}_{i,r,v}:=P_{i,r,v}\cap\overline{W}$ (cf.~Definition~\ref{parespotenciales}), as well as
$$W_{k,v}=\bigsqcup_r W_{k,r,v}\ \ \mbox{ and } \ \ \overline{W}_{k,v}=\bigsqcup_r\overline{W}_{k,r,v}.$$

\begin{theorem}\label{mismoscampos}
The pairings constructed by $\mathcal{A}$ and $\overline{\mathcal{A}}$ agree: $W_{k,r,v}=\overline{W}_{k,r,v}$ for all relevant indices $k$, $r$ and $v$. In particular, $\overline{W}$ is acyclic.
\end{theorem}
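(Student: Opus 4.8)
The plan is to exhibit both $\mathcal{A}$ and $\overline{\mathcal{A}}$ as instances of one and the same greedy matching routine run on the Hasse diagram $H_{\mathcal{F}}$, differing only in the order in which a common list of candidate edges is scanned, and then to show that this difference is immaterial because the two orders rank \emph{competing} candidates in the same way. Concretely, let $\mathcal{C}$ be the set of all directed Hasse edges $\beta^{(i+1)}\searrow\alpha^{(i)}$ ($i\geq 0$), and call two of them \emph{incompatible} when they share an endpoint; since the endpoints of such an edge are one $i$-face and one $(i+1)$-face, incompatibility means having a common $i$-face or a common $(i+1)$-face. Any total order on $\mathcal{C}$ yields a greedy matching: scan $\mathcal{C}$ in that order, adjoining an edge to the current matching whenever it is incompatible with none of the edges already chosen. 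The first thing I would verify is that this is literally what each algorithm does. For $\mathcal{A}$ the test ``$(\alpha,\iota_r(v,\alpha))\in F^i\times F^{i+1}$'' is precisely the statement that $\alpha\nearrow\iota_r(v,\alpha)$ is incompatible with all previously constructed pairings, and each edge $\beta^{(i+1)}\searrow\alpha^{(i)}$ of $\mathcal{C}$ is tested exactly once, namely at the instruction $\mathcal{P}^{i,r,v,\alpha}$ with $\alpha\cup\{v\}=\beta$ and $v$ in position $r$ of $\beta$ (for the other values of $r$ one has $\iota_r(v,\alpha)=\varnothing$). For $\overline{\mathcal{A}}$ the same holds, the edge $\beta^{(i+1)}\searrow\alpha^{(i)}$ being tested once, at $\overline{\mathcal{P}}^{i,v,\alpha}$ with $v=\beta\setminus\alpha$. (That scanning all of $\mathcal{C}$ greedily reproduces the dimensionwise bookkeeping with $F^i$ and $\overline{F}^i$ is immediate, because a level-$i$ pairing removes faces of dimensions $i$ and $i+1$ only, and incompatibility records exactly this.)

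Next I would isolate the elementary fact that makes the two scanning orders interchangeable. Orient the incompatibility graph on $\mathcal{C}$ by directing each incompatible pair from its earlier to its later member in the chosen order; this digraph $D$ is acyclic, being induced by a total order, and the matching $M$ produced by the greedy run is the \emph{unique} subset of $\mathcal{C}$ satisfying ``$e\in M$ if and only if no $e'$ with $e'\to e$ in $D$ lies in $M$''. Existence of such an $M$ is witnessed by the greedy run; uniqueness holds because two distinct solutions $M$ and $M'$ would have to differ on an edge $e$ that is minimal, for $D$, among those of $M\,\triangle\,M'$, and a short check (any in-neighbor of such an $e$ that lies in one of $M,M'$ must lie in both) rules this out. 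Since this characterization of $M$ only sees the orientation $D$, two total orders on $\mathcal{C}$ inducing the same orientation produce the same greedy matching. The theorem is thus reduced to showing that the scanning order built into $\mathcal{A}$ and the one built into $\overline{\mathcal{A}}$ rank every incompatible pair the same way.

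This comparison is the technical core. Both algorithms execute the dimension loop $i=d-1,\ldots,1,0$, so two incompatible edges of different levels are ranked by level in both, in agreement. Two incompatible edges of the same level $i$ either share their $(i+1)$-face or share their $i$-face. Suppose they share $\beta^{(i+1)}=[b_0\prec\cdots\prec b_{i+1}]$, so they are $\beta\searrow(\beta\setminus\{b_{k_1}\})$ and $\beta\searrow(\beta\setminus\{b_{k_2}\})$; to obtain $\beta$ from the $s$-th facet one inserts $b_{k_s}$, which occupies position $k_s$ of $\beta$, and $k_1>k_2\iff b_{k_1}\succ b_{k_2}$. Hence $\mathcal{A}$, which sorts first by insertion position (descending), and $\overline{\mathcal{A}}$, which sorts by inserted vertex (descending), visit these two edges in the same order — in both cases the one deleting the larger vertex, i.e.\ the $\preceq$-lexicographically smaller facet, comes first, consistently with Lemma~\ref{apareaconlamenor}. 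Now suppose instead they share $\alpha^{(i)}=[a_0\prec\cdots\prec a_i]$, so they are $(\alpha\cup\{v_1\})\searrow\alpha$ and $(\alpha\cup\{v_2\})\searrow\alpha$ with, say, $v_1\succ v_2$. The insertion position of $v_s$ into $\alpha$ is $r_s=|\{j:a_j\prec v_s\}|$, and $v_1\succ v_2$ forces $r_1\geq r_2$. Therefore in $\mathcal{A}$ the edge with $v_1$ is reached first — strictly earlier if $r_1>r_2$, via the outer (descending) position loop, and otherwise at the common position $r_1=r_2$ but earlier via the middle (descending) vertex loop — which is exactly the verdict of $\overline{\mathcal{A}}$, where it comes first because $v_1\succ v_2$. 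This establishes that the two orders induce the same orientation of the incompatibility graph.

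Putting the pieces together, $\mathcal{A}$ and $\overline{\mathcal{A}}$ build the same matching, so $W=\overline{W}$. Moreover a pairing $\alpha\nearrow\beta$ with $\beta\setminus\alpha=\{v\}$ and $v$ in position $r$ of $\beta$ is constructed by the process $\mathcal{P}^{i,r,v}$, hence lies in $W_{i,r,v}$, and once it lies in $\overline{W}$ it lies in $\overline{W}_{i,r,v}=P_{i,r,v}\cap\overline{W}$ by definition; with $W=\overline{W}$ this yields $W_{i,r,v}=\overline{W}_{i,r,v}$ for all relevant $i$, $r$, $v$, and $\overline{W}$ inherits acyclicity from $W$ via Proposition~\ref{acyclicmatching}. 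The obstacle I anticipate lies entirely in the same-level comparison above: one must be certain that $\mathcal{A}$'s two-key sort (position, then vertex, both descending) and $\overline{\mathcal{A}}$'s single-key sort (vertex, descending) never disagree on an incompatible pair, and this rests on the monotonicity ``$v_1\succ v_2\Rightarrow r_1\geq r_2$'' for a shared facet together with the identification of the insertion position with the vertex position for a shared coface. Everything else is formal.
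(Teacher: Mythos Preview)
Your argument is correct, and it takes a genuinely different route from the paper's own proof. The paper proceeds by a nested induction: an outer descending induction on the dimension $i$, then (for $r=i+1$) a descending induction on vertices comparing the algorithms step by step (Cases I--III), and finally a further double descending induction on $(r,v)$ to show that the remaining processes $\mathcal{P}^{i,r,v}$ in $\mathcal{A}$ ``catch up'' with the early pairings made by $\overline{\mathcal{A}}$ (Case IV). Each step is a direct comparison of availability at the corresponding moments of the two runs, driven by the elementary monotonicity observations~(\ref{elementales}).

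Your approach abstracts all of this into a single structural principle: a greedy maximal matching depends only on the orientation of the incompatibility graph induced by the scanning order, and the two scans agree on that orientation. The key verifications --- that for two candidate edges sharing a coface the position and the inserted vertex are ordered the same way, and that for two candidate edges sharing a facet the implication $v_1\succ v_2\Rightarrow r_1\geq r_2$ holds --- are precisely the content of~(\ref{elementales}), so you are using the same elementary facts but packaging them far more efficiently. What your approach buys is brevity, conceptual clarity, and generality: the same argument would immediately cover any other rescanning of the candidate edges that agrees with $\mathcal{A}$ on incompatible pairs. What the paper's inductive approach buys is a more explicit picture of the \emph{timing} discrepancy between the two algorithms (which pairings $\overline{\mathcal{A}}$ constructs early and how $\mathcal{A}$ eventually recovers them), information that may be useful when analyzing concrete gradient fields by hand.
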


The proof of Theorem~\ref{mismoscampos} makes use of the following elementary observations holding for vertices $v$ and $w$ with $v\preceq w$:
\begin{align}\label{elementales}
\begin{gathered}
(\alpha,\beta)\in P_{k,r,v}\mbox{ \ and \ } (\alpha,\gamma)\in P_{k,s,w} \implies r\leq s, \mbox{ with equality provided $v=w$.}\\
(\alpha,\beta)\in P_{k,r,v}\mbox{ \ and \ } (\gamma,\beta)\in P_{k,s,w} \implies r\leq s, \mbox{ with equality provided $v=w$.}
\end{gathered}
\end{align}
Additionally, it will be convenient to keep in mind the following closer view of the medular part of algorithms $\mathcal{A}$ and~$\overline{\mathcal{A}}$. Namely, in the case of $\mathcal{A}$, an efficient way to execute a process $\mathcal{P}^{k,r,v}$ is by assembling the set $N_{k,r,v}$ of $(k+1)$-dimensional faces $\gamma$ having $v$ in position $r$ and so that both $\gamma$ and $\partial_v(\gamma)$ ---the latter being the face of~$\gamma$ obtained by removing $v$--- are available, i.e., neither $\gamma$ nor $\partial_v(\gamma)$ have been paired previous to the start of $\mathcal{P}^{k,r,v}$. With such a preparation, $\mathcal{P}^{k,r,v}$ simply adds\footnote{The adding of pairs is done following the $\preceq$-lexicographic order (cf.~Lemma~\ref{lexi}), thought this much is immaterial at this point.} to $W$ all pairs $(\partial_v(\gamma),\gamma)$ with $\gamma\in N_{k,r,v}$ (construction of new pairings), and removes all faces $\gamma$ and $\partial_v(\gamma)$, for $\gamma\in N_{k,r,v}$, from the corresponding lists of unpaired faces (update of available faces). Likewise, an efficient way to execute process $\overline{\mathcal{P}}^{k,v}$ in $\overline{\mathcal{A}}$ is by assembling the set $\overline{N}_{k,v}$ of $(k+1)$-dimensional faces $\gamma$ containing $v$ as a vertex (in any position) and such that both $\gamma$ and $\partial_v(\gamma)$ are available at the start of $\overline{\mathcal{P}}^{k,v}$. With such a preparation, Lemma~\ref{lexi} below shows that $\overline{\mathcal{P}}^{k,v}$ simply adds to $\overline{W}$ (in lexicographic order) all pairings $(\partial_v(\gamma),\gamma)$ with $\gamma\in\overline{N}_{k,v}$ (construction of new pairings), and removes all faces $\gamma$ and $\partial_v(\gamma)$, for $\gamma\in \overline{N}_{k,v}$, from the corresponding lists of unpaired faces (update of available faces). In particular, $N_{k,r,v}$ (respectively $\overline{N}_{k,v}$) is the set of collapsible faces for the block of pairing constructed by $\mathcal{P}^{k,r,v}$ (respectively $\overline{\mathcal{P}}^{k,v}$), while the faces $\partial_v(\beta)$ for $\beta\in N_{k,r,v}$ (respectively $\beta\in\overline{N}_{k,v}$) are the corresponding redundant faces.

\begin{lemma}\label{lexi}
Let $\alpha$ and $\beta$ be $(k+1)$-dimensional faces each containing $v$ as a vertex (in any position). In terms of the $\preceq$-lexicographic order, the condition $\alpha\prec\beta$ holds if and only if $\partial_v(\alpha)\prec\partial_v(\beta)$.
\end{lemma}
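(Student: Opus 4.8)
The statement is equivalent to saying that, restricted to the set of $(k+1)$-dimensional faces of $K$ that contain $v$, the operation $\partial_v$ is an order isomorphism onto its image, where both the domain and the image carry the $\preceq$-lexicographic order inherited from $\mathcal{F}$. Now $\partial_v$ is a bijection onto its image, with two-sided inverse the insertion map $A\mapsto A\cup\{v\}$, and an order-preserving bijection between two linearly ordered sets is automatically an order isomorphism (if one had $f(p)\prec f(q)$ while $q\preceq p$, then $f(q)\preceq f(p)$, a contradiction; hence $f^{-1}$ is order-preserving too). So the plan is to reduce the lemma to the following purely combinatorial claim about $\preceq$-increasing tuples of vertices of $K$: if $A=[a_0,\dots,a_n]$ and $B=[b_0,\dots,b_n]$ have the same length, $v$ lies in neither, and $A\prec B$, then $A\cup\{v\}\prec B\cup\{v\}$, the unions being understood as $\preceq$-sorted tuples. (This general claim clearly covers the restriction to faces.)

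I would prove this claim by induction on the common length of $A$ and $B$, the case of empty tuples being vacuous. Since $A\prec B$ we have $a_0\preceq b_0$, and I distinguish cases. If $a_0\prec b_0$ and $v\succ a_0$, then $A\cup\{v\}$ still begins with $a_0$ while $B\cup\{v\}$ begins with $\min(v,b_0)\succ a_0$, so the comparison is already decided at the first coordinate. If $a_0\prec b_0$ and $v\prec a_0$ (so also $v\prec b_0$), then $A\cup\{v\}$ and $B\cup\{v\}$ both begin with $v$ and are followed by $A$ and $B$ respectively, so the claim follows from $A\prec B$. Finally, if $a_0=b_0=:w$, let $A'$ and $B'$ be the tails of $A$ and $B$; then $A\prec B$ forces $A'\prec B'$. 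If $v\prec w$, both $A\cup\{v\}$ and $B\cup\{v\}$ begin with $v$ followed by $A$ and $B$, and again we are done; if $v\succ w$, both begin with $w$ followed by $A'\cup\{v\}$ and $B'\cup\{v\}$, and the inductive hypothesis applied to the shorter tuples $A'\prec B'$ yields $A'\cup\{v\}\prec B'\cup\{v\}$.

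There is no substantive obstacle here; the only point that needs a moment of care is recording where $v$ is inserted in each tuple. The reason the argument works is that inserting $v$ only shifts the entries lying $\preceq$-above it, so the first coordinate at which $A\cup\{v\}$ and $B\cup\{v\}$ disagree is the first coordinate at which $A$ and $B$ disagree, possibly shifted by one (precisely when $v$ precedes that coordinate in both tuples), and the comparison made there is unchanged. Once this is noticed, the finite case check above is immediate, and the soft reduction of the first paragraph then delivers the stated equivalence $\alpha\prec\beta\iff\partial_v(\alpha)\prec\partial_v(\beta)$.
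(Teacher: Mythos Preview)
Your proof is correct. Both your argument and the paper's begin with the same reduction: since the lexicographic order is linear and $\partial_v$ is a bijection onto its image, it suffices to show one of the two maps is order-preserving. From there the two arguments diverge. The paper shows directly that $\partial_v$ preserves order, by a positional case analysis: letting $v$ sit in positions $i$ and $j$ of $\alpha$ and $\beta$, it disposes of the easy cases $i=j$ and ``lex decision made before $\min(i,j)$'', and then checks the one remaining configuration by hand. You instead prove that the \emph{inverse} map (insertion of $v$) preserves order, arguing by induction on the tuple length and casing on the first entries. Your route is slightly more general (it is phrased for arbitrary sorted tuples, not just faces) and has the clean recursive structure of peeling off a common first entry; the paper's route is shorter and avoids setting up an induction, at the cost of a small positional computation. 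Either way the content is the same elementary observation you summarize at the end: inserting $v$ shifts only the entries above it, so the first place of disagreement and the comparison made there are preserved.
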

\begin{proof}
We provide proof details for completeness. The lexicographic order is linear (we have assumed so at the vertex level), so it suffices to show that $\partial_v(\alpha)\prec\partial_v(\beta)$ provided $\alpha\prec\beta$. Say $v$ appears in positions $i$ and $j$ in $\alpha$ and $\beta$, respectively. The result is obvious if $i=j$ (this is why we did not need the lemma in our closer look at $\mathcal{A}$), or if the lexicographic decision for the inequality $\alpha\prec\beta$ is taken at a position smaller than $m:=\min\{i,j\}$. Thus, we can assume $i\neq j$ with $\alpha$ and $\beta$ being identical up to position $m-1$. The inequality $\alpha\prec\beta$ then forces $i>j=m$. Thus $\partial_v(\alpha)$ and $\partial_v(\beta)$ are identical up to position $j-1$, while in position $j$:
\begin{itemize}
\item $\partial_v(\alpha)$ has the vertex $\alpha_j$, which is smaller than $v=\alpha_i$, and
\item $\partial_v(\beta)$ has the vertex $\beta_{j+1}$, which is larger than $v=\beta_j$.
\end{itemize}
Consequently $\partial_v(\alpha)\prec\partial_v(\beta)$.
\end{proof}

\begin{proof}[Proof of Theorem~\ref{mismoscampos}]
Recall $d$ stands for the dimension of the simplicial complex under consideration. Fix $i\in\{0,1,\ldots,d-1\}$ and assume
\begin{equation}\label{externalinduction}
\mbox{the equality $W_{k,r,v}=\overline{W}_{k,r,v}$ is valid whenever $k>i$}
\end{equation}
for all relevant values of $r$ and $v$. The inductive goal is to prove
\begin{equation}\label{items}
W_{i,r,v}=\overline{W}_{i,r,v}, \mbox{ for all $v\in V$ and all $r\in\{0,1,\ldots,i+1\}$}.
\end{equation}
(The induction is vacuously grounded by the fact that $W_{d,r,v}=\varnothing=\overline{W}_{d,r,v}$ at the start of both algorithms.) We start by arguing the case $r=i+1$ in~(\ref{items}) which, in turn, will be done by induction on the reverse ordering of vertices (i.e., starting from the largest vertex $v_{\max}$) and through a comparison of the corresponding actions of $\mathcal{A}$ and $\overline{\mathcal{A}}$ during \emph{simultaneous} execution of these algorithms. In detail:

\smallskip{\bf Case I: $r=i+1$ and $v=v_{\max}$ in~(\ref{items})}. Pairings in $W_{i,i+1,v_{\max}}$ are constructed during the execution of process $\mathcal{P}^{i,i+1,v_{\max}}$, while those in $\overline{W}_{i,i+1,v_{\max}}$ are constructed during the execution of process $\overline{\mathcal{P}}^{i,v_{\max}}$. In principle, the latter process would also construct pairings outside $\overline{W}_{i,i+1,v_{\max}}$. However, such a possibility is prevented from the fact that $v_{\max}$ can only appear in the last position of any face. Taking into account the inductive assumption~(\ref{externalinduction}), this means that processes $\mathcal{P}^{i,i+1,v_{\max}}$ in $\mathcal{A}$ and $\overline{\mathcal{P}}^{i,v_{\max}}$ in $\overline{\mathcal{A}}$ construct the same new pairings and, consequently, perform the same updating of sets of available faces (this justifies the abuse of notation $\mathcal{P}^{i,i+1,v_{\max}}=\overline{\mathcal{P}}^{i,v_{\max}}$). Furthermore, after these processes conclude, no further pairings can be constructed by insertion of $v_{\max}$ (either in $\mathcal{A}$ or in $\overline{\mathcal{A}}$). Thus in fact
\begin{equation}\label{bloque1}
W_{i,v_{\max}}=W_{i,i+1,v_{\max}}=\overline{W}_{i,i+1,v_{\max}}=\overline{W}_{i,v_{\max}}
\end{equation}
which, in particular, grounds the inductive (on the vertices) argument for the case $r=i+1$ in~(\ref{items}). As explained in the paragraph preceding Lemma~\ref{lexi}, the redundant entries in~(\ref{bloque1}) are the $i$-dimensional faces $\alpha$ such that $\alpha\cup\{v_{\max}\}$ is an $(i+1)$-dimensional face (so $v_{\max}\not\in\alpha$) available at the start of process $\mathcal{P}^{i,i+1,v_{\max}}=\overline{\mathcal{P}}^{i,v_{\max}}$ (all $i$-dimensional faces are available at this point), whereas the collapsible entries in~(\ref{bloque1}) are the $(i+1)$-dimensional faces available at the start of $\mathcal{P}^{i,i+1,v_{\max}}=\overline{\mathcal{P}}^{i,v_{\max}}$ that contain $v_{\max}$ as a vertex.

The above situation changes slightly in later stages of the algorithms and, in order to better appreciate subtleties, it is highly illustrative to spend a little time analyzing in detail a few of the pairings constructed right after~(\ref{bloque1}).

\smallskip{\bf Case II: $r=i+1$ and $v=v_{\max-1}$ in~(\ref{items})}. Let $v_1,v_2,\ldots,v_{\max-1},v_{\max}$ be the elements of the vertex set $V$ listed increasingly according to $\preceq$. Pairings in $W_{i,i+1,v_{\max-1}}$ (respectively $\overline{W}_{i,i+1,v_{\max-1}}$) are constructed during the execution of the process $\mathcal{P}^{i,i+1,v_{\max-1}}$ (respectively $\overline{\mathcal{P}}^{i,v_{\max-1}}$). In both processes, the construction is done by considering insertion of $v_{\max-1}$ among available faces (these are common to both algorithms up to this point), either in position $i+1$ in the case of $\mathcal{A}$, or in any position in the case of $\overline{\mathcal{A}}$. As in Case I,
\begin{equation}\label{posibilidad}
\mbox{$\overline{\mathcal{P}}^{i,v_{\max-1}}$ might construct pairings outside $\overline{W}_{i,i+1,v_{\max-1}}$,}
\end{equation}
and
\begin{equation}\label{nexttolast}
\mbox{any such a pairing would have to lie in $\overline{W}_{i,i,v_{\max-1}}$,}
\end{equation}
as $v_{\max-1}$ cannot appear in a position smaller than~$i$ in an $(i+1)$-dimensional face. In terms of the notation introduced in the paragraph previous to Lemma~\ref{lexi}, the possibility in~(\ref{posibilidad}) translates into a strict inclusion $N_{i,i+1,v_{\max-1}}\subset\overline{N}_{i,v_{\max-1}}$. However an element in $\overline{N}_{i,v_{\max-1}}\setminus N_{i,i+1,v_{\max-1}}$ is forced to be an $(i+1)$-dimensional face which, in addition to being available at the start of $\mathcal{P}^{i,i+1,v_{\max-1}}$ and $\overline{\mathcal{P}}^{i,v_{\max-1}}$, has $v_{\max}$ appearing in the last position (for, as indicated in~(\ref{nexttolast}), $v_{\max-1}$ appears in the next-to-last position). Such a situation conflicts with the description of collapsible faces noted at the end of Case I, ruling out the possibility in~(\ref{posibilidad}). Thus, as above, $\mathcal{P}^{i,i+1,v_{\max-1}}=\overline{\mathcal{P}}^{i,v_{\max-1}}$ and 
\begin{equation}\label{bloque2}
W_{i,v_{\max-1}}=W_{i,i+1,v_{\max-1}}=\overline{W}_{i,i+1,v_{\max-1}}=\overline{W}_{i,v_{\max-1}}.
\end{equation}

While Cases I and II are essentially identical, the construction of subsequent pairings has a twist whose solution is better appreciated by taking a quick glance at the next block of pairings, i.e., those constructed by $\mathcal{P}^{i,i+1,v_{\max-2}}$ in the case of $\mathcal{A}$ and by $\overline{\mathcal{P}}^{i,v_{\max-2}}$ in the case of $\overline{\mathcal{A}}$. Namely, this time the inclusion
\begin{equation}\label{twist}
N_{i,i+1,v_{\max-2}}\subseteq\overline{N}_{i,v_{\max-2}}
\end{equation}
may actually fail to be an equality, as illustrated in Example~\ref{pinched}. As a result, the particularly strong form of assertions~(\ref{bloque1}) and~(\ref{bloque2}) no longer holds true for subsequent blocks of pairings. In any case, what we do recover from~(\ref{twist}) ---and the discussion previous to Lemma~\ref{lexi}--- is the fact that $W_{i,i+1,v_{\max-2}}=\overline{W}_{i,i+1,v_{\max-2}}$. We next extend inductively the latter conclusion to other vertices and, then, explain how early pairings constructed in $\overline{\mathcal{A}}$ are eventually recovered in $\mathcal{A}$. 

\smallskip{\bf Case III: Inductive step settling~(\ref{items}) for $r=i+1$}. Fix a vertex $v\in V$ and assume
\begin{equation}\label{pasind}
\overline{W}_{i,i+1,w}=W_{i,i+1,w}
\end{equation}
whenever $v\prec w$, allowing the possibility that process $\overline{\mathcal{P}}^{i,w}$ in $\overline{\mathcal{A}}$ constructs more pairings than those constructed by the corresponding process $\mathcal{P}^{i,i+1,w}$ in $\mathcal{A}$. In such a setting, faces available at the start of $\overline{\mathcal{P}}^{i,v}$ are necessarily available at the start of $\mathcal{P}^{i,i+1,v}$, so
\begin{equation}\label{wsiguales}
\overline{W}_{i,i+1,v}\subseteq W_{i,i+1,v}.
\end{equation}
Assume for a contradiction that the latter inclusion is strict, and pick a pairing
\begin{equation}\label{pick}
\mbox{$(\alpha,\beta)$ in $W_{i,i+1,v}$ not in $\overline{W}_{i,i+1,v}$.}
\end{equation}
This means that $\alpha$ or $\beta$ (or both) are not available at the start of $\overline{\mathcal{P}}^{i,v}$ and, in view of~(\ref{externalinduction}), this can only happen provided either
\begin{enumerate}[(i)]
\item $(\alpha,\beta')\in \overline{W}_{i,r,w}\subseteq P_{i,r,w}$ for some face $\beta'$, some vertex $w$ and some position $r$, or 
\item $(\alpha',\beta)\in \overline{W}_{i,r,w}\subseteq P_{i,r,w}$ for some face $\alpha'$, some vertex $w$ and some position $r$,
\end{enumerate}
where, in either case, $v\prec w$ and $r\leq i+1$. But $(\alpha,\beta)\in W_{i,i+1,v}\subseteq P_{i,i+1,v}$, so~(\ref{elementales}) yields in fact $i+1=r$. Thus $\alpha$ or $\beta$ is part of a pairing in $\overline{W}_{i,r,w}=\overline{W}_{i,i+1,w}=W_{i,i+1,w}$, where the latter equality comes from~(\ref{pasind}) but contradicts~(\ref{pick}). Thus~(\ref{wsiguales}) is an equality. Note that the above argument does not rule out the possibility that $\overline{\mathcal{P}}^{i,v}$ constructs more pairings (by inserting $v$ at a position smaller than $i+1$) than those constructed by $\mathcal{P}^{i,i+1,v}$.

The conclusion of the proof of Theorem~\ref{mismoscampos}, i.e., the proof of~(\ref{items}) for $r\leq i$, proceeds by (inverse) induction on $r$, with the above discussion for $r=i+1$ grounding the induction. The new inductive argument requires an entirely different viewpoint coming from the following fact: In $\mathcal{A}$, after $\mathcal{P}^{i,i+1,v_1}$ is over, process $\mathcal{P}^i$ continues with many more subprocesses, the first of which is $\mathcal{P}^{i,i,v_{\max}}$. Yet, in $\overline{\mathcal{A}}$, process $\overline{\mathcal{P}}^i$ finishes as soon $\overline{\mathcal{P}}^{i,v_1}$ is over, i.e., when the final inductive stage in Case III concludes. Therefore, our proof strategy from this point on requires pausing $\overline{\mathcal{A}}$ in order to analyze the rest of the actions in $\mathcal{P}^i$. In particular, we explain next how $\mathcal{P}^i$ catches up with all the ``early'' pairings $\bigcup_v\left(\overline{W}_{i,v}\setminus W_{i,i+1,v}\right)$ constructed by $\overline{\mathcal{P}}^i$.

\smallskip{\bf Case IV: (Double) inductive step settling~(\ref{items}) for any $r$}. Fix $r\in\{0,1,\ldots,i\}$ and assume inductively that, as $\mathcal{P}^i$ progresses, $\mathcal{P}^{i,\rho}$ yields $\overline{W}_{i,\rho,w}=W_{i,\rho,w}$ for any vertex $w$ and any position of insertion $\rho>r$. (The induction is grounded by Case III above.) The goal is to prove
\begin{equation}\label{dobleindext}
\mbox{$\overline{W}_{i,r,w}=W_{i,r,w}$ for all vertices $w$.}
\end{equation}
Since $r\leq i$, we get $\overline{W}_{i,r,v_{\max}}=\varnothing=W_{i,r,v_{\max}}$. We can therefore assume in a second inductive level that, for some vertex $v$ with $v\prec v_{\max}$,~(\ref{dobleindext}) holds true for all vertices $w$ with $v\prec w$. The updated goal is to prove $\overline{W}_{i,r,v}=W_{i,r,v}$.

\smallskip\emph{Inclusion $\overline{W}_{i,r,v}\subseteq W_{i,r,v}\hspace{.3mm}$}: Suppose for a contradiction that
\begin{equation}\label{contr1}
(\alpha,\beta)\in\overline{W}_{i,r,v}
\end{equation}
is an ``early'' pairing (constructed during the execution of $\overline{\mathcal{P}}^{i,v}$) that cannot be constructed during the execution of $\mathcal{P}^{i,r,v}$. Then $\alpha$ or $\beta$ (or both) must be involved as a pairing of some $W_{i,s,w}$ with $s\geq r$ and, in addition, with $v\prec w$ if in fact $s=r$. The double inductive equality $W_{i,s,w}=\overline{W}_{i,s,w}$, the dynamics of $\overline{\mathcal{A}}$ and~(\ref{contr1}) then force $v=w$ and, consequently, $s>r$. But the latter inequality contradicts~(\ref{elementales}) since $\overline{W}_{i,r,v}\subseteq P_{i,r,v}$ and $\overline{W}_{i,s,w}\subseteq P_{i,s,w}$.

\smallskip\emph{Inclusion $W_{i,r,v}\subseteq\overline{W}_{i,r,v}\hspace{.3mm}$}: Suppose for a contradiction that
\begin{equation}\label{contr2}
(\alpha,\beta)\in W_{i,r,v}
\end{equation}
is not one of the ``early'' pairings constructed during the execution of $\overline{\mathcal{P}}^{i,v}$. Then $\alpha$ or $\beta$ (or both) must be involved in a pairing of some $\overline{W}_{i,s,w}$ with $v\prec w$. As in the previous paragraph, (\ref{elementales}) then yields $r\leq s$. In turn, the double inductive hypothesis gives $\overline{W}_{i,s,w}=W_{i,s,w}$, which thus contains a pairing involving $\alpha$ or $\beta$, in contradiction to~(\ref{contr2}).
\end{proof}

Despite $\overline{\mathcal{A}}$ is faster than $\mathcal{A}$, in general there is a high computational cost involved in running either of the two algorithms. So, rather than estimating computational complexity issues, in the next section we describe concrete ``local'' conditions that allow us to identify gradient pairings. In a number of instances\footnote{This holds, for instance, in the case of the projective plane and the torus in Examples~\ref{projectiveandtorus}, as well as in the application to spaces of ordered pairs of points on complete graphs in Subsecci\'on~\ref{aplimunk}.}, the conditions determine in full the gradient field.

\subsection{Collapsibility conditions}\label{colacond}
In this section we identify a set of conditions implying collapsibility of a given face. As we shall see later in the paper, this determines in full the gradient field constructed by $\mathcal{A}$ in the case of Munkres' combinatorial model for a configuration space of ordered pairs of points (Section~\ref{munkres-model}) on a complete graph. The main result (Theorem~\ref{extended}) is presented through a series of preliminary complexity-increasing results in order to isolate the role of each of the condition ingredients.

\begin{definition}\label{closing-vertex}
A vertex $\alpha_i$ of a face $\alpha=[\alpha_0,\ldots,\alpha_k]\in\mathcal{F}^k$ is said to be maximal in $\alpha$ if $\partial_{\alpha_i}(\alpha)\cup\{v\}\notin\mathcal{F}^{k}$ for all vertices $v$ with $\alpha_i\prec v$. When $\alpha_i$ is non-maximal in $\alpha$, we write $\alpha(i):=\partial_{\alpha_i}(\alpha)\cup\{\alpha^i\}$, where
$$
\alpha^i:=\max\{v\in V \colon \alpha_i\prec v \mbox{ and }\hspace{.3mm}\partial_{\alpha_i}(\alpha)\cup\{v\}\in\mathcal{F}^k\}.
$$
Note that $\alpha^i$ is maximal in $\alpha(i)$, and that $\alpha^i$ is not a vertex of $\alpha$. Iterating the construction, for a given $\alpha=[\alpha_0,\ldots,\alpha_k]\in\mathcal{F}^k$ and a sequence of integers $0\leq i_1<i_2<\cdots<i_p\leq k$, we say that the ordered vertices $\alpha_{i_1},\alpha_{i_2},\ldots,\alpha_{i_p}$ are non-maximal in $\alpha$ provided:
\begin{itemize}
\item $\alpha_{i_1}$ is non-maximal in $\alpha$, so we can form the face $\alpha(i_1)$;
\item $\alpha_{i_2}$ is non-maximal in $\alpha(i_1)$, so we can form the face $\alpha(i_1,i_2):=\alpha(i_1)(i_2)$;
\item \ldots
\item $\alpha_{i_p}$ is non-maximal in $\alpha(i_1,\ldots,i_{p-1})$, so we can form the face $\alpha(i_1,\ldots,i_p):=\alpha(i_1,\ldots,i_{p-1})(i_p)$.
\end{itemize}
When $p=0$ (so there is no constructing process), $\alpha(i_1,i_2,\ldots,i_p)$ is interpreted as $\alpha$.
\end{definition}

\begin{lemma}\label{noredundant}
No vertex of a redundant $k$-face $\alpha\in\mathcal{F}^k$ is maximal in $\alpha$.
\end{lemma}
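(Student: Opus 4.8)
The plan is to exploit the pairing witnessing the redundancy of $\alpha$, and to invoke Lemma~\ref{apareaconlamenor} after a case split on where the extra vertex of that pairing sits. Since $\alpha\in\mathcal{F}^k$ is redundant, there is a pairing $\alpha\nearrow\beta$ in $W$ with $\beta=\alpha\cup\{v\}$ for a (necessarily unique) vertex $v\notin\alpha$; say $v$ occupies position $q$ in $\beta$, so that $(\alpha,\beta)\in P_{k,q,v}$ and the pairing lies in $W_{k,q,v}$. Writing $\alpha=[\alpha_0,\ldots,\alpha_k]$, the vertices $\alpha_0,\ldots,\alpha_{q-1}$ are precisely those $\prec v$ and $\alpha_q,\ldots,\alpha_k$ are precisely those $\succ v$. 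I will show that every vertex $\alpha_i$ of $\alpha$ is non-maximal in $\alpha$.

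If $i<q$ (so $\alpha_i\prec v$), the argument is immediate: $\partial_{\alpha_i}(\alpha)\cup\{v\}$ equals the facet $\partial_{\alpha_i}(\beta)$ of $\beta$, hence lies in $\mathcal{F}^k$, and $v\succ\alpha_i$, so $\alpha_i$ is not maximal in $\alpha$. The substantive case is $i\geq q$ (so $\alpha_i\succ v$). Here I would consider $\gamma:=\partial_{\alpha_i}(\beta)=\partial_{\alpha_i}(\alpha)\cup\{v\}$. Since $\alpha_i$ occurs strictly after $v$ in $\beta$, the face $\gamma$ has exactly the shape required by Lemma~\ref{apareaconlamenor} (a facet of $\beta$ obtained by deleting a vertex placed after the inserted vertex $v$), and in particular $\gamma\prec\alpha$. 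That lemma then provides a vertex $u$ with $\alpha_i\prec u$ and a pairing $\gamma\nearrow\gamma\cup\{u\}$ in $W$; note that $u\notin\gamma$, hence $u\neq v$.

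To finish, observe that $\partial_{\alpha_i}(\alpha)\cup\{u\}=(\gamma\cup\{u\})\setminus\{v\}$ is a facet of the $(k+1)$-face $\gamma\cup\{u\}$, hence belongs to $\mathcal{F}^k$; together with $\alpha_i\prec u$ this shows $\alpha_i$ is not maximal in $\alpha$, completing the proof. I expect the only delicate point to be the bookkeeping that places $\gamma$ under the hypotheses of Lemma~\ref{apareaconlamenor}: one must check that $\gamma$ arises from $\beta$ by removing a vertex lying \emph{after} $v$ (this is exactly why the split at $q$ is made), and that $v\notin\alpha$ legitimizes the set-theoretic identities $\partial_{\alpha_i}(\beta)=\partial_{\alpha_i}(\alpha)\cup\{v\}$ and $(\gamma\cup\{u\})\setminus\{v\}=\partial_{\alpha_i}(\alpha)\cup\{u\}$. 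Beyond this indexing check there is no genuine obstacle; Lemma~\ref{apareaconlamenor} carries the essential content.
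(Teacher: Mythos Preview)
Your proof is correct and follows essentially the same approach as the paper: split on whether the vertex $\alpha_i$ lies before or after the inserted vertex, handle the first case directly via the facet $\partial_{\alpha_i}(\beta)$, and in the second case invoke Lemma~\ref{apareaconlamenor} to produce a vertex $u\succ\alpha_i$ so that the facet $(\gamma\cup\{u\})\setminus\{v\}=\partial_{\alpha_i}(\alpha)\cup\{u\}$ witnesses non-maximality. The only cosmetic difference is notation (the paper writes $\beta_0$ for your $v$, $r$ for your $q$, and $\partial_{\beta_0}(\delta)$ for your $(\gamma\cup\{u\})\setminus\{v\}$).
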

\begin{proof}
Assume a pairing $\alpha=[\alpha_0,\ldots,\alpha_{k}]\nearrow\beta=[\alpha_0,\ldots,\alpha_{r-1},\beta_0,\alpha_r,\ldots,\alpha_{k}]$ and consider a vertex $\alpha_i$ of $\alpha$. If $i<r$, the $k$-face $\partial_{\alpha_i}(\beta)=[\alpha_0,\ldots,\widehat{\alpha_i},\ldots,\alpha_{r-1},\beta_0,\alpha_r,\ldots,\alpha_{k}]$ shows that $\alpha_i$ is non-maximal in $\alpha$. If $i\geq r$, Lemma~\ref{apareaconlamenor} gives a pairing
$$
\gamma:=[\alpha_0,\ldots,\alpha_{r-1},\beta_0,\alpha_r,\ldots,\widehat{\alpha_i},\ldots,\alpha_{k}]\nearrow [\alpha_0,\ldots,\alpha_{r-1},\beta_0,\alpha_r,\ldots,\widehat{\alpha_i},\ldots,\delta_0,\ldots]=:\delta
$$
by insertion of a vertex $\delta_0$ with $\alpha_i\prec\delta_0$, so that the $k$-face $\partial_{\beta_0}(\delta)$ shows that $\alpha_i$ is non-maximal in~$\alpha$.
\end{proof}

While maximal vertices in a face $\alpha$ can be thought of as giving obstructions for redundancy of~$\alpha$, maximality of the largest vertex in $\alpha$ is actually equivalent to collapsibility of $\alpha$ in a specific way:

\begin{corollary}\label{collapsible}
The following conditions are equivalent for a $k$-face $\alpha=[\alpha_0,\ldots,\alpha_k]\in\mathcal{F}^k:$
\begin{enumerate}[(1)]
\item\label{maxbis} $\alpha_k$ is maximal in $\alpha$.
\item\label{maxmax} $\partial_{\alpha_k}(\alpha)\nearrow\alpha$.
\end{enumerate}
\end{corollary}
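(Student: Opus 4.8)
The plan is to establish the two implications $(\ref{maxbis})\Rightarrow(\ref{maxmax})$ and $(\ref{maxmax})\Rightarrow(\ref{maxbis})$ separately. All the real work sits in the first one; once it is in hand the second follows almost formally by bootstrapping off of it together with the fact that $W$ is a matching.

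For $(\ref{maxbis})\Rightarrow(\ref{maxmax})$, assume $\alpha_k$ is maximal in $\alpha$. Since $\alpha_k$ is the $\preceq$-largest vertex of $\alpha$ it occupies position $k$, so $\iota_k\big(\alpha_k,\partial_{\alpha_k}(\alpha)\big)=\alpha$ and the potential pairing $\partial_{\alpha_k}(\alpha)\nearrow\alpha$ lies in $P_{k-1,k,\alpha_k}$; thus it suffices to show that the instruction $\mathcal{P}^{k-1,k,\alpha_k,\partial_{\alpha_k}(\alpha)}$ actually adds it to $W$, i.e.\ that $\partial_{\alpha_k}(\alpha)$ and $\alpha$ are both still unpaired when that instruction runs. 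First I would observe that the $(k-1)$-face $\partial_{\alpha_k}(\alpha)$ is untouched by every process prior to $\mathcal{P}^{k-1}$, and within $\mathcal{P}^{k-1}$ can be affected, before the instruction in question, only as a redundant face of some subprocess $\mathcal{P}^{k-1,k,w}$ with $w\succ\alpha_k$; but a pairing there would require $\partial_{\alpha_k}(\alpha)\cup\{w\}\in\mathcal{F}^k$ with $w$ landing in position $k$, which is exactly what maximality of $\alpha_k$ forbids. Next, $\alpha$ is not a redundant face: by Lemma~\ref{noredundant} a redundant $k$-face has no maximal vertex, so $\mathcal{P}^k$ does not pair $\alpha$; and $\alpha$ cannot be paired as a collapsible face before the instruction $\mathcal{P}^{k-1,k,\alpha_k,\partial_{\alpha_k}(\alpha)}$, since any such pairing $\partial_{\alpha_j}(\alpha)\nearrow\alpha$ belongs to $P_{k-1,j,\alpha_j}$ with either $j<k$ (constructed only after all of $\mathcal{P}^{k-1,k}$) or $j=k$ (in which case it \emph{is} the instruction under consideration). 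In the language of the discussion preceding Lemma~\ref{lexi} this says $\alpha\in N_{k-1,k,\alpha_k}$, so $\mathcal{P}^{k-1,k,\alpha_k}$ constructs $\partial_{\alpha_k}(\alpha)\nearrow\alpha$.

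For $(\ref{maxmax})\Rightarrow(\ref{maxbis})$, suppose $\partial_{\alpha_k}(\alpha)\nearrow\alpha\in W$ but, for a contradiction, $\alpha_k$ is not maximal in $\alpha$. In the notation of Definition~\ref{closing-vertex} set $v:=\alpha^k$ and $\beta:=\alpha(k)=\partial_{\alpha_k}(\alpha)\cup\{v\}$, so that $v\succ\alpha_k$, $\beta\in\mathcal{F}^k$, $v$ is the $\preceq$-largest vertex of $\beta$, $\partial_v(\beta)=\partial_{\alpha_k}(\alpha)$, and $v$ is maximal in $\beta$ (this is the remark recorded in Definition~\ref{closing-vertex}, immediate from the defining maximality of $\alpha^k$). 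Applying the already-proven implication $(\ref{maxbis})\Rightarrow(\ref{maxmax})$ to $\beta$ yields $\partial_{\alpha_k}(\alpha)=\partial_v(\beta)\nearrow\beta\in W$. Since $v\ne\alpha_k$ we have $\beta\ne\alpha$, so $\partial_{\alpha_k}(\alpha)$ would be the redundant face of two distinct pairings of $W$, contradicting that $W$ is a matching; hence $\alpha_k$ is maximal in $\alpha$.

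I expect the only genuinely delicate step to be the bookkeeping in $(\ref{maxbis})\Rightarrow(\ref{maxmax})$ — ruling out, process by process and instruction by instruction, every way $\alpha$ or $\partial_{\alpha_k}(\alpha)$ could have been paired before the intended instruction fires — where Lemma~\ref{noredundant} blocks $\alpha$ from being redundant and the maximality hypothesis blocks all interfering insertions of vertices $w\succ\alpha_k$. With that settled, the converse is essentially free.
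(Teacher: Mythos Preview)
Your proof is correct and follows essentially the same approach as the paper: for $(\ref{maxbis})\Rightarrow(\ref{maxmax})$ you argue availability of $\alpha$ via Lemma~\ref{noredundant} and availability of $\partial_{\alpha_k}(\alpha)$ via the maximality hypothesis, and for the converse you bootstrap by applying the forward direction to $\beta=\alpha(k)$ (with $v=\alpha^k$ maximal in $\beta$) to obtain a second pairing at $\partial_{\alpha_k}(\alpha)$, contradicting that $W$ is a matching. Your write-up is more explicit about the process-by-process bookkeeping, but the underlying argument is the same as the paper's.
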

\begin{proof}
Assuming~\emph{(\ref{maxbis})}, both $\alpha$ and $\partial_{\alpha_k}(\alpha)$ are available at the start of process $\mathcal{P}^{k-1,k,\alpha_k}$; the former face in view of Lemma~\ref{noredundant}, and the latter face by the maximality hypothesis. The $W$-pairing in~\emph{(\ref{maxmax})} is therefore constructed by the process $\mathcal{P}^{k-1,k,\alpha_k}$. On the other hand, if~\emph{(\ref{maxbis})} fails, there is a vertex $v$ of $K$ which is maximal with respect to the conditions $\alpha_k\prec v$ and $\partial_{\alpha_k}(\alpha)\cup\{v\}\in\mathcal{F}^k$. As $v$ is maximal in $\partial_{\alpha_k}(\alpha)\cup\{v\}=[\alpha_0,\ldots,\alpha_{k-1},v]$, the argument in the previous paragraph gives $[\alpha_0,\ldots,\alpha_{k-1}]\nearrow [\alpha_0,\ldots,\alpha_{k-1},v]$, thus ruling out the $W$-pairing in~\emph{(\ref{maxmax})}.
\end{proof}

Under additional restrictions (spelled out in~(\ref{additionalrestr}) below), maximality of other vertices also forces collapsibility in a specific way. We start with the case of the next-to-last vertex, where the additional restrictions are simple, yet the if-and-only-if situation in Corollary~\ref{collapsible} is lost (see Remark~\ref{noanalogue} below).

\begin{proposition}\label{startofgeneral-collapsible}
Let $\alpha=[\alpha_0,\ldots,\alpha_k]\in\mathcal{F}^k$. If $\alpha_{k-1}$ is maximal in $\alpha$ but $\alpha_k$ is not, then $\partial_{\alpha_{k-1}}(\alpha)\nearrow\alpha$.
\end{proposition}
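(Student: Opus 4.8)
The plan is to reduce the statement to a direct inspection of the process $\mathcal{P}^{k-1,k,\alpha_{k-1}}$ in the algorithm $\mathcal{A}$, showing that at the moment this process runs, both $\alpha$ and $\partial_{\alpha_{k-1}}(\alpha)$ are still available, so that the instruction $\mathcal{P}^{k-1,k,\alpha_{k-1},\,\partial_{\alpha_{k-1}}(\alpha)}$ constructs exactly the pairing $\partial_{\alpha_{k-1}}(\alpha)\nearrow\alpha$. First I would note that, since $\alpha_{k-1}$ is maximal in $\alpha$, the face $\partial_{\alpha_{k-1}}(\alpha)\cup\{\alpha_{k-1}\}=\alpha$ is a $k$-face with $\alpha_{k-1}$ appearing in position $k-1$; equivalently $\iota_{k}(\alpha_{k-1},\partial_{\alpha_{k-1}}(\alpha)) $ is not $\alpha$ but the potential pair $(\partial_{\alpha_{k-1}}(\alpha),\alpha)$ lies in $P_{k-1,k,\alpha_{k-1}}$ (here one must be careful about the position bookkeeping: removing $\alpha_{k-1}$ and the fact that $\alpha_{k-1}$ is maximal means $\alpha_k$ is the only vertex of $\alpha$ larger than $\alpha_{k-1}$, so $\alpha_{k-1}$ sits in position $k-1$, i.e. the index $r$ in Definition~\ref{parespotenciales} is $k$ since positions are counted from $0$ and $\alpha_{k-1}$ is the $(k-1)$-st entry — I would double-check this indexing against the conventions of Lemma~\ref{apareaconlamenor}).

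Next I would establish the two availability claims. \emph{Availability of $\alpha$}: the hypothesis that $\alpha_k$ is \emph{not} maximal in $\alpha$ means, by Corollary~\ref{collapsible}, that $\partial_{\alpha_k}(\alpha)\nearrow\alpha$ does \emph{not} hold; more usefully, I would argue $\alpha$ cannot have been made redundant before $\mathcal{P}^{k-1,k,\alpha_{k-1}}$ runs, using Lemma~\ref{noredundant} together with the fact that $\alpha_{k-1}$ is maximal in $\alpha$ — if $\alpha$ were redundant, no vertex of $\alpha$ could be maximal in $\alpha$, contradicting the hypothesis. And $\alpha$ cannot have been made collapsible earlier either: a pairing $\partial_v(\alpha)\nearrow\alpha$ constructed by some process $\mathcal{P}^{k-1,s,v}$ preceding $\mathcal{P}^{k-1,k,\alpha_{k-1}}$ would require $v$ to appear in a position $s\geq k$ in $\alpha$ with $v$ strictly larger than $\alpha_{k-1}$ (since that process runs earlier), forcing $s=k$ and $v$ to be the unique vertex of $\alpha$ in position $k$, namely $\alpha_k$; then $\alpha_k$ would be maximal in $\alpha$ by Corollary~\ref{collapsible}\emph{(\ref{maxbis})}$\Leftrightarrow$\emph{(\ref{maxmax})}, contradicting the hypothesis. \emph{Availability of $\partial_{\alpha_{k-1}}(\alpha)$}: this is the $(k-1)$-face $[\alpha_0,\ldots,\alpha_{k-2},\alpha_k]$; I would show it has not been paired before $\mathcal{P}^{k-1,k,\alpha_{k-1}}$, again splitting into "redundant earlier" and "collapsible earlier". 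If it became collapsible earlier, some vertex larger than $\alpha_{k-1}$ would have been inserted into it; the only candidate in a common $k$-face situation interacts with the maximality of $\alpha_{k-1}$ in $\alpha$, and I would use exactly that maximality to exclude it. If it became redundant earlier, it would have been paired up with some $k$-face $\delta$ by inserting a vertex $w$; I would argue via Lemma~\ref{apareaconlamenor}/Lemma~\ref{apareaconlamayor} and the position of $\alpha_k$ in $\partial_{\alpha_{k-1}}(\alpha)$ that such a $\delta$ would compete with $\alpha$ in a way ruled out by the availability of $\alpha$ just established — more precisely, I expect the clean route is: if $\partial_{\alpha_{k-1}}(\alpha)$ were redundant it would have been paired in some $W_{k-1,t,w}$ with $w$ inserted; since $\partial_{\alpha_{k-1}}(\alpha)\prec\alpha$ (as $\alpha_k$ replaces $\alpha_{k-1}$ only by deletion), apply Lemma~\ref{apareaconlamayor} with roles appropriately matched to see that the inserted vertex must be $\prec$-smaller than $\alpha_{k-1}$, hence this happens in a process $\mathcal{P}^{k-1,t,w}$ with $w\prec \alpha_{k-1}$, i.e. strictly \emph{after} $\mathcal{P}^{k-1,k,\alpha_{k-1}}$ — contradiction.

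The main obstacle I anticipate is the bookkeeping in the second availability claim: showing $\partial_{\alpha_{k-1}}(\alpha)$ is not redundant before the relevant process. The subtlety is that $\partial_{\alpha_{k-1}}(\alpha)$ has $\alpha_k$ in its last position where $\alpha$ has $\alpha_{k-1}$, so comparisons of timing require care about which vertex sits where. I would handle this by invoking Lemma~\ref{apareaconlamayor} with $\gamma=\partial_{\alpha_{k-1}}(\alpha)$ playing the role of the smaller face and deducing that any pairing redundant-izing it is constructed \emph{after} $\alpha\nearrow\beta$-type pairings involving larger inserted vertices; combined with the already-proven availability of $\alpha$ at stage $\mathcal{P}^{k-1,k,\alpha_{k-1}}$, this pins down the order and yields the contradiction. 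Once both availabilities are in place, the construction of the pairing $\partial_{\alpha_{k-1}}(\alpha)\nearrow\alpha$ by $\mathcal{P}^{k-1,k,\alpha_{k-1}}$ is immediate from the definition of the algorithm, completing the proof.
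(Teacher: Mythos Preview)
Your overall plan matches the paper's: show both $\alpha$ and $\partial_{\alpha_{k-1}}(\alpha)$ are available when the relevant process runs, after which the pairing is automatic. Two issues, however.

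First, a bookkeeping slip: the relevant process is $\mathcal{P}^{k-1,k-1,\alpha_{k-1}}$, not $\mathcal{P}^{k-1,k,\alpha_{k-1}}$. The vertex $\alpha_{k-1}$ sits in position $k-1$ of the $k$-face $\alpha$ (positions are $0$-indexed), so $(\partial_{\alpha_{k-1}}(\alpha),\alpha)\in P_{k-1,k-1,\alpha_{k-1}}$ in the notation of Definition~\ref{parespotenciales}.

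Second, and more substantively, your route for the availability of $\partial_{\alpha_{k-1}}(\alpha)$ is tangled and would not go through as written. Since $\partial_{\alpha_{k-1}}(\alpha)$ is $(k-1)$-dimensional, it cannot become \emph{collapsible} during $\mathcal{P}^{k-1}$ at all (that could only happen in $\mathcal{P}^{k-2}$, which runs later), so that subcase is vacuous; what you describe there (``a vertex would have been inserted into it'') is in fact the \emph{redundant} case. For that genuine case, your proposed appeal to Lemma~\ref{apareaconlamayor} does not apply: the lemma presupposes a pre-existing pairing $\alpha\nearrow\beta$ to compare against, which you do not yet have, and your claimed inequality $\partial_{\alpha_{k-1}}(\alpha)\prec\alpha$ is false anyway (they agree through position $k-2$, and in position $k-1$ the former has $\alpha_k$ while the latter has $\alpha_{k-1}\prec\alpha_k$). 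The paper's argument at this point is direct and needs no auxiliary lemma: any pairing $\partial_{\alpha_{k-1}}(\alpha)\nearrow\partial_{\alpha_{k-1}}(\alpha)\cup\{v\}$ constructed \emph{before} $\mathcal{P}^{k-1,k-1,\alpha_{k-1}}$ must, by the ordering of the processes, insert a vertex $v$ with $\alpha_{k-1}\prec v$; but then $\partial_{\alpha_{k-1}}(\alpha)\cup\{v\}\in\mathcal{F}^k$ contradicts the maximality of $\alpha_{k-1}$ in $\alpha$ straight from Definition~\ref{closing-vertex}.
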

\begin{proof}
By Lemma~\ref{noredundant}, $\alpha$ is available at the start of process $\mathcal{P}^{k-1}$ and, in fact, at the start of process $\mathcal{P}^{k-1,k-1,\alpha_{k-1}}$, in view of Corollary~\ref{collapsible} and the hypothesis on $\alpha_k$. The asserted pairing follows since $\partial_{\alpha_{k-1}}(\alpha)=[\alpha_0,\ldots,\widehat{\alpha_{k-1}},\alpha_k]$ is also available at the start of process $\mathcal{P}^{k-1,k-1,\alpha_{k-1}}$. Indeed, a potential pairing $\partial_{\alpha_{k-1}}(\alpha)\nearrow\partial_{\alpha_{k-1}}(\alpha)\cup\{v\}$ constructed at a stage before $\mathcal{P}^{k-1,k-1,\alpha_{k-1}}$ would have $\alpha_{k-1}\prec v$, contradicting the maximality of $\alpha_{k-1}$ in $\alpha$.
\end{proof}

\begin{remark}\label{noanalogue}{\em
Consider the gradient field on the projective plane in Examples~\ref{projectiveandtorus}. Neither $5$ nor $2$ are maximal in $[1,2,5]$ (due to the faces $[1,2,6]$ and $[1,3,5]$), yet the pairing $[1,5]\nearrow[1,2,5]$ holds.
}\end{remark}

More generally,
\begin{proposition}\label{general-ahora-si}
For a face $\alpha=[\alpha_0,\ldots,\alpha_k]\in\mathcal{F}^k$ and an integer $r\in\{0,1,\ldots,k\}$ with $\alpha_r$ maximal in $\alpha$, the pairing $\partial_{\alpha_r}(\alpha)\nearrow\alpha$ holds provided
\begin{equation}\label{additionalrestr}
\mbox{for any sequence $r+1\leq t_1<\cdots<t_p\leq k$, the ordered vertices $\alpha_{t_1},\ldots,\alpha_{t_p}$ are non-maximal in $\alpha$.}
\end{equation}
\end{proposition}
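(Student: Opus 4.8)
The plan is to show that $\mathcal{A}$ creates the pairing $\partial_{\alpha_r}(\alpha)\nearrow\alpha$ precisely during the instruction $\mathcal{P}^{k-1,r,\alpha_r,\partial_{\alpha_r}(\alpha)}$. Since $\alpha_r$ occupies position $r$ of $\alpha=\partial_{\alpha_r}(\alpha)\cup\{\alpha_r\}$, this reduces to checking that both $\partial_{\alpha_r}(\alpha)$ and $\alpha$ are still available when process $\mathcal{P}^{k-1,r,\alpha_r}$ begins (no earlier instruction \emph{within} $\mathcal{P}^{k-1,r,\alpha_r}$ can touch either face, as it would have to be the instruction $\mathcal{P}^{k-1,r,\alpha_r,\partial_{\alpha_r}(\alpha)}$ itself). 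Availability of $\partial_{\alpha_r}(\alpha)$ is the easy half and uses only maximality of $\alpha_r$ in $\alpha$: before $\mathcal{P}^{k-1,r,\alpha_r}$ the $(k-1)$-face $\partial_{\alpha_r}(\alpha)$ can be removed only as the redundant face of a pairing $\partial_{\alpha_r}(\alpha)\nearrow\partial_{\alpha_r}(\alpha)\cup\{v\}$ built during $\mathcal{P}^{k-1,s,v}$ with either $s>r$, or $s=r$ and $\alpha_r\prec v$; in all these cases the inserted vertex $v$ satisfies $\alpha_r\prec v$ (for $s>r$ because the vertex in position $s-1$ of $\partial_{\alpha_r}(\alpha)$ is $\alpha_s\succ\alpha_r$), so $\partial_{\alpha_r}(\alpha)\cup\{v\}\notin\mathcal{F}^k$ by maximality of $\alpha_r$ and no such pairing exists. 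This is the argument already used in the proof of Proposition~\ref{startofgeneral-collapsible}, and condition~(\ref{additionalrestr}) plays no role in it.

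Availability of $\alpha$ is the crux, and I would establish it by induction on $j:=k-r$. The base case $j=0$, i.e., $r=k$, is exactly the implication \emph{(\ref{maxbis})}$\Rightarrow$\emph{(\ref{maxmax})} of Corollary~\ref{collapsible}. For $j\geq1$: by Lemma~\ref{noredundant} the maximal vertex $\alpha_r$ keeps $\alpha$ from being redundant, so $\alpha$ is available at the start of $\mathcal{P}^{k-1}$ and can only be consumed before $\mathcal{P}^{k-1,r,\alpha_r}$ through a collapsible pairing $\partial_{\alpha_s}(\alpha)\nearrow\alpha$ built during $\mathcal{P}^{k-1,s,\alpha_s}$ for some $s\in\{r+1,\ldots,k\}$. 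Suppose such an $s$ exists. Condition~(\ref{additionalrestr}) applied with $p=1$ and $t_1=s$ makes $\alpha_s$ non-maximal in $\alpha$, so $\alpha(s)=\partial_{\alpha_s}(\alpha)\cup\{\alpha^s\}$ is a face of $K$ in which $\alpha^s$ is maximal; writing $s'$ for the position of $\alpha^s$ in $\alpha(s)$, one has $s'\geq s$ (because $\alpha^s\succ\alpha_s\succ\alpha_{s-1}$), hence $k-s'<j$. The vertices of $\alpha(s)$ in positions $>s'$ are exactly $\alpha_{s'+1},\ldots,\alpha_k$, and for every sequence $s'+1\leq u_1<\cdots<u_q\leq k$ the hypothesis~(\ref{additionalrestr}) for $\alpha$ applied to the sequence $s<u_1<\cdots<u_q$ says, by the way iterated non-maximality is built up in Definition~\ref{closing-vertex} (the first face constructed in that iteration being $\alpha(s)$ itself), exactly that $\alpha_{u_1},\ldots,\alpha_{u_q}$ are non-maximal in $\alpha(s)$. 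Thus $(\alpha(s),s')$ satisfies the hypotheses of the proposition, and the inductive hypothesis yields $\partial_{\alpha^s}(\alpha(s))\nearrow\alpha(s)$, i.e., $\partial_{\alpha_s}(\alpha)\nearrow\alpha(s)$, constructed during $\mathcal{P}^{k-1,s',\alpha^s}$. Since $s'>s$, or $s'=s$ and $\alpha^s\succ\alpha_s$, this process runs before $\mathcal{P}^{k-1,s,\alpha_s}$, so $\partial_{\alpha_s}(\alpha)$ has already been matched — with $\alpha(s)$, not with $\alpha$ — contradicting the fact that $W$ is a matching. Hence no such $s$ exists, $\alpha$ is available when $\mathcal{P}^{k-1,r,\alpha_r}$ begins, and $\partial_{\alpha_r}(\alpha)\nearrow\alpha$ is created, as wanted.

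I expect the delicate point to be the verification, inside the inductive step, that the three ingredients needed to invoke the proposition on $(\alpha(s),s')$ — that $\alpha^s$ is maximal in $\alpha(s)$, that $k-s'<k-r$, and that condition~(\ref{additionalrestr}) holds for $\alpha(s)$ at position $s'$ — are all genuinely supplied by condition~(\ref{additionalrestr}) for $\alpha$. This forces one to track how positions in $\alpha(s)$ relate to positions in $\alpha$ above the slot $s'$, and to confirm that the chain of faces $\alpha(s)(u_1),\alpha(s)(u_1)(u_2),\ldots$ of Definition~\ref{closing-vertex} coincides with the chain $\alpha(s,u_1),\alpha(s,u_1,u_2),\ldots$ furnished by the hypothesis for $\alpha$. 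Once that is settled, the timing comparison between $\mathcal{P}^{k-1,s',\alpha^s}$ and $\mathcal{P}^{k-1,s,\alpha_s}$, together with the remaining bookkeeping, is routine.
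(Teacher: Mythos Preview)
Your argument is correct and follows the same strategy as the paper: show that both $\partial_{\alpha_r}(\alpha)$ and $\alpha$ are available at the start of $\mathcal{P}^{k-1,r,\alpha_r}$, handling the latter by induction and ruling out a premature pairing $\partial_{\alpha_s}(\alpha)\nearrow\alpha$ (for $s>r$) via the inductively guaranteed pairing $\partial_{\alpha_s}(\alpha)\nearrow\alpha(s)$. The paper phrases the induction as ``decreasing induction on $r$'' rather than on $k-r$, and is terser about why the hypotheses transfer to $\alpha(s)$, but the content is the same; your careful tracking of positions in $\alpha(s)$ and the identification $\alpha(s)(u_1,\ldots,u_q)=\alpha(s,u_1,\ldots,u_q)$ is exactly the verification the paper leaves implicit.
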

\begin{proof}
We argue by decreasing induction on $r=k,k-1,\ldots,0$. The grounding cases $r=k$ and $r=k-1$ are covered by Corollary~\ref{collapsible} and Proposition~\ref{startofgeneral-collapsible}, respectively. For the inductive step, the maximality of $\alpha_r$ in~$\alpha$ assures both that $\alpha$ is available at the start of $\mathcal{P}^{k-1}$ (Lemma~\ref{noredundant}), and that $\partial_{\alpha_r}(\alpha)$ is available at the start of $\mathcal{P}^{k-1,r,\alpha_r}$. It thus suffices to note that~(\ref{additionalrestr}) implies that $\alpha$ is also available at the start of $\mathcal{P}^{k-1,r,\alpha_r}$. But a potential pairing $[\alpha_0,\ldots,\widehat{\alpha_{t_1}},\ldots,\alpha_k]\nearrow[\alpha_0,\ldots,\alpha_k]$ previous in $\mathcal{A}$ to the intended pairing $\partial_{\alpha_r}(\alpha)\nearrow\alpha$, i.e., with $t_1\in\{r+1,\ldots,k\}$ is inductively ruled out by the (yet previous in $\mathcal{A}$) pairing $$[\alpha_0,\ldots,\widehat{\alpha_{t_1}},\ldots,\alpha_k]\nearrow\alpha(t_1)=[\alpha_0,\ldots,\widehat{\alpha_{t_1}},\{\alpha^{t_1},\alpha_{t_1+1},\ldots,\alpha_k\}],$$ where the use of curly braces is meant to indicate that $\alpha^{t_1}$ may occupy any position among the ordered vertices $\alpha_{t_1+1},\ldots,\alpha_k$.
\end{proof}

Not all conditions in~(\ref{additionalrestr}) would be needed in concrete instances of Proposition~\ref{general-ahora-si}. For instance, this will be (recursively) the case if, in the previous proof, some $\alpha^{t_1}$ turns out to be larger than some of the vertices $\alpha_{t_1+1},\ldots,\alpha_k$. 

\begin{example}\label{antepenultima}{\em
The pairing $\partial_{\alpha_{k-2}}(\alpha)\nearrow\alpha=[\alpha_0,\ldots,\alpha_k]$ holds provided \emph{(i)} $\alpha_{k-2}$ is maximal in $\alpha$, \emph{(ii)} $\alpha_{k-1}$ is non-maximal in $\alpha$, and \emph{(iii)} $\alpha_k$ is non-maximal in $\alpha$ as well as in $\alpha(k-1)$. Note that~\emph{(ii)} is used in order to state~\emph{(iii)}.
}\end{example}

Theorem~\ref{extended} below, a far-reaching extension of Proposition~\ref{general-ahora-si}, provides sufficient conditions that allow us to identify ``exceptional'' pairings such as the one noted in Remark~\ref{noanalogue}.

\begin{definition}\label{collapsing}
A vertex $\alpha_r$ of a face $\alpha=[\alpha_0,\ldots,\alpha_k]\in\mathcal{F}^k$ is said to be collapsing in $\alpha$ provided \emph{(i)} the face $\alpha$ is not redundant, \emph{(ii)} condition~\emph{(\ref{additionalrestr})} holds and \emph{(iii)} for every $v$ with $\alpha_r\prec v$ and $\partial_{\alpha_r}(\alpha)\cup\{v\}\in\mathcal{F}^k$, there is a vertex $\alpha_j$ of $\alpha$ with $v\prec\alpha_j$ such that $\alpha_j$ is collapsing in $\partial_{\alpha_r}(\alpha)\cup\{v\}$.
\end{definition}

The first and third conditions in Definition~\ref{collapsing} hold when $\alpha_r$ is maximal in $\alpha$. Note the recursive nature of Definition~\ref{collapsing}.

\begin{theorem}\label{extended}
If $\alpha_r$ is collapsing in $\alpha$, then $\partial_{\alpha_r}(\alpha)\nearrow\alpha$.
\end{theorem}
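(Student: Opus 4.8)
The plan is to prove Theorem~\ref{extended} by induction on the dimension $k$ of $\alpha$, with an inner (decreasing) induction on the position $r$, mirroring the structure already used in Proposition~\ref{general-ahora-si}. The key point is that the recursive clause (iii) in Definition~\ref{collapsing} is exactly what is needed to rule out the ``premature'' pairings that could consume either $\alpha$ or $\partial_{\alpha_r}(\alpha)$ before the process $\mathcal{P}^{k-1,r,\alpha_r}$ has a chance to construct $\partial_{\alpha_r}(\alpha)\nearrow\alpha$. So the skeleton of the argument is: (a) condition (i), non-redundancy of $\alpha$, combined with the maximality information extractable from clause (iii) via Lemma~\ref{noredundant}, shows $\alpha$ survives (is available) up to the start of process $\mathcal{P}^{k-1}$; (b) condition (iii) guarantees no pairing of the form $[\alpha_0,\ldots,\widehat{\alpha_{t}},\ldots,\alpha_k]\nearrow\alpha$ with $t>r$ is built by $\mathcal{A}$ before $\mathcal{P}^{k-1,r,\alpha_r}$; here condition (ii), i.e.~(\ref{additionalrestr}), is what lets us invoke Proposition~\ref{general-ahora-si} to show that such a candidate redundant facet $[\alpha_0,\ldots,\widehat{\alpha_t},\ldots,\alpha_k]$ gets paired earlier via $\partial$ of the auxiliary face $\alpha(t)$, exactly as in the proof of Proposition~\ref{general-ahora-si}; and (c) $\partial_{\alpha_r}(\alpha)$ is available at the start of $\mathcal{P}^{k-1,r,\alpha_r}$: any earlier pairing $\partial_{\alpha_r}(\alpha)\nearrow\partial_{\alpha_r}(\alpha)\cup\{v\}$ would have $\alpha_r\prec v$ and $\partial_{\alpha_r}(\alpha)\cup\{v\}\in\mathcal{F}^k$, and clause (iii) then supplies a vertex $\alpha_j$ with $v\prec\alpha_j$ that is collapsing in $\partial_{\alpha_r}(\alpha)\cup\{v\}$; by the inductive hypothesis (same dimension $k$, but the relevant position of $\alpha_j$ sits strictly to the right, hence the decreasing induction on $r$ applies) the pairing $\partial_{\alpha_j}\big(\partial_{\alpha_r}(\alpha)\cup\{v\}\big)\nearrow \partial_{\alpha_r}(\alpha)\cup\{v\}$ is forced, so $\partial_{\alpha_r}(\alpha)\cup\{v\}$ is collapsible, not redundant, contradiction.

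First I would set up the double induction carefully and dispose of the base cases: $k=0$ is vacuous, and for the position induction the cases $r=k$ and $r=k-1$ are Corollary~\ref{collapsible} and Proposition~\ref{startofgeneral-collapsible} respectively (noting that ``collapsing'' in those positions reduces to ``maximal'', by the remark following Definition~\ref{collapsing}). Then I would carry out steps (a)--(c) above. For (b), I would essentially reproduce the inductive argument in the proof of Proposition~\ref{general-ahora-si}: a potential earlier pairing $\partial_{\alpha_{t_1}}(\alpha)\nearrow\alpha$ with $t_1\in\{r+1,\ldots,k\}$ is ruled out because (\ref{additionalrestr}) makes $\alpha_{t_1}$ non-maximal, so $\alpha(t_1)$ exists, and $\partial_{\alpha_{t_1}}(\alpha)$ gets paired with $\alpha(t_1)$ (rather than with $\alpha$) at an even earlier stage, since $\alpha^{t_1}\succ\alpha_{t_1}$ means that insertion of $\alpha^{t_1}$ is processed before insertion of $\beta_0\prec\alpha_r\preceq\alpha_{t_1}\prec\alpha^{t_1}$.

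The main obstacle I expect is bookkeeping the recursion in (c) cleanly: verifying that when clause (iii) hands us a collapsing vertex $\alpha_j$ in the face $\alpha':=\partial_{\alpha_r}(\alpha)\cup\{v\}$, the induction is genuinely well-founded. The face $\alpha'$ has the same dimension $k$ as $\alpha$, so dimension alone does not decrease; what decreases is a suitable ordinal attached to ``how far right and how large'' the inserted vertex is — concretely, $v\succ\alpha_r$ and $\alpha_j\succ v$, and the position of $\alpha_j$ in $\alpha'$ lies strictly to the right of the position of $\alpha_r$ in $\alpha$ in the ordered vertex list once $\alpha_r$ is removed and $v,\alpha_j$ re-sorted. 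I would make this precise by inducting on the pair $(k,\,k-r)$ lexicographically and checking that the invocation of the theorem for $\alpha_j$ in $\alpha'$ uses a strictly smaller value of $k-r$ (or, if that fails in a corner case, I would instead induct on $k$ together with the $\preceq$-rank of the largest vertex involved, and confirm clause (iii) strictly lowers it). Once the well-foundedness is pinned down, steps (a), (b), (c) assemble immediately: $\alpha$ and $\partial_{\alpha_r}(\alpha)$ are both available at the start of $\mathcal{P}^{k-1,r,\alpha_r}$, hence that process constructs the pairing $\partial_{\alpha_r}(\alpha)\nearrow\alpha$.
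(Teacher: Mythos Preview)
Your proposal is correct and follows essentially the same approach as the paper: decreasing induction on $r$, with condition~(i) replacing the appeal to Lemma~\ref{noredundant} (which is neither needed nor applicable here, so drop that reference in your step~(a)), condition~(ii) handling availability of $\alpha$ exactly as in Proposition~\ref{general-ahora-si}, and condition~(iii) plus induction handling availability of $\partial_{\alpha_r}(\alpha)$. Your well-foundedness check is sound (the position of $\alpha_j$ in $\alpha'=\partial_{\alpha_r}(\alpha)\cup\{v\}$ is exactly $j>r$), and note that only the base case $r=k$ is needed, since there ``collapsing'' genuinely forces ``maximal'' via condition~(iii); your claimed reduction at $r=k-1$ does not follow from the remark after Definition~\ref{collapsing}, but it is also unnecessary.
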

\begin{proof}
The proof is parallel to that of Proposition~\ref{general-ahora-si}. This time the induction is grounded by Corollary~\ref{collapsible} and the observation that, when $r=k$, condition~\emph{(iii)} in Definition~\ref{collapsing} implies in fact that $\alpha_k$ is maximal in~$\alpha$. The rest of the argument in the proof of Proposition~\ref{general-ahora-si} applies with two minor adjustments. First, Lemma~\ref{noredundant} is not needed ---neither can it be applied--- in view of condition~\emph{(i)}). Second, the fact that $\partial_{\alpha_r}(\alpha)$ is available at the start of $\mathcal{P}^{k-1,r,\alpha_r}$ comes directly from~\emph{(iii)} and induction.
\end{proof}

\section{Application to configuration spaces}\label{aplicacion}
We use the gradient field in the previous section in order to describe the cohomology ring of the configuration space of ordered pairs of points on a complete graph.

\subsection{Gradient field on Munkres' homotopy simplicial model}\label{aplimunk}
Let $K_m$ be the 1-dimensional skeleton of the full $(m-1)$-dimensional simplex on vertices $V_m=\{1,2,\ldots,m\}$. Thus $|K_m|$ is the complete graph on the $m$ vertices. The homotopy type of $\C(|K_m|,2)$ is well understood for $m\leq3$, so we assume $m\geq4$ from now on. We think of $K_m$ as an ordered simplicial complex with the natural order on $V_m$, and study $\C(|K_m|,2)$ through its simplicial homotopy model $C_m$ in~(\ref{homotopy-equivalence}). The condition $m\geq4$ implies that $C_m$ is a pure 2-dimensional complex, i.e., all of its maximal faces have dimension 2. Furthermore, 2-dimensional faces of $C_m$ have one of the forms
\begin{equation}\label{matrix-type-notation}
\left[\begin{matrix}
a & a & d \\
b & c & c
\end{matrix}\right]
\qquad\mbox{or}\qquad
\left[\begin{matrix}
a' & c' & c' \\
b' & b' & d'
\end{matrix}\right]
\end{equation}
where
\begin{equation}\label{condiciones1}
\mbox{$d>a\notin\{b,c\}$, \;\;$b<c\neq d$, \;\;$d'>b'\notin\{a',c'\}$ \;\;and \;\;$a'<c'\neq d'$.}
\end{equation}
Note that the matrix-type notation in~(\ref{matrix-type-notation}) is compatible with the notation $\alpha=[\alpha_0,\ldots,\alpha_k]$ in previous sections; each $\alpha_i$ now stands for a column-type vertex $\genfrac{}{}{0pt}{1}{a}{b}$ (with $a\neq b$). In what follows, the conditions in~(\ref{condiciones1}) on the integers $a,b,c,d,a',b',c',d'\in V_m$ will generally be implicit and omitted when writing a 2-simplex or one of its faces. For instance, the forced relations $a\neq b<d\neq a$ are omitted in item~(\ref{wn12}) of:

\begin{proposition}\label{Wn}
Let $W_m$ be the gradient field on $C_m$ constructed by the algorithm in Section~\ref{algorithm-field} with respect to the lexicographic order on the vertices $\genfrac{}{}{0pt}{1}{a}{b}=(a,b)\in V_m\times V_m\setminus\Delta_{V_m}$ of $C_m$. The full list of $W_m$-pairings is:
\begin{enumerate}[(a)]
\item\label{wn12} $\left[\genfrac{}{}{0pt}{1}{a}{b}\genfrac{}{}{0pt}{1}{a}{d}\right]\nearrow
\left[\genfrac{}{}{0pt}{1}{a}{b}\genfrac{}{}{0pt}{1}{a}{d}\genfrac{}{}{0pt}{1}{m}{d}\right]$, for $a<m>d$.

\item\label{wn3} $\left[
\genfrac{}{}{0pt}{1}{a}{b}
\genfrac{}{}{0pt}{1}{a}{m}
\right]\nearrow\left[
\genfrac{}{}{0pt}{1}{a}{b}
\genfrac{}{}{0pt}{1}{a}{m}
\genfrac{}{}{0pt}{1}{m-1}{m}
\right]$, for $a<m-1$.

\item\label{wn45} 
$\left[
\genfrac{}{}{0pt}{1}{a}{b}
\genfrac{}{}{0pt}{1}{c}{b}
\right]\nearrow\left[
\genfrac{}{}{0pt}{1}{a}{b}
\genfrac{}{}{0pt}{1}{c}{b}
\genfrac{}{}{0pt}{1}{c}{m}
\right]$, for $b<m>c$.

\item\label{wn6} 
$\left[
\genfrac{}{}{0pt}{1}{a}{b}
\genfrac{}{}{0pt}{1}{m}{b}
\right]\nearrow\left[
\genfrac{}{}{0pt}{1}{a}{b}
\genfrac{}{}{0pt}{1}{m}{b}
\genfrac{}{}{0pt}{1}{m}{m-1}
\right]$, for $b<m-1$.

\item\label{wn78} 
$\left[
\genfrac{}{}{0pt}{1}{a}{b}
\genfrac{}{}{0pt}{1}{c}{d}
\right]\nearrow\left[
\genfrac{}{}{0pt}{1}{a}{b}
\genfrac{}{}{0pt}{1}{c}{b}
\genfrac{}{}{0pt}{1}{c}{d}
\right]$, for $a<c$, $\hspace{.5mm}b<d$, $\hspace{.5mm}b\neq c\hspace{.5mm}$ and either $\hspace{.3mm}c<m>d\hspace{.7mm}$ or $\hspace{.7mm}c=m>d+1\hspace{.3mm}$.

\item\label{wn910} 
$\left[
\genfrac{}{}{0pt}{1}{a}{b}
\genfrac{}{}{0pt}{1}{c}{d}
\right]\nearrow\left[
\genfrac{}{}{0pt}{1}{a}{b}
\genfrac{}{}{0pt}{1}{a}{d}
\genfrac{}{}{0pt}{1}{c}{d}
\right]$, for $a<c$, $\hspace{.5mm}b<d$, $\hspace{.5mm}a\neq d$ \hspace{.5mm}and either $\hspace{.5mm}b=c<m>d\hspace{.7mm}$ or $\hspace{.7mm}c+1<m=d\hspace{.3mm}$.

\item\label{wn145} 
$\left[
\genfrac{}{}{0pt}{1}{a}{b}
\right]\nearrow\left[
\genfrac{}{}{0pt}{1}{a}{b}
\genfrac{}{}{0pt}{1}{m}{m-1}
\right]$, for either $\hspace{.5mm}b<m-1$ \hspace{.5mm}or\hspace{.7mm} $a<m-1=b$.

\item\label{wnpenultimo} 
$\left[
\genfrac{}{}{0pt}{1}{a}{m}
\right]\nearrow\left[
\genfrac{}{}{0pt}{1}{a}{m}
\genfrac{}{}{0pt}{1}{m-1}{m}
\right]$, for $a<m-1$.

\item\label{wnultimo} 
$\left[
\genfrac{}{}{0pt}{1}{m-1}{m}
\right]\nearrow\left[
\genfrac{}{}{0pt}{1}{m-1}{m-2}
\genfrac{}{}{0pt}{1}{m-1}{m}
\right]$.
\end{enumerate}
In particular, the critical faces are:
\begin{enumerate}[(a)]\addtocounter{enumi}{9}
\item\label{wnj} In dimension $0$, the vertex $\left[\genfrac{}{}{0pt}{1}{m}{m-1}\right]$.
\item\label{wnk} In dimension $1$, the simplices:
\begin{itemize}
\item[(k.1)] $\;\;\left[
\genfrac{}{}{0pt}{1}{a}{b}
\genfrac{}{}{0pt}{1}{m-1}{m}
\right]$, with either $a=m-1>b+1\hspace{.4mm}$ or $\hspace{.6mm}a<m-1\geq b$.
\item[(k.2)]
$\;\;\left[
\genfrac{}{}{0pt}{1}{m}{b}
\genfrac{}{}{0pt}{1}{m}{d}
\right]$, with $d<m-1$.
\item[(k.3)]
$\;\;\left[
\genfrac{}{}{0pt}{1}{a}{m}
\genfrac{}{}{0pt}{1}{c}{m}
\right]$, with $c<m-1$.
\end{itemize}
\item\label{wnl} In dimension $2$, the simplices 
$\left[
\genfrac{}{}{0pt}{1}{a}{b}
\genfrac{}{}{0pt}{1}{a}{d}
\genfrac{}{}{0pt}{1}{c}{d}
\right]$
with $\hspace{.3mm}b\neq c<m>d$.
\end{enumerate}
\end{proposition}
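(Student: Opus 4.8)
# Proof Proposal for Proposition~\ref{Wn}

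\textbf{Overall strategy.} The plan is to verify the claimed list of pairings by a careful bookkeeping argument, exploiting the local collapsibility criteria of Section~\ref{colacond} (Corollary~\ref{collapsible}, Proposition~\ref{startofgeneral-collapsible}, Proposition~\ref{general-ahora-si}, and especially Theorem~\ref{extended}) rather than by simulating the algorithm $\mathcal{A}$ step by step. Since $C_m$ is a pure $2$-dimensional complex, the algorithm runs only $\mathcal{P}^1$ (pairing $1$-faces with $2$-faces) and then $\mathcal{P}^0$ (pairing $0$-faces with leftover $1$-faces). I would organize the proof dimension by dimension, working top-down: first show every $2$-face not appearing as a collapsible cell in (\ref{wn12})--(\ref{wn910}) is critical (i.e., is the one in (\ref{wnl})), then show (\ref{wn12})--(\ref{wn910}) are exactly the $\mathcal{P}^1$-pairings, then analyze which $1$-faces survive into $\mathcal{P}^0$ and pair as in (\ref{wn145})--(\ref{wnultimo}), leaving the critical list (\ref{wnj})--(\ref{wnk}). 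A crucial simplification is to use the faster algorithm $\overline{\mathcal{A}}$ (Theorem~\ref{mismoscampos}): since $W_m=\overline{W}_m$, we may reason with insertion of a vertex in \emph{any} position, which meshes well with the ``closing vertex'' constructions $\alpha^i$, $\alpha(i)$ of Definition~\ref{closing-vertex}.

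\textbf{The $2$-faces.} Using the explicit description~(\ref{matrix-type-notation})--(\ref{condiciones1}) of $2$-faces, I would enumerate the facets of each type and, for each $2$-face $\beta$, identify its $\preceq$-largest vertex and test maximality. For $\beta=\left[\genfrac{}{}{0pt}{1}{a}{b}\genfrac{}{}{0pt}{1}{a}{d}\genfrac{}{}{0pt}{1}{m}{d}\right]$ (type (\ref{wn12}), or a variant with $m-1$ as in (\ref{wn3})), one checks that the $\preceq$-largest column $\genfrac{}{}{0pt}{1}{m}{d}$ is maximal in $\beta$, so Corollary~\ref{collapsible} gives the pairing $\partial_{\cdot}(\beta)\nearrow\beta$ directly. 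For the ``interior'' faces (\ref{wn78})--(\ref{wn910}), maximality of the top vertex generally fails, so I would invoke Theorem~\ref{extended}: I must exhibit, for each $2$-face $\beta$ of the relevant shape, a \emph{collapsing} vertex $\alpha_r$ with $\beta=\partial_{\alpha_r}(\beta)\cup\{\text{that vertex}\}$ in the form claimed, checking conditions (i)--(iii) of Definition~\ref{collapsing}. Condition (i) (non-redundancy) follows because $C_m$ is $2$-dimensional; condition (ii) (the ordering condition (\ref{additionalrestr})) and condition (iii) are verified by the explicit arithmetic of the integers $a,b,c,d$ — this is where the case splits ``$c<m>d$ vs.\ $c=m>d+1$'' and ``$b=c<m>d$ vs.\ $c+1<m=d$'' come from. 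Dually, for the $2$-faces of type (\ref{wnl}), i.e.\ $\left[\genfrac{}{}{0pt}{1}{a}{b}\genfrac{}{}{0pt}{1}{a}{d}\genfrac{}{}{0pt}{1}{c}{d}\right]$ with $b\neq c<m>d$, I would show \emph{no} vertex is collapsing and (using the maximality/collapsing obstructions packaged in Lemma~\ref{noredundant} and the analysis of $\mathcal{P}^1$) that such a face is neither redundant nor collapsible — hence critical. Matching cardinalities (each $2$-face is collapsible, redundant, or critical, and redundancy is impossible) forces the list of $\mathcal{P}^1$-pairings to be complete.

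\textbf{The $1$-faces and $0$-faces.} After $\mathcal{P}^1$, the $1$-faces still unpaired are precisely those not appearing on the left of (\ref{wn12})--(\ref{wn910}); I would tabulate them and see they split into the three families in (\ref{wnk}) plus the ones that will be matched downward in $\mathcal{P}^0$, namely generic $\left[\genfrac{}{}{0pt}{1}{a}{b}\right]$-type edges meeting $\genfrac{}{}{0pt}{1}{m}{m-1}$, the edges $\left[\genfrac{}{}{0pt}{1}{a}{m}\genfrac{}{}{0pt}{1}{m-1}{m}\right]$, and $\left[\genfrac{}{}{0pt}{1}{m-1}{m-2}\genfrac{}{}{0pt}{1}{m-1}{m}\right]$. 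Then $\mathcal{P}^0$, processing vertices from $\preceq$-largest down, attaches each surviving edge to its smaller endpoint unless that endpoint is already used; tracing this greedily — the $\preceq$-largest vertex is $\genfrac{}{}{0pt}{1}{m}{m-1}$, which therefore remains critical (\ref{wnj}), while all other $0$-cells get consumed — yields exactly (\ref{wn145})--(\ref{wnultimo}) and leaves the critical lists (\ref{wnj})--(\ref{wnk}). For the edges in (\ref{wnk}) one must separately confirm they are \emph{not} redundant (no available coface was left after $\mathcal{P}^1$) and \emph{not} collapsible in $\mathcal{P}^0$ (their smaller endpoint is already matched, or the edge is not of the form produced by $\mathcal{P}^0$); this is a finite check against the $\mathcal{P}^0$ description.

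\textbf{Main obstacle.} The hard part will be the bookkeeping in the $2$-face analysis: verifying condition (iii) of Definition~\ref{collapsing} is genuinely recursive, and the ``exceptional'' pairings like (\ref{wn78}) with $c=m$ require following the recursion one or two levels deep (exactly as in Example~\ref{antepenultima} and Remark~\ref{noanalogue}), so I would need to organize the $2$-faces into a small number of shape classes and, within each, pin down the closing vertex $\alpha^i$ explicitly using the arithmetic constraints (\ref{condiciones1}). A secondary subtlety is ensuring that the case boundaries (e.g.\ $d+1<m$ vs.\ $d+1=m$, or $b=c$ vs.\ $b\neq c$) are precisely where collapsibility switches between the two ``slots'' it can be inserted into — getting these inequalities exactly right is the delicate point, but it is a finite and mechanical, if lengthy, verification once the framework of Theorem~\ref{extended} is in place.
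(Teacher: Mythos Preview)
Your plan is essentially correct and uses the same toolkit as the paper, but you are overestimating the work required in two places. First, the paper does \emph{not} need Theorem~\ref{extended} for the whole block (\ref{wn78})--(\ref{wn910}): every pairing in (\ref{wn12})--(\ref{wnultimo}) is obtained from the elementary criteria (Corollary~\ref{collapsible} for a maximal last vertex, or Proposition~\ref{startofgeneral-collapsible} for a maximal next-to-last vertex with non-maximal last vertex), \emph{except} the single case~(\ref{wn910}) with $b\neq c$ (hence $c+1<m=d$), which is the only place the recursive Definition~\ref{collapsing}/Theorem~\ref{extended} is invoked. So the ``lengthy recursion one or two levels deep'' you anticipate collapses to a single shape class, and your identification of ``(\ref{wn78}) with $c=m$'' as the exceptional case is off. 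Second, your plan for certifying the critical faces (show no vertex is collapsing, etc.) is heavier than what the paper does: once the pairing list (\ref{wn12})--(\ref{wnultimo}) is established, the paper simply observes by direct inspection that the faces in (\ref{wnj})--(\ref{wnl}) are exactly those not appearing in any pairing, and then checks that each such face has all of its faces and cofaces already involved in some pairing --- which, by the built-in maximality~(\ref{maxim}) of $W$, forces criticality. No appeal to Definition~\ref{collapsing} or to a cardinality match is needed for that step. Apart from these simplifications your outline matches the paper's argument.
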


Note that the condition $a\neq d$ in item~\emph{(\ref{wn910})} above is forced to hold in the stronger form $a<d$. 

\begin{proof}
All pairings, except for the one in~\emph{(\ref{wn910})} when $b\neq c$ (so that $c+1<m=d\hspace{.3mm}$), are given by Corollary~\ref{collapsible} and Proposition~\ref{startofgeneral-collapsible}. The exceptional case requires the stronger Theorem~\ref{extended}. On the other hand, direct inspection shows that the faces listed as critical are precisely those not taking part in the list of $W_m$-pairings. The proof is then complete by observing that the criticality of any $d$-dimensional face $\alpha$ in \emph{(\ref{wnj})--(\ref{wnl})} is forced by the fact that all possible $(d-1)$-faces and all possible $(d+1)$-cofaces of $\alpha$ are involved in one of the pairings \emph{(\ref{wn12})--(\ref{wnultimo}).} For instance, a face $\left[
\genfrac{}{}{0pt}{1}{a}{b}
\genfrac{}{}{0pt}{1}{a}{d}
\genfrac{}{}{0pt}{1}{c}{d}
\right]$
in~\emph{(\ref{wnl})} is not collapsible since the three potential pairings 
$$
\left[\begin{matrix} a&a\\b&d\end{matrix}\right]\raisebox{-1.63mm}{\rotatebox{45}{$\dashrightarrow$}}\left[\begin{matrix} a&a&c\\b&d&d\end{matrix}\right],
\quad\;
\left[\begin{matrix} a&c\\b&d\end{matrix}\right]\raisebox{-1.63mm}{\rotatebox{45}{$\dashrightarrow$}}\left[\begin{matrix} a&a&c\\b&d&d\end{matrix}\right]
\quad\mbox{and}\quad
\left[\begin{matrix} a&c\\d&d\end{matrix}\right]\raisebox{-1.63mm}{\rotatebox{45}{$\dashrightarrow$}}\left[\begin{matrix} a&a&c\\b&d&d\end{matrix}\right]
$$
are ruled out by~\emph{(\ref{wn12})},~\emph{(\ref{wn78})} and~~\emph{(\ref{wn45})}, respectively. 
\end{proof}

The next-to-last sentence in the proof above reflects the maximality of $W_m$ ---see~(\ref{maxim}). On the other hand, a straightforward counting shows that the number $c_d$ of critical faces in dimension $d\in\{0,1,2\}$ is given by
\begin{equation}\label{rangos}
\mbox{$c_0=1$, \ $c_1=2(m-2)^2-1$ \ and \ $\hspace{.2mm}c_2=\frac{(m-1)(m-2)(m-3)(m-4)}{4}.$}
\end{equation}
In particular, the Euler characteristic of $\C(|K_m|,2)$ is given by $\frac{m(m^3-10m^2+27m-18)}{4}$, which yields an explicit expression for the conclusion of~\cite[Corollary~1.2]{MR2491587} in the case of complete graphs. Note in particular that the gradient field $W_4$ is optimal:
\begin{corollary}\label{caso4}
There is a homotopy equivalence $\C(|K_4|,2)\simeq \bigvee_7 S^1$.
\end{corollary}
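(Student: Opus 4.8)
The plan is to specialize Proposition~\ref{Wn} to $m=4$ and then invoke Forman's theorem. First I would read off the critical cells of $W_4$ from~(\ref{rangos}): substituting $m=4$ gives $c_0=1$, $c_1=2(4-2)^2-1=7$ and $c_2=\tfrac{(4-1)(4-2)(4-3)(4-4)}{4}=0$. The vanishing $c_2=0$ is also transparent from the explicit description in item~(\ref{wnl}): together with the standing hypotheses~(\ref{condiciones1}), a critical $2$-face $\left[\genfrac{}{}{0pt}{1}{a}{b}\genfrac{}{}{0pt}{1}{a}{d}\genfrac{}{}{0pt}{1}{c}{d}\right]$ forces $a<c<m$, $b<d<m$, $a\notin\{b,d\}$, $c\neq d$ and $b\neq c$, and for $m=4$ a one-line case analysis on $c\in\{2,3\}$ shows that no such tuple exists. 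Thus $W_4$ is a discrete gradient field on the (pure $2$-dimensional) complex $C_4$ all of whose critical cells sit in dimensions $0$ and $1$, with a single critical $0$-cell and seven critical $1$-cells.

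By the fundamental theorem of discrete Morse theory~\cite{MR1358614}, $|C_4|$ is then homotopy equivalent to a CW complex $X$ with exactly one cell of dimension $d$ for each critical $d$-cell of $W_4$; hence $X$ has one $0$-cell, seven $1$-cells, and no cell of dimension $\geq 2$ (in particular no $2$-cells, although $C_4$ itself is $2$-dimensional). Now a CW complex built from a single $0$-cell by attaching seven $1$-cells is, on the nose, the wedge $\bigvee_7 S^1$: the attaching map of each $1$-cell is a map $S^0\to X^{(0)}=\{\mathrm{pt}\}$, necessarily constant, so each $1$-cell is a loop based at the unique $0$-cell. Combining $X\simeq|C_4|$ with the homotopy equivalence $|C_4|\simeq\C(|K_4|,2)$ recorded in~(\ref{homotopy-equivalence}) gives $\C(|K_4|,2)\simeq\bigvee_7 S^1$.

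There is essentially no obstacle here beyond Proposition~\ref{Wn} itself, which already does all the work; the argument is a direct book-keeping of critical cells followed by the standard passage to the Morse CW model. The only point worth a line of care is that the single critical $0$-cell really does yield a \emph{connected} model --- but this is automatic, since $X$ is obtained from one $0$-cell by attaching cells (equivalently, it is visibly the wedge $\bigvee_7 S^1$).
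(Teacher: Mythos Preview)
Your proposal is correct and follows exactly the route the paper intends: the corollary is stated there as an immediate consequence of the critical-cell count~(\ref{rangos}) (the paper only remarks that ``the gradient field $W_4$ is optimal'' and offers no further argument). Your write-up simply fills in the standard details the paper leaves implicit.
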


Corollary~\ref{caso4} should be compared to the fact that the configuration space of \emph{unordered} pairs of points in $|K_4|$ has the homotopy type of $\bigvee_4 S^1$ (cf.~\cite[Example~4.5]{MR2171804}.)

\subsection{Morse cochain complex}
The Morse coboundary map $\delta\colon \mu^0(C_m)\to\mu^1(C_m)$ is forced to vanish since $c_0=1$. More interestingly:
\begin{proposition}\label{delta1}
The coboundary $\delta\colon \mu^1(C_m)\to\mu^2(C_m)$ vanishes on the duals of the critical faces of types (k.2) and (k.3) in Proposition~\ref{Wn}. For the duals of the critical faces of type (k.1) we have
\begin{equation}\label{cobory}
\delta\left(\left[\begin{matrix}a&m-1\\b&m\end{matrix}\right]\right)=
\sum\left[\begin{matrix}a&a&x\\y&b&b\end{matrix}\right]-
\sum\left[\begin{matrix}a&a&x\\b&y&y\end{matrix}\right]+
\sum\left[\begin{matrix}x&x&a\\b&y&y\end{matrix}\right]-
\sum\left[\begin{matrix}x&x&a\\y&b&b\end{matrix}\right],
\end{equation}
where all four summands run over all integers $x$ and $y$ that render critical 2-faces. Explicitly, $a<x<m$ in the first and second summations, $x<a$ in the third and fourth summations, $b<y<m$ in the second and third summations, $y<b$ in the first and fourth summations, and $b\neq x\neq y\neq a$ in all four summations.
\end{proposition}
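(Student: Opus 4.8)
The computation is a bookkeeping exercise with the Morse coboundary formula~(\ref{morsecoboundary}): for critical faces $\alpha^{(1)}$ and $\beta^{(2)}$ of $C_m$, the coefficient of $\beta$ in $\delta(\alpha)$ is the multiplicity-counted number of mixed gradient paths from $\beta$ to $\alpha$, and each such path consists of a descent $\beta\searrow\gamma$ onto a facet $\gamma$ of $\beta$ followed by an upper gradient path from $\gamma$ to $\alpha$. I would begin by recording that the three facets of a critical $2$-face $\beta=\left[\begin{smallmatrix}a&a&c\\b&d&d\end{smallmatrix}\right]$ (item~(\ref{wnl}) of Proposition~\ref{Wn}) are all \emph{redundant}: deleting its first, second and third columns gives, after comparison with Proposition~\ref{Wn}, the redundant $1$-cells $\left[\begin{smallmatrix}a&c\\d&d\end{smallmatrix}\right]$, $\left[\begin{smallmatrix}a&c\\b&d\end{smallmatrix}\right]$ and $\left[\begin{smallmatrix}a&a\\b&d\end{smallmatrix}\right]$ of types~(\ref{wn45}), (\ref{wn78}) and~(\ref{wn12}) — and none of them can be collapsible or critical, since all entries of $\beta$ lie in $\{1,\dots,m-1\}$ whereas the collapsible $1$-cells in items~(\ref{wn145})--(\ref{wnultimo}) and the critical $1$-cells in items (k.1)--(k.3) all involve the vertex~$m$. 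In particular, after the initial descent every mixed path is forced to climb into the $W$-partner of~$\gamma$.

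The core of the argument is then to trace the resulting gradient flow. Starting from a redundant $1$-cell, the upward step is prescribed by the pairing of~(\ref{wn12})--(\ref{wn910}) to which it belongs (each of those pairings alters the current matrix in exactly one way), while the downward steps branch over the two facets of the current collapsible $2$-cell other than its partner; a branch dies as soon as it reaches a collapsible $1$-cell (a contributing mixed path must terminate at a critical cell) and it succeeds when it reaches~$\alpha$. One keeps track of the product $\iota_{\gamma_0,\beta}\cdot\prod_j\bigl(-\iota_{\gamma_{j-1},\delta_j}\iota_{\gamma_j,\delta_j}\bigr)$, every incidence number being $(-1)^{\,\text{(position of the deleted column)}}$. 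A recurring observation that makes this finite is that once a ``small'' vertex is dropped from a matrix it never reappears, so after boundedly many steps each branch either picks up one of the columns $\left[\begin{smallmatrix}m\\m-1\end{smallmatrix}\right]$, $\left[\begin{smallmatrix}m-1\\m\end{smallmatrix}\right]$ and enters a collapsible $1$-cell, or settles at a single critical $1$-cell; consequently each fixed critical $1$-cell is reached from a fixed facet of $\beta$ through at most one gradient path, whose multiplicity is~$\pm1$.

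For the vanishing statement I would argue as follows. Let $\alpha=\left[\begin{smallmatrix}m&m\\b&d\end{smallmatrix}\right]$ be a critical $1$-cell of type (k.2), so $b<d<m-1$. Because $\alpha$ carries a constant row $\equiv m$, running the flow backwards shows that the only critical $2$-face from which $\alpha$ is reachable is $\beta=\left[\begin{smallmatrix}a&a&c\\b&d&d\end{smallmatrix}\right]$, that exactly two of its facets flow to $\alpha$, namely $\left[\begin{smallmatrix}a&a\\b&d\end{smallmatrix}\right]$ and $\left[\begin{smallmatrix}a&c\\b&d\end{smallmatrix}\right]$ — the deletions of the third and of the second column of $\beta$ — and that in either case there is a single gradient path to $\alpha$, of multiplicity $+1$ (a string of moves of types~(\ref{wn12}) and~(\ref{wn78}) which raises the top-row entries to~$m$). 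As $\iota_{\left[\begin{smallmatrix}a&a\\b&d\end{smallmatrix}\right],\beta}=+1$ while $\iota_{\left[\begin{smallmatrix}a&c\\b&d\end{smallmatrix}\right],\beta}=-1$, the two mixed paths cancel, so $\beta$ contributes $0$ to $\delta(\alpha)$ and hence $\delta(\alpha)=0$. Transposing the two coordinates gives the same conclusion for the critical $1$-cells of type (k.3).

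Finally, for $\alpha=\left[\begin{smallmatrix}a&m-1\\b&m\end{smallmatrix}\right]$ of type (k.1): a gradient path reaching $\alpha$ must at some stage create the column $\left[\begin{smallmatrix}m-1\\m\end{smallmatrix}\right]$, a column that a critical $2$-face does not possess and that can only be produced by an ascent of a few specific types, and running the flow backwards from $\alpha$ under this constraint should single out exactly the four families of critical $2$-faces appearing on the right-hand side of~(\ref{cobory}); in each family there is a unique mixed gradient path to $\alpha$, of multiplicity $\pm1$, and the signs $+,-,+,-$ drop out of a direct evaluation of $\iota_{\gamma_0,\beta}$ together with the multiplicities of the elementary paths involved. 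The hard part will be exactly this last step: carrying out in full the finitely branching enumeration of the flows out of the facets of a critical $2$-face, checking that the surviving branches are precisely the ones listed, and controlling the signs and multiplicities throughout. One subtlety to watch is that the exceptional pairings of type~(\ref{wn910}) (those with $b\neq c$, i.e.\ with $c+1<m=d$) make the $2$-cell $\left[\begin{smallmatrix}a&a&c\\b&m&m\end{smallmatrix}\right]$ collapsible rather than critical, so these must be treated as relay points of the flow and not mistaken for critical termini.
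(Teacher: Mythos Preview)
Your overall strategy coincides with the paper's: enumerate, for each critical $2$-face $\beta=\left[\begin{smallmatrix}a&a&c\\b&d&d\end{smallmatrix}\right]$, the complete (finite) tree of mixed gradient paths out of its three facets, record the signed critical $1$-cells at which the surviving branches terminate, and dualize. The paper does precisely this bookkeeping in Figures~\ref{fig11}--\ref{fig33}, splitting into the subcases $c\le m-2\ge d$, $c=m-1$, and $d=m-1$; your observations that all three facets of $\beta$ are redundant and that elementary multiplicities are governed by column-position signs are exactly the ingredients used there. Your treatment of the (k.2) case is correct: the two contributing facets are indeed $\left[\begin{smallmatrix}a&a\\b&d\end{smallmatrix}\right]$ and $\left[\begin{smallmatrix}a&c\\b&d\end{smallmatrix}\right]$, each reaching $\left[\begin{smallmatrix}m&m\\b&d\end{smallmatrix}\right]$ along a unique upper path of multiplicity $+1$, and the initial incidences $+1,-1$ force cancellation.

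There is, however, a genuine gap in your handling of (k.3). The shortcut ``transposing the two coordinates gives the same conclusion'' does not work: the gradient field $W_m$ is built from the \emph{lexicographic} order on column-vertices $\genfrac{}{}{0pt}{1}{a}{b}$, which is not invariant under the row swap $\genfrac{}{}{0pt}{1}{a}{b}\mapsto\genfrac{}{}{0pt}{1}{b}{a}$. Concretely, the swap sends the critical $2$-face $\left[\begin{smallmatrix}a&a&c\\b&d&d\end{smallmatrix}\right]$ to the second-type $2$-face $\left[\begin{smallmatrix}b&d&d\\a&a&c\end{smallmatrix}\right]$, and by Proposition~\ref{Wn} \emph{every} second-type $2$-face is collapsible (items~(\ref{wn45}),~(\ref{wn6}),~(\ref{wn78})). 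So the swap does not carry mixed paths to mixed paths, nor critical cells to critical cells in dimension~$2$, and no symmetry argument at the Morse-complex level is available. The (k.3) case must be traced directly: in the paper's figures the two cancelling paths to $\left[\begin{smallmatrix}a&c\\m&m\end{smallmatrix}\right]$ emanate from the facets obtained by deleting the \emph{first} and \emph{second} columns of $\beta$ (Figures~\ref{fig11} and~\ref{fig12}), not the second and third as in the (k.2) case---so even the combinatorics is not the mirror image you expect. Once you replace the symmetry shortcut by this direct enumeration, your plan matches the paper's proof.
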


Note that the first two summations in~(\ref{cobory}) are empty when $a=m-1$ (so $b<m-2$).

\begin{proof}
The complete trees of mixed paths $\beta\searrow\,\nearrow\cdots\nearrow\;\searrow\alpha$ from critical 2-dimensional faces~$\beta$ to either critical or collapsible 1-dimensional faces~$\alpha$ are spelled out in Figures~\ref{fig11}--\ref{fig33}. In the figures, we indicate a positive (respectively negative) face with a bold (respectively regular) arrow. Types of pairings involved are indicated using the item names~\emph{(\ref{wn12})--(\ref{wnultimo})} in Proposition~\ref{Wn}. At the end of each branch, we indicate either the type of paring that shows $\alpha$ is collapsible or, if $\alpha$ is critical, the multiplicity with which the path must be accounted for in~(\ref{morseboundary}) and~(\ref{morsecoboundary}).

\smallskip
The first assertion in the proposition follows by observing that, in Figures~\ref{fig11}--\ref{fig33}, there are two mixed paths departing from a fixed critical 2-dimensional face and arriving to a fix critical 1-dimensional face of the form~\emph{(k.2)} or~\emph{(k.3)}. These two mixed paths have opposite multiplicities, so they cancel each other out in~(\ref{morsecoboundary}). For instance, each mixed path from
$\left[
\genfrac{}{}{0pt}{1}{a}{b}
\genfrac{}{}{0pt}{1}{a}{d}
\genfrac{}{}{0pt}{1}{c}{d}
\right]$
to
$\left[
\genfrac{}{}{0pt}{1}{a}{m}
\genfrac{}{}{0pt}{1}{c}{m}
\right]$
in Figure~\ref{fig11} cancels out with the corresponding path in Figure~\ref{fig12}.

\smallskip
To get at~(\ref{cobory}), start by noticing from Figures~\ref{fig11}--\ref{fig33} that there are only four types of mixed paths departing from a given critical 2-dimensional face $\left[
\genfrac{}{}{0pt}{1}{a}{b}
\genfrac{}{}{0pt}{1}{a}{d}
\genfrac{}{}{0pt}{1}{c}{d}
\right]$ and that arrive to some critical 1-dimensional faces of type~\emph{(k.1)}. Namely,
\begin{itemize}
\item there is a mixed path $\left[
\genfrac{}{}{0pt}{1}{a}{b}
\genfrac{}{}{0pt}{1}{a}{d}
\genfrac{}{}{0pt}{1}{c}{d}
\right]\searrow\nearrow\cdots\nearrow\searrow\left[
\genfrac{}{}{0pt}{1}{a}{d}
\genfrac{}{}{0pt}{1}{m-1}{m}
\right]$ having multiplicity $+1$ (Figures~\ref{fig11}, \ref{fig21} and~\ref{fig31});
\item there is a mixed path $\left[
\genfrac{}{}{0pt}{1}{a}{b}
\genfrac{}{}{0pt}{1}{a}{d}
\genfrac{}{}{0pt}{1}{c}{d}
\right]\searrow\nearrow\cdots\nearrow\searrow\left[
\genfrac{}{}{0pt}{1}{a}{b}
\genfrac{}{}{0pt}{1}{m-1}{m}
\right]$ having multiplicity $-1$ (Figures~\ref{fig12}, \ref{fig22} and~\ref{fig32});
\item there is a mixed path $\left[
\genfrac{}{}{0pt}{1}{a}{b}
\genfrac{}{}{0pt}{1}{a}{d}
\genfrac{}{}{0pt}{1}{c}{d}
\right]\searrow\nearrow\cdots\nearrow\searrow\left[
\genfrac{}{}{0pt}{1}{c}{b}
\genfrac{}{}{0pt}{1}{m-1}{m}
\right]$ having multiplicity $+1$ (Figures~\ref{fig12}, \ref{fig22} and~\ref{fig32});
\item there is a mixed path $\left[
\genfrac{}{}{0pt}{1}{a}{b}
\genfrac{}{}{0pt}{1}{a}{d}
\genfrac{}{}{0pt}{1}{c}{d}
\right]\searrow\nearrow\cdots\nearrow\searrow\left[
\genfrac{}{}{0pt}{1}{c}{d}
\genfrac{}{}{0pt}{1}{m-1}{m}
\right]$ having multiplicity $-1$ provided $(c,d)\neq(m-1,m-2)$ (Figures~\ref{fig11}, \ref{fig21} and~\ref{fig31}).
\end{itemize}
Therefore the value of the boundary map $\partial\colon \mu_2(C_m)\to\mu_1(C_m)$ at a critical face
$\left[
\genfrac{}{}{0pt}{1}{a}{b}
\genfrac{}{}{0pt}{1}{a}{d}
\genfrac{}{}{0pt}{1}{c}{d}
\right]$
($\hspace{.3mm}b\neq c<m>d$) with $(c,d)\neq(m-1,m-2)$ is
\begin{equation}\label{delta2}
\partial\left(\left[\begin{matrix}a&a&c\\b&d&d\end{matrix}\right]\right)=
\left[\begin{matrix}a&m-1\\d&m\end{matrix}\right]-
\left[\begin{matrix}a&m-1\\b&m\end{matrix}\right]+
\left[\begin{matrix}c&m-1\\b&m\end{matrix}\right]-
\left[\begin{matrix}c&m-1\\d&m\end{matrix}\right],
\end{equation}
whereas, for $(c,d)=(m-1,m-2)$,
\begin{equation}\label{delta3}
\partial\left(\left[\begin{matrix}a&a&m-1\\b&m-2&m-2\end{matrix}\right]\right)=
\left[\begin{matrix}a&m-1\\m-2&m\end{matrix}\right]-
\left[\begin{matrix}a&m-1\\b&m\end{matrix}\right]+
\left[\begin{matrix}m-1&m-1\\b&m\end{matrix}\right].
\end{equation}
(Note that the expression (\ref{delta2}) is valid when $(c,d)=(m-1,m-2)$ provided the fourth non-critical term
$$
\left[\begin{matrix}m-1&m-1\\m-2&m\end{matrix}\right]
$$
is omitted.) Expression~(\ref{cobory}) then follows by dualizing~(\ref{delta2}) and~(\ref{delta3}).
\end{proof}

\begin{figure}
\centering $\hspace{8.5mm}
\begin{cases} \searrow 
\md acdm \stackrel{(\ref{wn910})}\nearrow 
\mt aacdmm 
\begin{cases} \bp 
\md aadm \stackrel{(\ref{wn3})}\sp 
\mt aa{m-1}dmm 
\begin{cases} \searrow 
\md a{m-1}dm \emph{\ \ $(+)$}
\\ \bp 
\md a{m-1}mm\rule{0mm}{6mm} \emph{\ \ (\ref{wnpenultimo})}
\end{cases}
\\ \bp 
\md acmm \emph{\ \ $(+)$}
\end{cases}
\\ \bp 
\md ccdm \rule{0mm}{10mm}\stackrel{(\ref{wn3})}\sp 
\mt cc{m-1}dmm  
\begin{cases} \searrow 
\md c{m-1}dm \emph{\ \ $(-)$}
\\ \bp 
\md c{m-1}mm \rule{0mm}{6mm} \emph{\ \ (\ref{wnpenultimo})}
\end{cases}
\end{cases}$
\caption{Gradient paths evolving from $\mt aacbdd\pmb{\searrow}\md acdd \stackrel{(\ref{wn45})}{\pmb{\nearrow}}
\mt accddm$ for $b\neq c\leq m-2\geq d$}
\label{fig11}
\end{figure}

\begin{figure}
\centering $\rule{11.5mm}{0mm}
\begin{cases} \bp 
\md acbb \stackrel{(\ref{wn45})}\sp 
\mt accbbm 
\begin{cases} \searrow 
\md acbm \stackrel{(\ref{wn910})}\nearrow 
\mt aacbmm
\begin{cases} \bp 
\md aabm \stackrel{(\ref{wn3})}\sp
\mt aa{m-1}bmm
\begin{cases} \searrow
\md a{m-1}bm \emph{\ \ $(-)$} \\ \bp 
\md a{m-1}mm \emph{\ \ (\ref{wnpenultimo})} \rule{0mm}{6mm} 
\end{cases}
\\ \bp 
\md acmm \rule{0mm}{6mm} \emph{\ \ $(-)$}
\end{cases}
\\ \bp 
\md ccbm \stackrel{(\ref{wn3})}\sp
\mt cc{m-1}bmm
\begin{cases}\searrow 
\md c{m-1}bm \emph{\ \ $(+)$} \rule{0mm}{6mm} \\ \bp
\md c{m-1}mm \emph{\ \ (\ref{wnpenultimo})}\rule{0mm}{6mm}
\end{cases}
\end{cases}
\\ \bp 
\md ccbd \rule{0mm}{10mm}\stackrel{(\ref{wn12})}\sp 
\mt ccmbdd  
\begin{cases} \searrow 
\md cmbd \stackrel{(\ref{wn78})}\nearrow
\mt cmmbbd
\begin{cases} \bp
\md cmbb \stackrel{(\ref{wn6})}\sp 
\mt cmmbb{m-1} 
\begin{cases} \searrow
\md cmb{m-1} \emph{\ \ (\ref{wn145})} \\ \bp 
\md mmb{m-1} \emph{\ \ (\ref{wn145})} \rule{0mm}{6mm}
\end{cases} \\ \bp
\md mmbd \emph{\ \ $(-)$}
 \end{cases}
\\ \bp 
\md cmdd \stackrel{(\ref{wn6})}\sp 
\mt cmmdd{m-1} 
\begin{cases} \searrow
\md cmd{m-1} \emph{\ \ (\ref{wn145})} \rule{0mm}{6mm} \\ \bp
\md mmd{m-1} \emph{\ \ (\ref{wn145})} \rule{0mm}{6mm} 
\end{cases}
\end{cases}
\end{cases}$
\caption{Gradient paths evolving from $\mt aacbdd \searrow
\md acbd \stackrel{(\ref{wn78})}\nearrow 
\mt accbbd$ for $b\neq c\leq m-2\geq d$}
\label{fig12}
\end{figure} 

\begin{figure}
\centering $\rule{9.5mm}{0mm}
\begin{cases} \searrow 
\md ambd \stackrel{(\ref{wn78})}\nearrow 
\mt ammbbd 
\begin{cases} \bp 
\md ambb \stackrel{(\ref{wn6})}\sp 
\mt ammbb{m-1}
\begin{cases} \searrow 
\md amb{m-1} \emph{\ \ (\ref{wn145})}
\\ \bp 
\md mmb{m-1}\rule{0mm}{6mm} \emph{\ \ (\ref{wn145})}
\end{cases}
\\ \bp 
\md mmbd \emph{\ \ $(+)$}
\end{cases}
\\ \bp 
\md amdd \rule{0mm}{10mm}\stackrel{(\ref{wn6})}\sp 
\mt ammdd{m-1}  
\begin{cases} \searrow 
\md amd{m-1} \emph{\ \ (\ref{wn145})}
\\ \bp 
\md mmd{m-1}\rule{0mm}{6mm} \emph{\ \ (\ref{wn145})}
\end{cases}
\end{cases}$
\caption{Gradient paths evolving from $\mt aacbdd \pmb{\searrow}
\md aabd \stackrel{(\ref{wn12})}{\pmb{\nearrow}}
\mt aambdd$ for $b\neq c\leq m-2\geq d$}
\label{fig13}
\end{figure}

\begin{figure}
\centering $\rule{1.5mm}{0mm}
\begin{cases} \searrow
\md a{m-1}dm \emph{\ \ $(+)$} \\ \bp
\md {m-1}{m-1}dm \emph{\ \ $\genfrac{}{}{0pt}{1}{(-),\text{ if }d\,<\,m-2}{(\ref{wnultimo}),\hspace{1.2mm}\text{ if }d\,=\,m-2}$} \rule{0mm}{6mm}
\end{cases}$
\caption{Gradient paths evolving from $\mt aa{m-1}bdd \pmb{\searrow}
\md a{m-1}dd \stackrel{(\ref{wn45})}{\pmb{\nearrow}}
\mt a{m-1}{m-1}ddm $ for $b\neq m-1>d$}
\label{fig21}
\end{figure}

\begin{figure}{\small
\centering $
\begin{cases} \bp 
\md a{m-1}bb \stackrel{(\ref{wn45})}\sp 
\mt a{m-1}{m-1}bbm 
\begin{cases} \searrow 
\md a{m-1}bm  \emph{\ \ $(-)$}
\\ \bp 
\md {m-1}{m-1}bm \emph{\ \ $(+)$} \rule{0mm}{6mm}
\end{cases}
\\ \rule{-.4mm}{10mm} \bp 
\md {m-1}{m-1}bd \rule{-1.3mm}{10mm}\stackrel{(\ref{wn12})}\sp 
\rule{-1mm}{10mm} \mt {m-1}{m-1}mbdd  
\rule{-1.4mm}{0mm}
\begin{cases} \hspace{-.7mm}\searrow \hspace{-1mm}
\md {m-1}mbd \hspace{-.9mm} \stackrel{(\ref{wn78})} \nearrow
\hspace{-.8mm}\mt {m-1}mmbbd \rule{-1.3mm}{0mm}
\begin{cases} \rule{-.5mm}{0mm} \bp \hspace{-.3mm}
\md {m-1}mbb \rule{-1.3mm}{0mm} \stackrel{(\ref{wn6})}\sp \rule{-1mm}{0mm} 
\mt {m-1}mmbb{m-1} \rule{-1.3mm}{0mm}
\begin{cases} \rule{-1.4mm}{0mm}\searrow
\hspace{-.6mm} \md {m-1}mb{m-1} \emph{\ \ (\ref{wn145})} 
\\ \rule{-.6mm}{0mm} \bp \hspace{-.3mm}
\md mmb{m-1} \emph{\ \ (\ref{wn145})} 
\rule{0mm}{6mm}
\end{cases} \\ \bp
\md mmbd \emph{\ \ $(-)$}
 \end{cases}
\\ \bp 
\md {m-1}mdd \stackrel{(\ref{wn6})}\sp 
\mt {m-1}mmdd{m-1} 
\begin{cases} \searrow
\md {m-1}md{m-1} \emph{\ \ (\ref{wn145})} \rule{0mm}{6mm} \\ \bp
\md mmd{m-1} \emph{\ \ (\ref{wn145})} \rule{0mm}{6mm} 
\end{cases}
\end{cases}
\end{cases}$
\caption{Gradient paths evolving from $\mt aa{m-1}bdd \searrow \md a{m-1}bd \stackrel{(\ref{wn78})}\nearrow \mt a{m-1}{m-1}bbd$ for $b\neq m-1>d$}
\label{fig22}
}\end{figure} 

\begin{figure}
\centering $\rule{9.5mm}{0mm}
\begin{cases} \searrow 
\md ambd \stackrel{(\ref{wn78})}\nearrow 
\mt ammbbd 
\begin{cases} \bp 
\md ambb \stackrel{(\ref{wn6})}\sp 
\mt ammbb{m-1}
\begin{cases} \searrow 
\md amb{m-1} \emph{\ \ (\ref{wn145})}
\\ \bp 
\md mmb{m-1}\rule{0mm}{6mm} \emph{\ \ (\ref{wn145})}
\end{cases}
\\ \bp 
\md mmbd \emph{\ \ $(+)$}
\end{cases}
\\ \bp 
\md amdd \rule{0mm}{10mm}\stackrel{(\ref{wn6})}\sp 
\mt ammdd{m-1}  
\begin{cases} \searrow 
\md amd{m-1} \emph{\ \ (\ref{wn145})}
\\ \bp 
\md mmd{m-1}\rule{0mm}{6mm} \emph{\ \ (\ref{wn145})}
\end{cases}
\end{cases}$
\caption{Gradient paths evolving from $\mt aa{m-1}bdd \pmb{\searrow}
\md aabd \stackrel{(\ref{wn12})}{\pmb{\nearrow}}
\mt aambdd$ for $b\neq m-1>d$}
\label{fig23}
\end{figure}

\begin{figure}
\centering $\hspace{8.5mm}
\begin{cases} \searrow 
\md ac{m-1}m \stackrel{(\ref{wn910})}\nearrow 
\mt aac{m-1}mm 
\begin{cases} \bp 
\md aa{m-1}m \stackrel{(\ref{wn3})}\sp 
\mt aa{m-1}{m-1}mm 
\begin{cases} \searrow 
\md a{m-1}{m-1}m \emph{\ \ $(+)$}
\\ \bp 
\md a{m-1}mm\rule{0mm}{6mm} \emph{\ \ (\ref{wnpenultimo})}
\end{cases}
\\ \bp 
\md acmm \emph{\ \ $(+)$}
\end{cases}
\\ \bp 
\md cc{m-1}m \rule{0mm}{10mm}\stackrel{(\ref{wn3})}\sp 
\mt cc{m-1}{m-1}mm  
\begin{cases} \searrow 
\md c{m-1}{m-1}m \emph{\ \ $(-)$}
\\ \bp 
\md c{m-1}mm \rule{0mm}{6mm} \emph{\ \ (\ref{wnpenultimo})}
\end{cases}
\end{cases}$
\caption{Gradient paths evolving from $\mt aacb{m-1}{m-1}\pmb{\searrow}\md ac{m-1}{m-1} \stackrel{(\ref{wn45})}{\pmb{\nearrow}}
\mt acc{m-1}{m-1}m$ for $b\neq c<m-1$}
\label{fig31}
\end{figure}

\begin{figure}
\centering $\rule{11.5mm}{0mm}
\begin{cases} \bp 
\md acbb \stackrel{(\ref{wn45})}\sp 
\mt accbbm 
\begin{cases} \searrow 
\md acbm \stackrel{(\ref{wn910})}\nearrow 
\mt aacbmm
\begin{cases} \bp 
\md aabm \stackrel{(\ref{wn3})}\sp
\mt aa{m-1}bmm
\begin{cases} \searrow
\md a{m-1}bm \emph{\ \ $(-)$} \\ \bp 
\md a{m-1}mm \emph{\ \ (\ref{wnpenultimo})} \rule{0mm}{6mm} 
\end{cases}
\\ \bp 
\md acmm \rule{0mm}{6mm} \emph{\ \ $(-)$}
\end{cases}
\\ \bp 
\md ccbm \stackrel{(\ref{wn3})}\sp
\mt cc{m-1}bmm
\begin{cases}\searrow 
\md c{m-1}bm \emph{\ \ $(+)$} \rule{0mm}{6mm} \\ \bp
\md c{m-1}mm \emph{\ \ (\ref{wnpenultimo})}\rule{0mm}{6mm}
\end{cases}
\end{cases}
\\ \bp 
\md ccb{m-1} \rule{0mm}{10mm}\stackrel{(\ref{wn12})}\sp 
\mt ccmb{m-1}{m-1}  
\begin{cases} \searrow 
\md cmb{m-1} \emph{\ \ (\ref{wn145})} \\ \bp 
\md cm{m-1}{m-1} \emph{\ \ (\ref{wn145})}\rule{0mm}{6mm}
\end{cases}
\end{cases}$
\caption{Gradient paths evolving from $\mt aacb{m-1}{m-1} \searrow
\md acb{m-1} \stackrel{(\ref{wn78})}\nearrow 
\mt accbb{m-1}$ for $b\neq c<m-1$}
\label{fig32}
\end{figure} 

\begin{figure}
\centering $\rule{1.5mm}{0mm}
\begin{cases} \searrow
\md amb{m-1} \emph{\ \ (\ref{wn145})} \\ \bp
\md am{m-1}{m-1} \emph{\ \ (\ref{wn145})} \rule{0mm}{6mm}
\end{cases}$
\caption{Gradient paths evolving from $\mt aacb{m-1}{m-1} \pmb{\searrow}
\md aab{m-1} \stackrel{(\ref{wn12})}{\pmb{\nearrow}}
\mt aamb{m-1}{m-1} $ for $b\neq c<m-1$}
\label{fig33}
\end{figure}

\begin{figure}
\centering
\begin{tikzpicture}[x=1cm,y=1cm]
\node at (.25,4.25) {\tiny $1$};
\node at (.75,4.25) {\tiny $2$};
\node at (1.25,4.25) {\tiny $3$};
\node at (1.75,4.25) {\tiny $4$};
\node at (2.55,4.25) {$\cdots$};
\node at (3.2,4.25) {\tiny $m{-}2$};
\node at (3.8,4.25) {\tiny $m{-}1$};
\node at (0,4.55) {\tiny $\longrightarrow$};
\node at (0,4.68) {\tiny $a$};
\node at (-.56,4) {\tiny $\xdownarrow[1.8mm]$};
\node at (-.66,4) {\tiny $b$};
\node at (-.25,3.75) {\tiny $1$};
\node at (-.25,3.25) {\tiny $2$};
\node at (-.25,2.75) {\tiny $3$};
\node at (-.25,2.05) {$\vdots$};
\node at (.5,2.05) {$(R_1)$};
\node at (3.53,3.725) {\scriptsize $(R_2)$};
\node at (1.25,3.25) {\tiny $(\hspace{-.3mm}R_3\hspace{-.2mm})$};
\node at (2.5,3.24) {\scriptsize $(R_4)$};
\node at (2.5,2.05) {$(R_5)$};
\node at (3.75,1.25) {\tiny $(\hspace{-.3mm}R_6\hspace{-.2mm})$};
\node at (3.25,.25) {\tiny $(\hspace{-.3mm}R_7\hspace{-.2mm})$};
\node at (-.4,1.25) {\tiny $m{-}3$};
\node at (-.4,.75) {\tiny $m{-}2$};
\node at (-.4,.25) {\tiny $m{-}1$};
\draw[very thick](0,0)--(0,4)--(4,4)--(4,1)--(3.5,1)--(3.5,0)--(0,0);
\draw[very thick](1,.5)--(1,3.5)--(3.5,3.5)--(3.5,.5)--(1,.5);
\draw[very thick] (1,3)--(3.5,3);
\draw[very thick] (1.5,3)--(1.5,4);
\draw[very thick] (3,0)--(3,.5)--(3.5,.5)--(3.5,0);
\draw[very thick] (3.5,1.5)--(4,1.5);
\draw[very thick] (3.5,1)--(4,1);
\draw[ultra thin] (0,3.5)--(4,3.5);
\draw[ultra thin] (4,1)--(4,0)--(3.5,0);
\draw[ultra thin] (0,3)--(4,3);
\draw[ultra thin] (0,2.5)--(4,2.5);
\draw[ultra thin] (0,1.5)--(4,1.5);
\draw[ultra thin] (0,1)--(4,1);
\draw[ultra thin] (0,.5)--(4,.5);
\draw[ultra thin] (.5,0)--(.5,4);
\draw[ultra thin] (1,0)--(1,4);
\draw[ultra thin] (1.5,0)--(1.5,4);
\draw[ultra thin] (2,0)--(2,4);
\draw[ultra thin] (3,0)--(3,4);
\draw[ultra thin] (3.5,0)--(3.5,4);
\fill[lightgray](0.0188,3.5)--(.0188,3.981)--(.5,3.981)--(.5,3.5)--(.0188,3.5);
\fill[lightgray](.5,3)--(0.5,3.5)--(.98,3.5)--(.98,3)--(.5,3);
\fill[lightgray](1.0188,2.5)--(1.0188,2.98)--(1.5,2.98)--(1.5,2.5)--(1.0188,2.5);
\fill[lightgray](3,.5)--(3,1)--(3.4795,1)--(3.4795,.52)--(3,.52);
\fill[lightgray](3.52,0)--(3.52,1-.02)--(3.502+.497,1-.02)--(3.502+.497,0)--(3.52,0);
\end{tikzpicture}
\caption{Defining regions for the basis elements $\ma ab$}
\label{newbasisdim1}
\end{figure}

\subsection{Cohomology bases}\label{subsec4.3}
By Corollary~\ref{caso4}, we can assume $m\geq5$ throughout the rest of the paper. We start by identifying (in Corollary~\ref{basedim1} below) an explicit basis for $H^1(\C(|K_m|,2))$, i.e., for the kernel of the Morse coboundary $\delta\colon\mu^1(C_m)\to\mu^2(C_m)$. By Proposition~\ref{delta1}, it is enough to focus on the submodule $\mu^1_0(C_m)$ of $\mu^1(C_m)$ generated by the duals of the basis elements of type~\emph{(k.1)}. Thus, $\mu^1_0(C_m)$ is free on elements $\mb ab$ satisfying $a<m>b\neq a$ and $(a,b)\neq(m-1,m-2)$, where $\mb ab$ stands for the dual of $\md a{m-1}bm$.

\begin{definition}
Consider the elements $\ma ab\in\mu^1_0(C_m)$ defined for $a<m>b\neq a$ and $(a,b)\neq(m-1,m-2)$ according to the following cases (see Figure~\ref{newbasisdim1}):
\begin{enumerate}
\item[$(R_1)$] For $1\leq a\leq 2$, or for $1\leq a\leq m-3$ with $b=m-1$, or for $(a,b)=(3,1)$, $$\ma ab:=\sum_{a\neq j\leq b}\mb aj.$$
\item[$(R_2)$] For $4\leq a\leq m-1$ with $b=1$, or for $a=m-1$ with $1\leq b\leq m-4$, $$\ma ab:=\sum_{b\neq i\leq a}\mb ib.$$
\item[$(R_3)$] $\ma 32:=\mb 32+\mb 31+\mb 23+\mb 21+\mb 13+\mb 12.$
\item[$(R_4)$] For $4\leq a\leq m-2$, $$\ma a2:=\sum_{2\neq i\leq a}\mb i2+\sum_{i\leq a-1}\mb ia.$$
\item[$(R_5)$] For $a,b\in\{3,4,\ldots,m-2\}$, $$\ma ab:=\sum_{j\neq i\leq a\neq j\leq b}\mb ij+\sum_{i\leq a-1}\mb ia.$$
\item[$(R_6)$] $\ma {m-1}{m-3}:=\mb {m-1}{m-3}-\sum_{i,j}\mb ij,$ where the sum runs over $i$ and $j$ with $m-3\neq j\neq i\leq m-2\geq j$.
\item[$(R_7)$] $\ma {m-2}{m-1}:=\mb {m-2}{m-1}-\sum_{i,j}\mb ij,$ where the sum runs over $i$ and $j$ with $m-2\geq j\neq i\leq m-3$.
\end{enumerate}
\end{definition}

Direct inspection of the defining formul\ae\ yields:
\begin{lemma}\label{aux1} The following relations hold under the indicated conditions:
\begin{enumerate}[(i)]
\item $\displaystyle\ma ab-\ma a{b-1}=\sum_{b\neq i\leq a}\mb ib$, provided $3\leq a\leq m-2$ and $4\leq b\leq m-2$ with $a\neq b\neq a+1$.
\item $\displaystyle\ma a3-\ma a2-\ma a1=\sum_{3\neq i\leq a}\mb i3$, provided $4\leq a\leq m-2$.
\item $\displaystyle\ma {b-1}b-\ma {b-1}{b-2}=\sum_{i<b}\mb ib$, provided $5\leq b\leq m-2$.
\item $\displaystyle\ma 34 -\ma 32=\sum_{i\leq 3}\mb i4$, provided $m\geq6$.
%\item $\displaystyle\ma ab-\ma {a-1}b=\sum_{a\neq j\leq b}\mb aj$, provided $4\leq a<b\leq m-2$.
\end{enumerate}
\end{lemma}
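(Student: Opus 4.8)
The plan is to verify each of the four identities by brute expansion. Every $\ma{a}{b}$ lives in the free $R$-module $\mu^1_0(C_m)$ with basis $\{\mb{a}{b}\}$, so an identity among such elements holds as soon as the two sides have equal basis coordinates. First I would, for each symbol of the form $\ma{a}{b}$ occurring in a given identity, pin down which of the seven defining cases $(R_1)$--$(R_7)$ applies; this is forced by the index ranges hypothesized in that part of the lemma, and one checks in each instance that exactly one case is applicable. After that, the verification is a mechanical cancellation.

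For (i): the hypotheses $3\le a\le m-2$, $4\le b\le m-2$, $a\neq b\neq a+1$ place both $\ma{a}{b}$ and $\ma{a}{b-1}$ under case $(R_5)$ — the excluded values $b=a,a+1$ are exactly the ones that would render $\ma{a}{b-1}$ undefined or push it into $(R_3)$/$(R_1)$, while $b\ge 4$ keeps it clear of $(R_4)$. Subtracting the two $(R_5)$-expressions, the tails $\sum_{i\le a-1}\mb{i}{a}$ cancel and the difference of the double sums retains precisely the terms with $j=b$, i.e.\ $\sum_{b\neq i\le a}\mb{i}{b}$.

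For (ii): with $4\le a\le m-2$ one has $\ma{a}{3}$ from $(R_5)$, $\ma{a}{2}$ from $(R_4)$, and $\ma{a}{1}$ from $(R_2)$; expanding, $\ma{a}{3}$ carries the inner blocks over $j=1,2,3$ together with $\sum_{i\le a-1}\mb{i}{a}$, subtracting $\ma{a}{2}$ deletes the $j=2$ block and that tail, subtracting $\ma{a}{1}$ deletes the $j=1$ block, and $\sum_{3\neq i\le a}\mb{i}{3}$ survives. For (iii): with $5\le b\le m-2$ both $\ma{b-1}{b}$ and $\ma{b-1}{b-2}$ come from $(R_5)$, the tails $\sum_{i\le b-2}\mb{i}{b-1}$ cancel, and the surviving $j=b$ slice is $\sum_{i\le b-1}\mb{i}{b}=\sum_{i<b}\mb{i}{b}$, the index $j=b-1$ being killed by the constraint $j\neq b-1=a$ present in both sums. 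For (iv): for $m\ge 6$ the element $\ma{3}{4}$ comes from $(R_5)$ and $\ma{3}{2}$ from $(R_3)$; writing out the finitely many terms of each and cancelling leaves $\mb{1}{4}+\mb{2}{4}+\mb{3}{4}=\sum_{i\le 3}\mb{i}{4}$.

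Along the way I would also check that the basis elements appearing on the right-hand sides are genuinely part of the basis of $\mu^1_0(C_m)$, i.e.\ that they satisfy $i<m$, $b<m$, $i\neq b$ and $(i,b)\neq(m-1,m-2)$; this is immediate since $i\le a\le m-2$ and $b\le m-2$ throughout. The only real obstacle is getting the case bookkeeping right at boundary values of the indices, but the exclusions built into the hypotheses of (i)--(iv) are tailored precisely so that a single defining case governs each symbol, after which nothing but routine arithmetic remains.
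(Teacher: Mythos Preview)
Your proposal is correct and matches the paper's own approach: the paper simply states that the lemma follows by ``direct inspection of the defining formul\ae,'' and your case-by-case expansion with cancellation is exactly that inspection written out. The only minor inaccuracy is the remark in part~(i) about $(R_3)/(R_1)$, since with $b\ge 4$ neither case can arise for $\ma{a}{b-1}$; but this does not affect the argument, as you correctly place every symbol under $(R_5)$ there.
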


\begin{proposition}\label{cambiodebasedim1}
The elements $\ma ab$ for $a<m>b\neq a$ and $(a,b)\neq(m-1,m-2)$ yield a basis of $\mu_0^1$.
\end{proposition}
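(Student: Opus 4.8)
The plan is to view the passage from the known basis $\{\mb ab\}$ of $\mu_0^1$ to the proposed family $\{\ma ab\}$ as a change of basis, and to prove that the corresponding square matrix is invertible over $R$. Let $S$ denote the index set of pairs $(a,b)$ with $a<m>b\neq a$ and $(a,b)\neq(m-1,m-2)$; the text recalls that $\{\mb ab\}_{(a,b)\in S}$ is a basis of $\mu_0^1$, and the elements $\ma ab$ are indexed by the very same set $S$. A first, routine step is to check that the seven defining cases $(R_1)$--$(R_7)$ partition $S$, so that each $\ma ab$ is unambiguously defined, and that every index pair occurring on the right-hand side of the defining formulas again lies in $S$; these are finite verifications using only $m\geq5$ and the inequalities built into the cases. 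Granting this, let $M$ be the $S\times S$ integer matrix whose $\big((a,b),(i,j)\big)$ entry is the coefficient of $\mb ij$ in $\ma ab$. It suffices to prove $\det M\in R^{\times}$, and in fact I will show $\det M=1$, so the statement holds over every commutative unital ring $R$ simultaneously.

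To compute $\det M$, order $S$ lexicographically, writing $(i,j)\vartriangleleft(a,b)$ when $i<a$, or $i=a$ and $j<b$. The claim is that, with rows and columns listed $\vartriangleleft$-increasingly, $M$ is lower triangular with every diagonal entry equal to $1$; then $\det M=1$. The diagonal part is immediate: in each of $(R_1)$--$(R_7)$ the term $\mb ab$ occurs exactly once and with coefficient $+1$ (as the $j=b$ term of $(R_1)$, the $i=a$ term of $(R_2)$ and $(R_4)$, the $i=a,\,j=b$ term of the first sum of $(R_5)$, and explicitly in $(R_3)$, $(R_6)$, $(R_7)$), while every coefficient appearing is $0$ or $\pm1$. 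The triangularity part says that every \emph{other} index pair $(i,j)$ occurring in $\ma ab$ satisfies $(i,j)\vartriangleleft(a,b)$, and this is the heart of the argument.

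That claim is checked case by case. In $(R_1)$ the extra terms are $\mb aj$ with $j<b$, so $(a,j)\vartriangleleft(a,b)$. In $(R_2)$, and likewise in $(R_3)$, $(R_6)$ and $(R_7)$, every extra term has first coordinate strictly smaller than $a$ (the non-diagonal summands of $(R_2)$ run over $i<a$; those of $(R_6)$ over $i\leq m-2<m-1$; those of $(R_7)$ over $i\leq m-3<m-2$; for $(R_3)$ one reads off the five listed pairs), so such a term precedes $(a,b)$ lexicographically. The only delicate point is in $(R_4)$ and $(R_5)$, whose defining sums also contain the terms $\mb ia$ with $i\leq a-1$: when $b<a$ these indices have second coordinate $a>b$, so the lexicographic comparison is \emph{not} settled by the second coordinate; but the first coordinate still drops strictly below $a$, so $(i,a)\vartriangleleft(a,b)$ all the same. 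The remaining ``first-sum'' terms $\mb ij$ of $(R_5)$ have $i\leq a$, with $i=a$ forcing $j<b$ --- the case $j=b$ being the diagonal and the case $j=a$ being excluded from the sum --- so $(i,j)\vartriangleleft(a,b)$; and the terms $\mb i2$ of $(R_4)$ other than the diagonal one have $i<a$. Hence $M$ is lower triangular with unit diagonal, $\det M=1$, and $\{\ma ab\}$ is therefore a basis of $\mu_0^1$. (Lemma~\ref{aux1} is not needed for this argument; it offers instead an inductive route in which the ``bulk'' elements of $(R_5)$ are built from the ``boundary'' ones via the stated difference formulas.)
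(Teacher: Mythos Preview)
Your argument is correct and is cleaner than the paper's own proof. The key observation---that in each of the seven defining formul\ae\ every off-diagonal term $\mb ij$ has either $i<a$, or $i=a$ and $j<b$---really does hold in all cases, including the ``interior'' cases $(R_4)$ and $(R_5)$; the second sums $\sum_{i\le a-1}\mb ia$ there have first coordinate strictly below~$a$, so the potentially large second coordinate~$a$ is irrelevant for the lexicographic comparison. Hence the transition matrix is lower unitriangular and invertible over any commutative unital ring, and Lemma~\ref{aux1} is indeed unnecessary for the bare statement of the proposition.

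The paper proceeds differently. Rather than exhibiting a single global ordering making the matrix triangular, it runs a back-substitution argument: from $\mb ab=\ma ab+\sum(\pm1)\mb{a'}{b'}$ it recursively expresses each $\mb ab$ in terms of the $\ma{a'}{b'}$. For regions $(R_1)$--$(R_3)$, $(R_6)$, $(R_7)$ this recursion is carried out directly, but for $(R_4)$ and $(R_5)$ the paper invokes the difference identities of Lemma~\ref{aux1} to produce explicit closed formulas such as $\mb ab=\big(\ma ab-\ma a{b-1}\big)-\big(\ma{a-1}b-\ma{a-1}{b-1}\big)$. What the paper's approach buys is these explicit inverse formulas, which can be convenient for later cocycle computations; what your approach buys is a uniform, case-free proof of invertibility that works over every coefficient ring at once and bypasses the auxiliary lemma entirely.
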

\begin{proof}
In view of the one-to-one correspondence $\mb ab\leftrightarrow\ma ab$, it suffices to check that
\begin{equation}\label{zlc}
\mbox{\emph{each $\mb ab$ is a $\mathbb{Z}$-linear combination of the elements $\ma {a'}{b'}$.}}
\end{equation}
In turn, the fact in~(\ref{zlc}) follows in most cases by a simple recursive argument based on the observation that, in all cases,
\begin{equation}\label{allcases}
\mb ab\,=\,\ma ab\,\,+\!\!\!\sum_{(a',b')\neq(a,b)} (\pm1)\cdot\mb {a'}{b'}.
\end{equation}
Namely, the recursive argument applies for ($R_1$) when $a\leq2$ or $b=1$, for ($R_2$) when $b=1$, and for ($R_3$). The recursive argument also applies in the cases ($R_6$) and ($R_7$), as well as in the remaining instances of ($R_1$) and ($R_2$) provided
\begin{equation}\label{etiprov}
\mbox{\emph{(\ref{zlc}) holds true when $a$ and $b$ fall in cases ($R_4$) and ($R_5$).}}
\end{equation}
Since the cases ($R_4$) and ($R_5$) are empty for $m=5$, we only need to verify~(\ref{etiprov}) assuming $m\geq6$.

Direct computation gives $\mb 42=\ma 42-\ma 34+\ma 31+\ma 23+\ma 13-\ma 12$, while items \emph{(iii)} and \emph{(iv)} of Lemma~\ref{aux1} yield $$\mb a2=\left(\ma a2-\ma {a-1}2\right)+\left(\ma {a-2}{a-1}-\ma {a-2}{a-3} \right)-\left(\ma {a-1}a-\ma {a-1}{a-2} \right) $$ for $5\leq a\leq m-2$, which establishes~(\ref{etiprov}) in the case of~($R_4$). The validity of~(\ref{etiprov}) in the case $a=3$ of ($R_5$) is established in a similar way: Note first that the idea in the recursive argument based on~(\ref{allcases}) works to give~(\ref{etiprov}) for $(a,b)=(3,4)$; then use item~\emph{(i)} in Lemma~\ref{aux1} to get
$$
\mb 3b=\left(\ma 3b-\ma 3{b-1}\right)-\ma 2b+\ma 2{b-1}-\ma 1b+\ma 1{b-1}
$$
for $5\leq b\leq m-2$. The validity of~(\ref{etiprov}) in the case $b=3$ of~($R_5$) is established using the idea in the recursive argument at the beginning of the proof, except that~(\ref{allcases}) is replaced by the expression
$$
\mb a3=\ma a3-\ma a2-\ma a1-\sum_{3\neq i<a}\mb i3
$$
coming from item~\emph{(ii)} in Lemma~\ref{aux1}. Lastly, the validity of~(\ref{etiprov}) in the remaining case $a,b\in\{4,\ldots,m-2\}$ of~($R_5$) is established by the formul\ae
\begin{itemize}
\item $\displaystyle\mb ab=\left(\ma ab-\ma a{b-1}\right)-\left(\ma {a-1}b-\ma {a-1}{b-1}\right)$, when $a+1\neq b\neq a-1$; %, in view of item~\emph{(i)} in Lemma~\ref{aux1};
\item $\displaystyle\mb ab=\left(\ma a{a-1}-\ma a{a-2}\right)-\left(\ma {a-2}{a-1}-\ma {a-2}{a-3}\right)$, when $b=a-1$ (so $a\geq5$); %, in view of items~\emph{(i)}, \emph{(iii)} and~\emph{(iv)} in Lemma~\ref{aux1};
\item $\displaystyle\mb ab=\left(\ma a{a+1}-\ma a{a-1}\right)-\left(\ma {a-1}{a+1}-\ma {a-1}a\right)$, when $b=a+1$ (so $a\leq m-3$), %in view of items~\emph{(i)} and~\emph{(iii)} in Lemma~\ref{aux1},
\end{itemize}
which use items~\emph{(i)}, \emph{(iii)} and~\emph{(iv)} in Lemma~\ref{aux1}.
\end{proof}

Recall that the Morse coboundary $\delta\colon\mu^0(C_m)\to\mu^1(C_m)$ is trivial, so that the 1-dimensional cohomology of $\C(|K_m|,2)$ is given by the kernel of $\delta\colon\mu^1(C_m)\to\mu^2(C_m)$.  
\begin{corollary}\label{basedim1}
A basis for $H^1(\C(|K_m|,2))$ is given by
\begin{enumerate}[(1)]
\item the duals of the critical 1-dimensional faces of type~(k.2) and~(k.3) in Proposition~\ref{Wn} and
\item the (already dualized) elements $\ma ab$ satisfying either $a=m-1$, or $b=m-1$, or $(a,b)=(m-2,m-3)$.
\end{enumerate}
\end{corollary}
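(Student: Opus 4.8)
The argument combines Proposition~\ref{delta1} and Proposition~\ref{cambiodebasedim1}. Since $\delta\colon\mu^0(C_m)\to\mu^1(C_m)$ vanishes, $H^1(\C(|K_m|,2))$ is exactly $\ker\bigl(\delta\colon\mu^1(C_m)\to\mu^2(C_m)\bigr)$, so the whole task is to pin down this kernel. Split $\mu^1(C_m)=\langle\text{(k.2), (k.3) duals}\rangle\oplus\mu^1_0(C_m)$. By the first assertion of Proposition~\ref{delta1} the coboundary annihilates the first summand, hence $\ker\delta=\langle\text{(k.2), (k.3) duals}\rangle\oplus\ker\bigl(\delta|_{\mu^1_0}\bigr)$, and it remains to show that $\ker(\delta|_{\mu^1_0})$ is $R$-free with basis the set $A$ of those $\ma ab$ having $a=m-1$, or $b=m-1$, or $(a,b)=(m-2,m-3)$. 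By Proposition~\ref{cambiodebasedim1} the family $\{\ma ab\}$ is a basis of $\mu^1_0$; write $B$ for its complement in this basis, so that $\mu^1_0=R\langle A\rangle\oplus R\langle B\rangle$.

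The proof then reduces to two computations: (i) $\delta\ma ab=0$ for every $(a,b)\in A$; and (ii) the restriction $\delta|_{R\langle B\rangle}$ is a split injection. Both are obtained by expanding $\ma ab$ via its defining formula $(R_1)$--$(R_7)$ and feeding the result into the coboundary formula~(\ref{cobory}) (equivalently, by dualizing~(\ref{delta2})--(\ref{delta3})). The point is that each $\ma ab$ is, by construction, a telescoping combination of the duals $\mb{a'}{b'}$ along the region of Figure~\ref{newbasisdim1} that labels it, so the four alternating sums appearing in~(\ref{cobory}) largely cancel: along the ``boundary'' regions (the top edge $a=m-1$, the right edge $b=m-1$, and the corner cell $(m-2,m-3)$) they cancel completely, giving (i), while off the boundary a controlled residue survives. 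For (ii) one fixes a total order on $B$ and, for each $(a,b)\in B$, selects a critical $2$-face $\sigma_{a,b}$ (of the type in~(\ref{wnl})) occurring in $\delta\ma ab$ with coefficient $\pm1$ and occurring with coefficient $0$ in $\delta\ma{a'}{b'}$ for every $(a',b')$ preceding $(a,b)$; the faces $\sigma_{a,b}$ being pairwise distinct, the matrix of $\delta|_{R\langle B\rangle}$ (in the basis $B$ of the source and the critical $2$-faces of the target) is in column-echelon form with unit pivots, hence a split injection over every $R$.

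Granting (i) and (ii): from (i), $R\langle A\rangle\subseteq\ker(\delta|_{\mu^1_0})$. Conversely, given $z\in\ker(\delta|_{\mu^1_0})$, write $z=z_A+z_B$ with $z_A\in R\langle A\rangle$ and $z_B\in R\langle B\rangle$; then $0=\delta z=\delta z_B$ by (i), so $z_B=0$ by (ii), whence $z\in R\langle A\rangle$. Therefore $\ker(\delta|_{\mu^1_0})=R\langle A\rangle$, which is free with basis $A$, and together with the (k.2), (k.3) duals this yields the asserted basis of $H^1(\C(|K_m|,2))$.

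The main obstacle is the bookkeeping behind (ii): one must track, separately for each of the region shapes $(R_1)$--$(R_7)$, exactly which critical $2$-faces survive the cancellations in~(\ref{cobory}), and then check that a coherent echelon pattern (pairwise distinct pivot faces $\sigma_{a,b}$, each with unit coefficient and untouched by the earlier basis vectors under a fixed order on $B$) genuinely emerges. Claim (i) is comparatively mechanical — a telescoping check, one boundary-region shape at a time — and one should additionally dispatch the degenerate range $m=5$, where the regions $(R_4)$ and $(R_5)$ are empty, on its own.
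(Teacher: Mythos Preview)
Your outline is sound, and step (i) is exactly what the paper does (with one representative case, $\ma{m-1}{m-3}$, written out in full). The genuine divergence is in how you handle the complement $B$. The paper does \emph{not} carry out your step (ii); instead it counts that $|A|+|\{\text{(k.2),(k.3) duals}\}|=(m-1)(m-2)$, quotes Farber--Hanbury \cite{MR2745669} for the first Betti number and for the torsion-freeness of $H_*(\C(|K_m|,2))$, and argues that a set of $(m-1)(m-2)$ cocycles that extends to a basis of $\mu^1(C_m)$ must already span the kernel (the quotient $\ker\delta/R\langle A\cup\text{(k.2),(k.3)}\rangle$ embeds in the free module $R\langle B\rangle$ and has rank zero, hence vanishes). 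Injectivity of $\delta$ on $R\langle B\rangle$ then falls out as a consequence rather than being checked.

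So your route trades an appeal to the literature for a direct echelon verification. That is legitimate and more self-contained, but be aware that the ``bookkeeping'' you flag in (ii) is the entire content of the argument once (i) is done, and you have not actually exhibited the ordering on $B$ or the pivot faces $\sigma_{a,b}$. The paper's shortcut lets it stop after (i); your version still owes the reader a concrete echelon pattern across all seven region types, and that is substantially more work than the single telescoping check the paper displays. If you want to keep your approach, you should at least produce the pivots for one or two of the larger regions (say $(R_5)$) to make the claim credible.
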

\begin{proof}
Recall $m\geq5$. A straightforward counting shows that the number of elements in~\emph{(1)} and~\emph{(2)} above is $(m-1)(m-2)$, which is also the first Beti number of $\C(|K_m|,2)$ ---cf.~\cite[Corollary~23]{MR2745669}). Since the homology of $\C(|K_m|,2)$ is torsion-free (\cite[Proposition~2]{MR2745669}), Proposition~\ref{cambiodebasedim1} implies that the proof will be complete once it is checked that $\delta\colon\mu^1(C_m)\to\mu^2(C_m)$ vanishes on each of the elements in~\emph{(1)} and~\emph{(2)}. Indeed, the latter fact actually implies that $\delta$ is injective on the submodule generated by the basis elements $\ma ab$ not included in~\emph{(2)}.

The vanishing of $\delta$ on the elements in~\emph{(1)} comes directly from Proposition~\ref{delta1}, whereas the vanishing of $\delta$ on the elements in~\emph{(2)} is verified by direct calculation using the expression of $\delta$ in Proposition~\ref{delta1}. The arithmetical manipulations needed are illustrated next in a representative case, namely, that of $\ma {m-1}{m-3}$.

Use Proposition~\ref{delta1} and the defining formula ($R_6$) to get
\begin{align*}
\delta\ma{m-1}{m-3}=&\sum_{x,y}\mt xx{m-1}{m-3}yy-\sum_{x,y}\mt xx{m-1}y{m-3}{m-3}-\\&\sum_{m-3\neq j\neq i\leq m-2\geq j}\left(\sum_{x,y}\mt iixyjj-\sum_{x,y}\mt iixjyy+\sum_{x,y}\mt xxijyy-\sum_{x,y}\mt xxiyjj\right).
\end{align*}
The summands with $y=m-1$ in the second inner summation cancel out the corresponding ones in the third inner summation. (The corresponding fact for $y<m-1$, dealt with below, is more subtle since $i\leq m-2$ in the third inner summation, while $x\leq m-1$ in the second inner summation.) Noticing in addition that $y=m-2$ is forced in the first outer summation, we then get
\begin{align*}
\delta\ma{m-1}{m-3}=&\sum_{x}\mt xx{m-1}{m-3}{m-2}{m-2}-\sum_{x,y}\mt xx{m-1}y{m-3}{m-3}-\\&\sum_{m-3\neq j\neq i\leq m-2\geq j}\left(\sum_{x,y}\mt iixyjj-\sum_{\genfrac{}{}{0pt}{1}{x}{y\leq m-2}}\mt iixjyy+\sum_{\genfrac{}{}{0pt}{1}{x}{y\leq m-2}}\mt xxijyy-\sum_{x,y}\mt xxiyjj\right).
\end{align*}
In the last expression, the summands with $x\leq m-2$ in the second inner summation cancel out the third inner summation, so
\begin{align*}
\delta\ma{m-1}{m-3}=&\sum_{x}\mt xx{m-1}{m-3}{m-2}{m-2}-\sum_{x,y}\mt xx{m-1}y{m-3}{m-3}-\\&\sum_{m-3\neq j\neq i\leq m-2\geq j}\left(\sum_{x,y}\mt iixyjj-\sum_{y\leq m-2}\mt ii{m-1}jyy-\sum_{x,y}\mt xxiyjj\right).
\end{align*}
Likewise, in the last expression, summands with $x\leq m-2$ in the first inner summation cancel out the third inner summation, so
\begin{align*}
\delta\ma{m-1}{m-3}=\sum_{x}\mt xx{m-1}{m-3}{m-2}{m-2}-\sum_{x,y}\mt xx{m-1}y{m-3}{m-3}\;\;-\hspace{-2mm}\sum_{m-3\neq j\neq i\leq m-2\geq j}\left(\sum_{y}\mt ii{m-1}yjj-\sum_{y\leq m-2}\mt ii{m-1}jyy\right).
\end{align*}
Lastly, merge the first (second, respectively) outer summation and the second (first, respectively) inner summation in the last expression to get
\begin{align*}
\delta\ma{m-1}{m-3}\;\,=\sum_{j\neq i\leq m-2\geq y}\mt ii{m-1}jyy\;\;-\sum_{j\neq i\leq m-2\geq j}\mt ii{m-1}yjj\;\,=\;\,0,
\end{align*}
as asserted.
\end{proof}

We next identify (in Corollary~\ref{bd2} below) an explicit basis for $H^2(\C(|K_m|,2))$, i.e., for the cokernel of the Morse coboundary $\delta\colon\mu^1(C_m)\to\mu^2(C_m)$. In what follows, the conditions $1\leq a<c<m$, $1\leq b<d<m$ and $c\neq d\neq a\neq b\neq c<m>d$ for critical faces $\mt aacbdd$ identified in item~\emph{(l)} of Proposition~\ref{Wn} will be implicit (and generally omitted). Let~$\mathcal{C}$ be the collection of the critical faces $\mt aacbdd$ of one of the following four types
\begin{align}
\mt 11c2dd\!, &\text{ with } c,d\in\{3,4,\ldots,m-1\}; \label{typ1} \\
\mt 1123dd\!, &\text{ with } d\in\{4,5,\ldots,m-1\}; \label{typ2} \\ 
\mt 22c133\!, &\text{ with } c\in\{4,5,\ldots,m-1\}; \label{typ3} \\ 
\mt 223144\!, \label{typ4}
\end{align}
and let $\mathcal{B}$ stand for the collection of all other critical faces $\mt aacbdd$. The following change of basis is used to show that the duals of critical faces in $\mathcal{B}$ form a basis of $H^2(\C(|K_m|,2))$:

\begin{definition}\label{cambiobasedim2}
For each $\mt aacbdd\in\mathcal{B}$, consider the element $\nb aacbdd\in\mu_2(C_m)$ defined through:
\begin{enumerate}
\item Case $a=1$ and $c\geq3$ with $\mt aacbdd$ not fitting in~$(\ref{typ1})$,
\begin{enumerate}
\item $\nb 11cbdd:=\mt 11cbdd - \mt 11c2dd + \mt 11c2bb$, for $b\geq3$;
\end{enumerate}
\item Case $a=1$ and $c=2$ with $\mt aacbdd$ not fitting in~$(\ref{typ2})$,
\begin{enumerate}\addtocounter{enumii}{1}
\item $\nb 112bdd:=\mt 112bdd - \mt 1123dd + \mt 1123bb$, for $b\geq4$;
\end{enumerate}
\item Case $a=2$, $b=1$ and $c\geq4$ with $\mt aacbdd$ not fitting in~$(\ref{typ3})$,
\begin{enumerate}\addtocounter{enumii}{2}
\item $\nb 22c1dd:=\mt 22c1dd - \mt 22c133 + \mt 1123dd - \mt 11c2dd + \mt 11c233$, for $d\geq4$;
\end{enumerate}
\item Case $a=2$, $b=1$ and $c=3$ with $\mt aacbdd$ not fitting in~$(\ref{typ4})$,
\begin{enumerate}\addtocounter{enumii}{3}
\item $\nb 2231dd:=\mt 2231dd - \mt 223144 + \mt 1123dd - \mt 112344 + \mt 113244 - \mt 1132dd$, for $d\geq5$;
\end{enumerate}
\item Case $a=2$ and $b\geq3$,
\begin{enumerate}\addtocounter{enumii}{4}
\item $\nb 22c3dd:=\mt 22c3dd + \mt 1123dd - \mt 11c2dd + \mt 11c233$;
\item $\nb 22cbdd:=\mt 22cbdd - \mt 1123bb + \mt 1123dd - \mt 11c2dd + \mt 11c2bb$, for $b\geq4$;
\end{enumerate}
\item Case $a=3$ and $b=1$,
\begin{enumerate}\addtocounter{enumii}{6}
\item $\nb 33c122:=\mt 33c122 + \mt 223144 - \mt 22c133 + \mt 112344 - \mt 113244 + \mt 11c233$;
\item $\nb 33c144:=\mt 33c144 + \mt 223144 - \mt 22c133 + \mt 112344 - \mt 11c244 + \mt 11c233$;
\item $\nb 33c1dd:=\mt 33c1dd + \mt 223144 - \mt 22c133 + \mt 112344 - \mt 11c2dd + \mt 11c233 - \mt 113244 + \mt 1132dd$, for $d\geq5$;
\end{enumerate}
\item Case $a=3$ and $b\geq2$,
\begin{enumerate}\addtocounter{enumii}{9}
\item $\nb 33c2dd:=\mt 33c2dd - \mt 11c2dd + \mt 1132dd$;
\item $\nb 33cbdd:=\mt 33cbdd - \mt 11c2dd + \mt 11c2bb - \mt 1132bb + \mt 1132dd$, for $b\geq4$;
\end{enumerate}
\item Case $a\geq4$,
\begin{enumerate}\addtocounter{enumii}{11}
\item $\nb aac122:=\mt aac122 - \mt 11a233 + \mt 11c233  - \mt 22c133 + \mt 22a133 $;
\item $\nb aac133:=\mt aac133 - \mt 22c133 + \mt 22a133$;
\item $\nb aac1dd:=\mt aac1dd - \mt 22c133 + \mt 22a133 - \mt 11c2dd + \mt 11c233 - \mt 11a233 + \mt 11a2dd$, for $d\geq4$;
\item $\nb aac2dd:=\mt aac2dd - \mt 11c2dd + \mt 11a2dd$;
\item $\nb aacbdd:=\mt aacbdd - \mt 11c2dd + \mt 11c2bb - \mt 11a2bb + \mt 11a2dd$, for $b\geq3$.
\end{enumerate}
\end{enumerate}
\end{definition}

Direct inspection shows that
\begin{equation}\label{dirins}
\mbox{\it each $\nb aacbdd - \mt aacbdd$ is a linear combination of basis elements in $\mathcal{C}$.}
\end{equation}
Therefore $\mathcal{B}'\cup\mathcal{C}$ is a new basis of $\mu_2(C_m)$, where $\mathcal{B}'$ stands for the collection of elements $\nb aacbdd$. Furthermore, routine verifications using~(\ref{delta2}) and~(\ref{delta3}) show that $\mathcal{B}'$ lies in the kernel of $\partial\colon\mu_2(C_m)\to\mu_1(C_m)$. In fact:

\begin{lemma}\label{eleskernelhomologia}
$\mathcal{B}'$ is a basis of the kernel of the Morse boundary map $\partial\colon \mu_2(C_m)\to \mu_1(C_m)$.
\end{lemma}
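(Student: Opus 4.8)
The plan is to leverage the change of basis recorded in~(\ref{dirins}). By that identity the passage from the basis $\mathcal{B}\cup\mathcal{C}$ of $\mu_2(C_m)$ to $\mathcal{B}'\cup\mathcal{C}$ is effected by an integral unitriangular matrix, hence $\mathcal{B}'\cup\mathcal{C}$ is again an $R$-basis of $\mu_2(C_m)$ for every coefficient ring $R$; in particular $\mathcal{B}'$ is $R$-linearly independent and $\mu_2(C_m)=\langle\mathcal{B}'\rangle\oplus\langle\mathcal{C}\rangle$. (Here and below we use that the $R$-Morse complex of $C_m$ is obtained from the integral Morse complex by extension of scalars, the boundary maps having integer matrices.) Since, as already noted via~(\ref{delta2}) and~(\ref{delta3}), $\mathcal{B}'$ lies in $\ker\partial$, and since $\partial$ annihilates $\langle\mathcal{B}'\rangle$, every $z\in\ker\partial$ written as $z=z'+z''$ with $z'\in\langle\mathcal{B}'\rangle$ and $z''\in\langle\mathcal{C}\rangle$ satisfies $\partial z''=0$. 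Thus the whole statement reduces to showing that $\partial$ restricts to an \emph{injection} on $\langle\mathcal{C}\rangle$, and that this injectivity persists after tensoring with an arbitrary~$R$.

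First I would treat the case $R=\mathbb{Z}$ by a rank count. As $C_m$ is $2$-dimensional, $\mu_3(C_m)=0$, so $\ker(\partial\colon\mu_2(C_m)\to\mu_1(C_m))=H_2(\C(|K_m|,2);\mathbb{Z})$, a free abelian group whose rank we denote by $b_2$. From the Euler characteristic relation $c_0-c_1+c_2=1-b_1+b_2$ (valid since $\C(|K_m|,2)$ is connected) and the first Betti number $b_1=(m-1)(m-2)$ (see~\cite[Corollary~23]{MR2745669} or the count in the proof of Corollary~\ref{basedim1}), one gets $b_2=c_2-c_1+b_1$. On the other hand a direct count of the four families~(\ref{typ1})--(\ref{typ4}) gives $|\mathcal{C}|=(m-3)(m-4)+2(m-4)+1=m^2-5m+5$, which by~(\ref{rangos}) is precisely $c_1-b_1$. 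Hence $\operatorname{rank}(\ker\partial)=c_2-|\mathcal{C}|=|\mathcal{B}|=|\mathcal{B}'|$. As $\langle\mathcal{B}'\rangle\subseteq\ker\partial$ is a direct summand of $\mu_2(C_m)$, it is also a direct summand of $\ker\partial$; having the same finite rank as $\ker\partial$, it must equal $\ker\partial$. In particular $\partial|_{\langle\mathcal{C}\rangle}$ is injective and $\operatorname{im}\partial=\partial\langle\mathcal{C}\rangle$ is free of rank $|\mathcal{C}|$.

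To pass to an arbitrary $R$, recall that $\delta\colon\mu^0(C_m)\to\mu^1(C_m)$ vanishes, so dually $\partial\colon\mu_1(C_m)\to\mu_0(C_m)$ vanishes; therefore $\mu_1(C_m)/\operatorname{im}\partial\cong H_1(\C(|K_m|,2);\mathbb{Z})$, which is torsion-free by~\cite[Proposition~2]{MR2745669}. Consequently $\operatorname{im}\partial$ is a direct summand of $\mu_1(C_m)$, and the short exact sequence $0\to\ker\partial\to\mu_2(C_m)\to\operatorname{im}\partial\to0$ of free abelian groups splits; applying $(-)\otimes_{\mathbb{Z}}R$ keeps it split exact and keeps the inclusion $\operatorname{im}\partial\hookrightarrow\mu_1(C_m)$ injective. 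It follows that $\ker\bigl(\partial\otimes_{\mathbb{Z}}R\bigr)=(\ker\partial)\otimes_{\mathbb{Z}}R=\langle\mathcal{B}'\rangle\otimes_{\mathbb{Z}}R$, i.e.\ $\mathcal{B}'$ is an $R$-basis of the kernel of the Morse boundary map, as asserted.

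The soft homological steps above are not where the work lies: the genuine labor is the case-by-case verification, via Definition~\ref{cambiobasedim2} and formul\ae~(\ref{delta2})--(\ref{delta3}), that every $\nb aacbdd$ is a $\partial$-cycle, which I would present only through a representative sample (as is done for $\delta\ma{m-1}{m-3}$ in the proof of Corollary~\ref{basedim1}). The one subtlety worth flagging in the plan is that the conclusion for an \emph{arbitrary} $R$ genuinely requires the torsion-freeness of $H_1(\C(|K_m|,2))$; without it, $\ker(\partial\otimes R)$ could be strictly larger than $\langle\mathcal{B}'\rangle\otimes R$ in positive characteristic.
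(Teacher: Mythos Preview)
Your proposal is correct and follows essentially the same approach as the paper: a cardinality/rank count using the known Betti numbers of $\C(|K_m|,2)$ to force $\partial$ to be injective on $\langle\mathcal{C}\rangle$. The only cosmetic differences are that the paper cites the second Betti number $b_2$ directly from~\cite[Corollary~23]{MR2745669}, whereas you recover it from $b_1$ and the Euler characteristic (an equivalent computation), and that you spell out the direct-summand argument and the passage to an arbitrary coefficient ring~$R$ via torsion-freeness of $H_1$, which the paper leaves implicit.
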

\begin{proof}
The argument is parallel to that in the proof of Corollary~\ref{basedim1}. Namely, by direct counting, the cardinality of $\mathcal{C}$ is $|\mathcal{C}|=m^2-5m+5$. In view of~(\ref{rangos}), this leads to
$$
|\mathcal{B}'|=|\mathcal{B}|=\frac{m(m-2)(m-3)(m-5)}{4}+1,
$$
which is the second Betti number of $\C(|K_m|,2)$ ---cf.~\cite[Corollary~23]{MR2745669}. Consequently, $\partial\colon \mu_2(C_m)\to \mu_1(C_m)$ is forced to be injective on the submodule spanned by $\mathcal{C}$ and, in particular, $\mathcal{B}'$ spans (and is thus a basis of) ker$(\partial\colon \mu_2(C_m)\to \mu_1(C_m))$.
\end{proof}

The two sequences
\begin{align}\label{notortion}\begin{gathered}
0\longrightarrow H_2(\C(|K_m|,2))\stackrel{\iota}{\longhookrightarrow}\mu_2(C_m)\stackrel{\partial}{\longrightarrow}\mu_1(C_m) \\
0\longleftarrow H^2(\C(|K_m|,2))\stackrel{\;\,\iota^*}{\longleftarrow}\mu^2(C_m)\stackrel{\delta}{\longleftarrow}\mu^1(C_m)
\end{gathered}\end{align}
are exact; the first one by definition, and the second one, which is the dual of the first one, because $H_1(\C(|K_m|,2))$ is torsion-free. Thus, the cohomology class represented by an element $e\in\mu^2(C_m)$ is given by the $\iota^*$-image of $e$. Such an interpretation of cohomology classes is used in the proof of:

\begin{corollary}\label{bd2}
A basis of $H^2(\C(|K_m|,2))$ is given by the classes represented by the duals of the critical faces in~$\mathcal{B}$. Furthermore, the expression (as a linear combination of basis elements) of the cohomology class represented by the dual of a critical face in $\mathcal{C}$ is obtained from equations $(E_i)$, $1\leq i\leq 6$, which are congruences modulo the image of $\delta\colon\mu^1(C_m)\to\mu^2(C_m)$.
\begin{itemize}
\item[$(E_1)$] $\displaystyle\;\;
\sum_{x} \mtt xxc233 -
\sum_{x} \mtt ccx233 \hspace{1mm} \equiv \hspace{-1mm} 
\sum_{\genfrac{}{}{0pt}{1}{y\neq 3}{z\in \{1,3\}}} 
\hspace{-2.5mm} \left(\hspace{1mm}
\sum_{x} \mtt xxczyy -
\sum_{x} \mtt ccxzyy \right)$, for $c>3$.

\item[$(E_2)$] $\displaystyle\;\;
\sum_{x} \mtt xx3244 -
\sum_{x} \mtt 33x244 \hspace{1mm} \equiv \hspace{-1mm} 
\sum_{\genfrac{}{}{0pt}{1}{y\neq 4}{z\in \{1,4\}}} 
\hspace{-2.5mm} \left(\hspace{1mm}
\sum_{x} \mtt xx3zyy -
\sum_{x} \mtt 33xzyy \right)$.

\item[$(E_3)$] $\displaystyle\;\;
\sum_{x,y} \mtt xxcydd -
\sum_{x,y} \mtt ccxydd \hspace{1mm} \equiv \hspace{1.5mm} 
\sum_{x,y} \mtt xxcdyy -
\sum_{x,y} \mtt ccxdyy$, for $3\leq c\neq d\geq4$ with $(c,d)\neq(3,4)$.

\item[$(E_4)$] $\displaystyle\;\;
\sum_{x,y} \mt xx2y44 \;-\hspace{-1.2mm}\sum_{\mbox{\tiny$(x,\!y){\neq}(3,\!1)$}} \!\mt 22xy44 \;\equiv\;
\left(\sum_{x,y} \mt xx24yy - \sum_{x,y} \mt 22x4yy\right)-
\left( \sum_{\mbox{\tiny$\genfrac{}{}{0pt}{1}{y>4}{x}$}} \mt xx31yy - \sum_{x,y} \mt 33x1yy\right).$

\item[$(E_5)$] $\displaystyle\;\;
\sum_{x,y} \mt xx2ydd -\sum_{x,y} \!\mt 22xydd \;\equiv\;
\sum_{x,y} \mt xx2dyy - \sum_{x,y} \mt 22xdyy$, for $d>4$.

\item[$(E_6)$] $\displaystyle\;\;
\sum_{x,y} \mt xxc1yy \;\equiv\;
\sum_{x,y} \mt ccx1yy$, for $c>2$.
\end{itemize}
\end{corollary}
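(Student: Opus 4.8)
The plan is to treat the two assertions separately, the basis statement first.

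For the basis statement, the exact sequences~(\ref{notortion}) identify $H^2(\C(|K_m|,2))$ with the cokernel of $\delta\colon\mu^1(C_m)\to\mu^2(C_m)$ via $\iota^{*}$, so that the class of $e\in\mu^2(C_m)$ is its residue modulo $\operatorname{im}(\delta)$; moreover $\operatorname{im}(\delta)=\ker(\iota^{*})$ is exactly the submodule of $\mu^2(C_m)$ consisting of the functionals vanishing on $\ker(\partial)=H_2(\C(|K_m|,2))\subseteq\mu_2(C_m)$. By Lemma~\ref{eleskernelhomologia}, $\mathcal{B}'$ is a basis of $\ker(\partial)$, while by~(\ref{dirins}) the passage from the standard basis $\mathcal{B}\cup\mathcal{C}$ of $\mu_2(C_m)$ to the basis $\mathcal{B}'\cup\mathcal{C}$ is unitriangular: it fixes $\mathcal{C}$ pointwise and adds to each member of $\mathcal{B}$ a $\mathbb{Z}$-linear combination of members of $\mathcal{C}$. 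The inverse-transpose being again unitriangular, the dual basis of $\mu^2(C_m)$ associated with $\mathcal{B}'\cup\mathcal{C}$ consists of the standard duals of the members of $\mathcal{B}$ (these being the duals of the corresponding elements of $\mathcal{B}'$), together with, for each $\gamma\in\mathcal{C}$, the standard dual of $\gamma$ minus a $\mathbb{Z}$-linear combination of standard duals of members of $\mathcal{B}$. Since the functionals vanishing on $\ker(\partial)=\langle\mathcal{B}'\rangle$ are precisely the span of these latter elements, the quotient $\mu^2(C_m)/\operatorname{im}(\delta)$ has for a basis the residues of the standard duals of the members of $\mathcal{B}$; this is the asserted basis of $H^2(\C(|K_m|,2))$.

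For the second assertion, fix a critical face $\gamma_0\in\mathcal{C}$. Its dual relative to $\mathcal{B}'\cup\mathcal{C}$ vanishes on $\ker(\partial)$, hence lies in $\operatorname{im}(\delta)$ and so represents the zero class; together with the description of that dual obtained in the previous paragraph, this shows that the class of the standard dual of $\gamma_0$ is the $\mathbb{Z}$-linear combination of the classes of the standard duals of members of $\mathcal{B}$ whose coefficients are read off directly from Definition~\ref{cambiobasedim2} (the coefficient of $\gamma_0$ in the corresponding element $\nb aacbdd$ of $\mathcal{B}'$). The equations $(E_1)$–$(E_6)$ repackage this into a workable recursion: each is a congruence $\delta(\xi)\equiv0$ modulo $\operatorname{im}(\delta)$ for an explicit $1$-cochain $\xi\in\mu^1(C_m)$, with $\delta(\xi)$ computed from the coboundary formula~(\ref{cobory}) after collecting the (many) cancellations — for instance $(E_6)$ for a given $c>2$ is, up to sign, the identity $\delta\bigl(\mb c1\bigr)\equiv0$ read off from~(\ref{cobory}) with second-row entry $1$, and $(E_3)$ is $\pm\delta\bigl(\mb cd\bigr)\equiv0$, whereas $(E_1),(E_2),(E_4),(E_5)$ involve short combinations of such cochains. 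One then checks, type by type over~(\ref{typ1})–(\ref{typ4}) and with the parameters fixed, that exactly one critical $2$-face occurring in a given $(E_i)$ lies in $\mathcal{C}$ — the one exhibited on its left-hand side — all remaining terms lying in $\mathcal{B}$ or being expressible through equations already solved; solving the resulting triangular system produces the class of every $\mathcal{C}$-dual, with $(E_1),(E_2),(E_3)$ disposing of the faces in~(\ref{typ1}), $(E_4),(E_5)$ of those in~(\ref{typ2}), and $(E_6)$ of those in~(\ref{typ3}) and~(\ref{typ4}).

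The conceptual content sits entirely in the first part; the main obstacle is the combinatorics of the second — extracting the closed forms $(E_i)$ from~(\ref{cobory}) by telescoping, and the type-by-type verification that precisely the displayed term of each $(E_i)$ falls outside $\mathcal{B}$. This is routine but lengthy bookkeeping, of the same nature as the computation $\delta\ma{m-1}{m-3}=0$ carried out in the proof of Corollary~\ref{basedim1}.
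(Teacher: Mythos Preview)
Your proof is correct. For the first assertion you unpack in detail what the paper compresses into a one-line citation of~(\ref{dirins}) and~(\ref{notortion}): the unitriangularity of the change of basis $\mathcal{B}\cup\mathcal{C}\to\mathcal{B}'\cup\mathcal{C}$ dualizes to show that the standard duals of the members of $\mathcal{B}$ descend to a basis of $\mu^2(C_m)/\operatorname{im}(\delta)$.

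For the second assertion you and the paper diverge on how the congruences $(E_i)$ are \emph{established}. The paper verifies each $(E_i)$ by checking that both sides evaluate identically on every basis element $\nb aacbdd$ of $\ker(\partial)$; in other words, the congruences are obtained as the dual formulation of the sixteen relations in Definition~\ref{cambiobasedim2}. You instead propose to exhibit each $(E_i)$ directly as an explicit coboundary via~(\ref{cobory}): your identifications of $(E_6)$ with $\delta\bigl(\mb c1\bigr)$ and of $(E_3)$ with $\pm\delta\bigl(\mb cd\bigr)$ are correct, and the remaining four are indeed short linear combinations of such coboundaries accessible by the telescoping you describe. Your route is more constructive in that it names a preimage in $\mu^1(C_m)$; the paper's route avoids the telescoping altogether by checking against the already-constructed basis $\mathcal{B}'$ of $\ker(\partial)$. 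Both reach the same conclusion at comparable cost.

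One minor point on the final step: your hedge ``all remaining terms lying in $\mathcal{B}$ or being expressible through equations already solved; solving the resulting triangular system'' is stronger than needed. As the paper records in Figure~\ref{esesi}, each $(E_i)$ contains exactly one term from $\mathcal{C}$ and all other terms already lie in $\mathcal{B}$, so no recursion occurs --- the system is diagonal, not merely triangular. This does not affect correctness but trims the bookkeeping you flag as the main obstacle.
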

Formally, $(E_3)$ and $(E_5)$ agree, while $(E_2)$ and $(E_1)$ are nearly identical. 
\begin{proof}
The first assertion follows from~(\ref{dirins}) and~(\ref{notortion}). For the second assertion, start by noting that the listed congruences are obtained by dualizing the 16 formul\ae\ (in Definition~\ref{cambiobasedim2}) that describe the inclusion~$\iota$. Indeed, the validness of the congruences is obtained by a straightforward verification (left as an exercise for the reader) of the fact that both sides of each congruence evaluate the same at each basis element $\nb aacbdd$. Furthermore, direct inspection shows that, in each equation $(E_i)$, there is a single summand~$s_i$ (spelled out in Figure~\ref{esesi}) that fails to come from~$\mathcal{B}$. Therefore $(E_i)$ can be thought of as expressing the cohomology class represented by~$s_i$ as a $\mathbb{Z}$-linear combination of basis elements. The second assertion of the corollary then follows by observing, from Figure~\ref{esesi}, that each element in $\mathcal{C}$ arises as one, and only one, of the special summands $s_i$.
\end{proof}

\begin{figure}
\centering\begin{tabular}{r|ccccccc}
$i$ & $1$ & $2$ & $3$ & $4$ & $5$ & $6$ & $6$ \\
$s_i$ & $\rule{0mm}{5mm}\mt 11c233$ & \;\,$\mt 113244$ & $\mt 11c2dd$ & $\mt 112344$ & \;\,$\mt 1123dd$ & \;\,$\mt 22c133$ & \;\,$\mt 223144$ \\
\rule{0mm}{5mm}type & (\ref{typ1}) & (\ref{typ1}) & (\ref{typ1}) & (\ref{typ2}) & (\ref{typ2}) & (\ref{typ3}) & (\ref{typ4}) \\
\rule{0mm}{5mm}restrictions & $c>3$ &  & $\genfrac{}{}{0pt}{1}{3\leq c\neq d\geq4}{(c,d)\neq(3,4)}$ & & $d>4$ & $c>3$ & $$
\end{tabular}
\caption{Elements coming from $\mathcal{C}$ in the congruences $(E_i)$ of Corollary~\ref{bd2}.}
\label{esesi}
\end{figure}

\subsection{Cohomology ring}
In previous sections we have described explicit cocycles in $\mu^*(C_m)$ representing basis elements in cohomology. We now make use of~(\ref{cupprod}) in order to assess the corresponding cup products ---both at the critical cochain level, as well as at the homology level. Since cup products in $C^*(C_m)$ are elementary (see Remark~\ref{eleprocup} below), the bulk of the work amounts to giving a (suitable) description of the cochain maps $\overline{\Phi}\colon\mu^*(C_m)\to C^*(C_m)$ and $\underline{\Phi}\colon C^*(C_m)\to \mu^*(C_m)$.

\begin{remark}\label{eleprocup}{\em
Recall that basis elements in $C^1(C_m)$ are given by the dualized 1-dimensional faces $\md acbd$. (As in earlier parts of the paper, upper stars for dualized elements are omitted, and arithmetical restrictions among the numbers assembling critical faces are ussually not written down.) From the usual formula for cup products in the simplicial setting, we see that the only non-trivial products in $C^*(C_m)$ have the form 
\begin{equation}\label{pncero}
\md aabd\smile\md acdd=\mt aacbdd\qquad \text{or}\qquad \md acbb \smile\md ccbd=\mt accbbd.
\end{equation}
(So every 2-face is uniquely a product of two 1-faces.) In particular, for the purposes of applying~(\ref{cupprod}), all basis elements $\md acbd$ with $a<c$ and $b<d$ can be ignored in the expression for $\overline{\Phi}$.
}\end{remark}

\begin{proposition}\label{fiarriba}
The values of the cochain map $\overline{\Phi}\colon\mu^1(C_m)\to C^1(C_m)$ on the basis elements (k.1)--(k.3) of Proposition~\ref{Wn} satisfy the following congruences modulo basis elements $\md acbd$ with $a<c$ and $b<d$$\hspace{.5mm}:$
\begin{enumerate}[(i)]
\item $\overline{\Phi}\left(\md a{m-1}bm\right)\equiv\md aabm+\sum\md axbb-\sum\md yabb$, where the first summation runs over integers $x\in\{1,\ldots,m-1\}$ with $a<x\neq b$, and the second summation runs over integers $y\in\{1,\ldots,m-1\}$ with $b\neq y<a$;
\item $\overline{\Phi}\left(\md mmbd\right)\equiv\sum\md xxbd$, where the summation runs over integers $x\in\{1,\ldots,m\}$ with $b\neq x\neq d$;
\item $\overline{\Phi}\left(\md acmm\right)\equiv\sum\md acyy$, where the summation runs over integers $y\in\{1,\ldots,m\}$ with $a\neq y\neq c$,
\end{enumerate}
\end{proposition}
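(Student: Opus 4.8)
The plan is to compute $\overline{\Phi}$ directly from its definition~(\ref{quasi-isomorphisms}): for a critical $1$-face $\alpha$, the coefficient of an arbitrary $1$-face $\beta$ in $\overline{\Phi}(\alpha)$ equals $\sum_{\lambda\in\overline{\Gamma}(\beta,\alpha)}\mu(\lambda)$, the multiplicity-counted number of upper gradient paths from $\beta$ to $\alpha$. The first and crucial simplification is that, by Remark~\ref{eleprocup}, we only need this coefficient when $\beta$ has one of the two ``product-relevant'' shapes $\md aabd$ or $\md acbb$; every other $1$-face $\md acbd$ with $a<c$ and $b<d$ sits in the submodule we are allowed to discard, which is exactly why the statement is phrased as a congruence --- and why the constant path at $\alpha$ produces a visible ``diagonal'' term precisely in cases (ii) and (iii), where $\alpha$ is itself product-relevant, but is silently absorbed in case (i), where $\alpha$ (for $a<m-1$) is not. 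Note also that the support of $\overline{\Phi}(\alpha)$ is contained in $\{\alpha\}$ together with the redundant $1$-faces, since the start of a non-constant upper path is always the bottom of a reversed edge.

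The engine is the following. Using Proposition~\ref{Wn}, a $1$-face is \emph{redundant} iff it is the source of one of the pairings~(\ref{wn12})--(\ref{wn910}), \emph{collapsible} iff it is the target of one of~(\ref{wn145})--(\ref{wnultimo}), and critical otherwise. An upper path ending at $\alpha$ is an alternating chain $\beta=\gamma_0\nearrow\delta_1\searrow\gamma_1\nearrow\cdots\nearrow\delta_n\searrow\gamma_n=\alpha$ in which every $\gamma_i$ with $i<n$ is redundant, $\gamma_i\nearrow\delta_{i+1}$ is its (unique) $W$-pairing, and $\gamma_{i+1}$ is one of the two facets of the collapsible $2$-face $\delta_{i+1}$ other than $\gamma_i$. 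Hence $\overline{\Phi}(\alpha)$ is obtained from the finite tree of such chains: start at $\alpha$, list the collapsible $2$-faces having $\alpha$ as a facet (together with their redundant partners $\gamma_{n-1}$), and iterate backward, branching at each redundant $\gamma_i$ over the collapsible $2$-faces that have $\gamma_i$ as a \emph{non-partner} facet, pruning a branch the moment it reaches a collapsible $1$-face, and recording the terminal $\beta$ --- weighted by the product of the elementary multiplicities $-\iota_{\gamma_{j-1},\delta_j}\iota_{\gamma_j,\delta_j}$ --- whenever it reaches a critical one. All incidence numbers are the $\pm1$ coefficients coming from the simplicial boundary applied to the explicit $2$-faces in~(\ref{matrix-type-notation}), so each multiplicity is a short sign computation; and the bookkeeping is organized exactly as in Figures~\ref{fig11}--\ref{fig33}, which display the companion tree computation for $\partial/\delta$, the only difference being that here the trees are grown from product-relevant $1$-faces rather than from critical $2$-faces.

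Carrying this out: in case (i) the collapsible $2$-faces reaching $\alpha=\md a{m-1}bm$ are the target of~(\ref{wn3}), with redundant partner $\md aabm$, and the target of~(\ref{wn45}) with $c=m-1$, with redundant partner $\md a{m-1}bb$; tracing backward from these one successively meets various of the pairing types~(\ref{wn12})--(\ref{wn6}) and eventually bottoms out at collapsible $1$-faces via~(\ref{wn145}), and after collecting the surviving terminals of product-relevant shape one is left precisely with $\md aabm$, the family $\sum_{a<x\neq b}\md axbb$ and the family $-\sum_{b\neq y<a}\md yabb$. Cases (ii) and (iii) are handled by the same tree method starting from $\md mmbd$ and $\md acmm$, and yield the single-summation answers. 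I expect essentially all of the work, and the only place genuine care is required, to be: (a) verifying that the backward trees are complete --- that no collapsible $2$-face adjacent to some $\gamma_i$ is overlooked, and that no branch is counted twice --- and (b) checking, under the various arithmetic side conditions on $a$ and $b$ (in particular the degenerate situations $a=m-1$ and $b=m-1$, where some of the summations become empty and the diagonal term migrates into a sum), both that branches terminate at the claimed collapsible or critical faces and that the accumulated signs come out as stated. Everything else is the routine sign-chasing already modeled in Figures~\ref{fig11}--\ref{fig33}.
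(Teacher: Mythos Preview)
Your overall approach---trace the backward tree of upper gradient paths landing on each critical $1$-face and read off the coefficients---is exactly what the paper does (Figures~\ref{fige3}--\ref{fige2}). But your description of \emph{how} to read $\overline{\Phi}(\alpha)$ from that tree is wrong in a way that matters. You say to ``prune a branch the moment it reaches a collapsible $1$-face'' and to ``record the terminal $\beta$ \ldots\ whenever it reaches a critical one.'' Neither event can occur: in the backward tree every non-root $1$-face node is obtained as the redundant partner of a collapsible $2$-face, so it is \emph{always redundant}, never collapsible or critical. And you must not record only terminals: by~(\ref{quasi-isomorphisms}), $\overline{\Phi}(\alpha)=\sum_\beta\big(\sum_{\lambda\in\overline{\Gamma}(\beta,\alpha)}\mu(\lambda)\big)\beta$ with $\beta$ ranging over \emph{all} $1$-faces, so every $1$-face node in the tree (at every depth, not just the leaves) contributes a summand $\pm\beta$. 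Your own claimed output for case~(i) already exposes the inconsistency: $\md aabm$ is not a terminal (the tree continues backward from it, cf.\ Figure~\ref{fige4}) and is certainly not critical, yet it must appear in the answer.

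Similarly, branches do not ``bottom out at collapsible $1$-faces via~(\ref{wn145})''; they terminate when a redundant node has no collapsible $2$-coface other than its own $W$-partner (the nodes the paper marks ``(free)''), or when a given $2$-coface is critical and so allows no backward extension through it. Once you fix the reading rule---record every $1$-face node with its accumulated sign, then discard those of product-irrelevant shape---your plan coincides with the paper's proof and the three congruences fall out directly from the figures.
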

\begin{proof}
The congruences follow from~(\ref{quasi-isomorphisms}) and from direct inspection of Figures~\ref{fige3}--\ref{fige2}, where we spell out the complete trees of gradient paths landing on critical 1-dimensional faces. Here we follow the notational conventions used in Figures~\ref{fig11}--\ref{fig33}, except that we now keep track of relevant numerical restrictions and, at the start of each path, we indicate the reason that prevents the path from pulling back one further step.
\end{proof}

\begin{figure}
\centering $
\stackrel{\raisebox{1mm}{\tiny$(b{<}m{-}2)$}}{\md {m-1}{m-1}bm} \lbp 
\stackrel{\raisebox{1mm}{\tiny$(b{\neq}y{<}m-1)$}}{\mt y{m-1}{m-1}bbm} 
\stackrel{(\ref{wn45})} \lsp \md y{m-1}bb 
\begin{cases}
\lbp \stackrel{\mbox{\tiny $(b{<}f{<}m{-}1)$}}{\mt y{m-1}{m-1}bbf} \stackrel{(\ref{wn78})}\nwarrow
\md y{m-1}bf \swarrow \stackrel{\mbox{\tiny$(y{\neq}f)$}}{\mt yy{m-1}bff} \;\;\text{(critical)}
\\
\lbp \stackrel{\raisebox{1mm}{\tiny $(y{\neq}z{<}b)$}}{\mt yy{m-1}zbb}\rule{0mm}{9mm}
 \text{\;\;(critical)}
\end{cases}
$
\caption{Gradient paths landing on a critical cell $\md {m-1}{m-1}bm$ of type \emph{(k.1)} with $b<m-2$}
\label{fige3}
\end{figure}

\begin{figure}
\centering $
\md a{m-1}bm \!
\begin{cases}\!
\swarrow \hspace{-1mm}\mt aa{m-1}bmm\! \stackrel{(\ref{wn3})}\lsp
\!\md aabm \!
\begin{cases}\!
\lbp \stackrel{\raisebox{1mm}{\tiny$(b{\neq}y{<}a)$}}{\mt yaabbm} \stackrel{(\ref{wn45})}\lsp
\md yabb\!
\begin{cases}\!
\lbp\hspace{-3mm}
\stackrel{\raisebox{1mm}{\tiny$(y{\neq}w{<}b, w{\neq}a)$}}{\mt yyawbb} \text{(critical)} \\\!
\lbp\rule{0mm}{9mm}\stackrel{\raisebox{1mm}{\tiny$(a{<}b)$}}{\mt yyaabb} \stackrel{(\ref{wn910})}\nwarrow
\md yaab \;\;\text{(free)} \\ 
\lbp \hspace{-3mm}\rule{0mm}{9mm}\stackrel{\raisebox{1mm}{\tiny$(b{<}z{<}m, z{\neq}a)\,$}}{\mt yaabbz} \stackrel{(\ref{wn78})}\nwarrow
\md yabz 
\swarrow \stackrel{\raisebox{1mm}{\tiny$(y{\neq}z)$}}{\mt yyabzz} \;\;\text{(critical)}
\end{cases}
\\ \!\lbp \!\!\!\stackrel{\raisebox{1mm}{\tiny$(a{<}x{<}m{-}1)$}}{\mt aaxbmm} \stackrel{(\ref{wn910})}{\nwarrow}
\!\!\md axbm \!\!\swarrow \stackrel{\raisebox{1mm}{\tiny$(x{\neq}b)$}}{\mt axxbbm} \stackrel{(\ref{wn45})}\lsp 
\!\!\md axbb \!\rule{-.5mm}{19mm}
\begin{cases}\!
\lbp \hspace{-3.5mm}\stackrel{\raisebox{1mm}{\tiny$\!(b{<}r{<}m,r{\neq}x)$}}{\mt axxbbr} 
\stackrel{(\ref{wn78})}\nwarrow\!\! \md axbr \!
\swarrow \stackrel{\raisebox{1mm}{\tiny$(a{\neq}r)$}}{\mt aaxbrr} \;\text{(critical)} \\ \hspace{-.6mm}
\lbp \hspace{-3mm}
\rule{0mm}{9mm}\stackrel{\raisebox{1mm}{\tiny$(a{\neq}s{<}b,s{\neq}x)$}}{\mt aaxsbb} \;\text{(critical)} \\
\lbp \hspace{-.5mm}\rule{0mm}{9mm}\stackrel{\raisebox{1mm}{\tiny$(x{<}b)$}}{\mt aaxxbb} \stackrel{(\ref{wn910})}\nwarrow
\md axxb \;\;\text{(free)}
\end{cases}
\end{cases}
\\\!
\swarrow\!\rule{0mm}{14mm} \stackrel{\raisebox{1mm}{\tiny$(b{<}m{-}1)$}}{\mt a{m-1}{m-1}bbm} \stackrel{(\ref{wn45})}\lsp
\md a{m-1}bb
\begin{cases}
\lbp \stackrel{\raisebox{1mm}{\tiny$(b{<}x{<}m{-}1)$}}{\mt a{m-1}{m-1}bbx} \stackrel{(\ref{wn78})}\nwarrow\!
\md a{m-1}bx 
\swarrow \stackrel{\raisebox{1mm}{\tiny$(x{\neq}a)$}}{\mt aa{m-1}bxx} \;\;\text{(critical)}
\\
\rule{0mm}{10mm}\lbp \stackrel{\raisebox{.9mm}{\tiny$(a{\neq}y{<}b)$}}{\mt aa{m-1}ybb} \;\;\text{(critical)}
\end{cases}
 \end{cases}
$
\caption{Gradient paths landing on a critical cell $\md a{m-1}bm$ of type \emph{(k.1)} with $a<m-1\geq b$}
\label{fige4}
\end{figure}

\begin{figure}
\centering $
\stackrel{\raisebox{1mm}{\tiny$(d{<}m{-}1)$}}{\md mmbd} \lbp 
\stackrel{\raisebox{1mm}{\tiny$(b{\neq}x{<}m)$}}{\mt xmmbbd} \stackrel{(\ref{wn78})} \nwarrow
\md xmbd \swarrow \stackrel{\raisebox{1mm}{\tiny$(x{\neq}d)$}}{\mt xxmbdd} \stackrel{(\ref{wn12})} \lsp
\md xxbd 
\begin{cases}
\lbp \hspace{-4.5mm}\stackrel{\raisebox{1mm}{\tiny$(x{<}y{<}m,b{\neq}y{\neq}d)$}}{\mt xxybdd} 
\text{(critical)} \\
\lbp \rule{0mm}{10mm}\stackrel{\raisebox{1mm}{\tiny$(x{<}b)$}}{\mt xxbbdd} \stackrel{(\ref{wn910})} \nwarrow
\md xbbd \;\;\text{(free)}
\\
\lbp \rule{0mm}{10mm} \stackrel{\raisebox{1mm}{\tiny$(b{\neq}z{<}x)$}}{\mt zxxbbd} \stackrel{(\ref{wn78})} \nwarrow
\md zxbd \swarrow \stackrel{\raisebox{1mm}{\tiny$(d{\neq}z)$}}{\mt zzxbdd} \;\;\text{(critical)}
\end{cases}
$
\caption{Gradient paths landing on a critical cell $\md mmbd$ of type \emph{(k.2)}}
\label{fige1}
\end{figure}

\begin{figure}
\centering $
\stackrel{\raisebox{1mm}{\tiny$(c{<}m{-}1)$}}{\md acmm} \lbp 
\stackrel{\raisebox{1mm}{\tiny$(a{\neq}y{<}m)$}}{\mt aacymm} \stackrel{(\ref{wn910})} \nwarrow
\md acym \swarrow \stackrel{\raisebox{1mm}{\tiny$(c{\neq}y)$}}{\mt accyym} \stackrel{(\ref{wn45})} \lsp
\md acyy 
\begin{cases}
\lbp \hspace{-3mm}\stackrel{\raisebox{1mm}{\tiny$(y{<}z{<}m,z{\neq}c)$}}{\mt accyyz} \stackrel{(\ref{wn78})} \nwarrow
\md acyz \swarrow \stackrel{\raisebox{1mm}{\tiny$(a{\neq}z)$}}{\mt aacyzz} \;\;\text{(critical)}
\\
\lbp \hspace{-2.4mm}\rule{0mm}{10mm}\stackrel{\raisebox{1mm}{\tiny$(a{\neq}z{<}y,z{\neq}c)$}}{\mt aaczyy}  \text{(critical)}
\\
\lbp \rule{0mm}{10mm}\stackrel{\raisebox{1mm}{\tiny$(c{<}y)$}}{\mt aaccyy} \stackrel{(\ref{wn910})} \nwarrow
\md accy \;\; \text{(free)}
\end{cases}
$
\caption{Gradient paths landing on a critical cell $\md acmm$ of type \emph{(k.3)}}
\label{fige2}
\end{figure}

\begin{proposition}\label{fiabajo}
For critical 1-faces $x$ and $y$, the product $\overline{\Phi}(x)\smile\overline{\Phi}(y)$ appearing in~(\ref{cupprod}) is a linear combination $\sum\pm c$ of dualized 2-faces $c$ each of which has one of the following forms:
\begin{enumerate}[(i)]
\item $\mt aacbdd$ for $a<c<m>d>b$ and $b\neq a\neq d\neq c$, with trivial $\underline{\Phi}$-image unless $b\neq c$, in which case
$$\underline{\Phi}\left(\mt aacbdd\right)=\mt aacbdd.$$
\item $\mt accbbd$ for $a<c\leq m-1>d>b$ and $a\neq b\neq c\neq d$, with trivial $\underline{\Phi}$-image unless $a\neq d$, in which case
$$\underline{\Phi}\left(\mt accbbd\right)=-\mt aacbdd.$$
\item $\mt aacbmm$ for $a<c<m-1\geq b$ and $b\neq a\neq m\neq c$, with trivial $\underline{\Phi}$-image unless $b\neq c$, in which case
$$\underline{\Phi}\left(\mt aacbmm\right)=
\displaystyle\sum_{\genfrac{}{}{0pt}{1}{y<b}{a\neq y\neq c}}\mt aacybb\;\,-
\sum_{\genfrac{}{}{0pt}{1}{b<x<m}{a\neq x\neq c}}\mt aacbxx.
$$
\item $\mt accbbm$ for $a<c\leq m-1\geq b$, $a\neq b\neq c$ and either $c<m-1$ or $c=m-1>b+1$, 
with $\underline{\Phi}$-image $$\underline{\Phi}\left(\mt accbbm\right)\;=
\displaystyle \sum_{\genfrac{}{}{0pt}{1}{b<x<m}{a\neq x\neq c}}\mt aacbxx
\;\,-\sum_{\genfrac{}{}{0pt}{1}{y<b}{a\neq y\neq c}}\mt aacybb.$$
\end{enumerate}
\end{proposition}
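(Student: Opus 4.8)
The plan is to feed the description of $\overline{\Phi}$ from Proposition~\ref{fiarriba} into the elementary cup-product formula~(\ref{pncero}) and then to evaluate $\underline{\Phi}$ through its defining gradient-path formula in~(\ref{quasi-isomorphisms}). For the first assertion, note that by Remark~\ref{eleprocup} a $1$-cochain can occur as a left (respectively right) factor of a nontrivial simplicial product only if its two columns share their top or their bottom entry, and the three expansions in Proposition~\ref{fiarriba} consist exactly of such cochains once the ignorable summands $\md acbd$ with $a<c$, $b<d$ are discarded. I would then run through~(\ref{pncero}) for each of the (at most) six unordered pairs of types among (k.1)--(k.3): in every case the matching of columns needed to get a nonzero product is forced, and a direct check shows the resulting dualized $2$-faces are precisely those of the four shapes $\mt aacbdd$, $\mt accbbd$, $\mt aacbmm$, $\mt accbbm$, subject to the distinctness and ``bottom entry $=m$'' side conditions listed in (i)--(iv). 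This part is finite and mechanical.

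For $\underline{\Phi}$ on the first two shapes, the key observation is that since $\dim C_m=2$, every $2$-face is either critical or collapsible and none is redundant; hence any lower gradient path ending at a $2$-face $\beta$ starts at a critical $2$-face, passes through collapsible $2$-cells, and its last edge is $\delta\nearrow\beta$ with $(\delta,\beta)\in W_m$. For shape (i): if $b\neq c$ then $\mt aacbdd$ is a critical face of type~\emph{(\ref{wnl})} in Proposition~\ref{Wn}, so only the constant path survives and $\underline{\Phi}$ fixes it; if $b=c$ then $\mt aacbdd$ is collapsible with redundant partner $\md accd$, and $\md accd$ is a facet of no $2$-face other than $\mt aacbdd$ itself, so no lower path reaches it and its $\underline{\Phi}$-image vanishes. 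The same ``dead-end partner'' phenomenon disposes of shape (ii) when $a=d$; and when $a\neq d$ the partner of $\mt accbbd$ is $\md acbd$, whose only other coface is the \emph{critical} face $\mt aacbdd$, so there is a single elementary lower path and computing the two incidence numbers with the lexicographically fixed orientations gives $\underline{\Phi}(\mt accbbd)=-\mt aacbdd$.

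The main work, and the step I expect to be the real obstacle, is $\underline{\Phi}$ on shapes (iii) and (iv): here the redundant partners are $\md acbm$ (via pairing~\emph{(\ref{wn910})}) and $\md acbb$ (via pairing~\emph{(\ref{wn45})}), and the backward gradient flow genuinely branches through several layers of collapsible $2$-faces before stabilising on critical faces of type~\emph{(\ref{wnl})}. I would handle this exactly as Proposition~\ref{delta1} is handled in the paper: draw the complete trees of lower gradient paths from critical $2$-faces to $\mt aacbmm$ and to $\mt accbbm$ (the $\underline{\Phi}$-analogue of Figures~\ref{fig11}--\ref{fig33}), annotate each edge with its incidence number and each branch with the multiplicity $\mu(\lambda)$, and read off $\underline{\Phi}$ from~(\ref{quasi-isomorphisms}). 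Along the way every branch landing on a non-critical or ``ignorable'' $1$-face should pair off against another and cancel, leaving exactly the critical endpoints $\mt aacybb$ (with $y<b$, $a\neq y\neq c$) with sign $+1$ and $\mt aacbxx$ (with $b<x<m$, $a\neq x\neq c$) with sign $-1$, which is the asserted formula. The delicate points are checking that the branching terminates and that the sign and cancellation pattern come out precisely as stated; once the trees are laid out this is routine, if lengthy, verification.
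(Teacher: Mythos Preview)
Your approach is correct and essentially matches the paper's: the first assertion is obtained by feeding Proposition~\ref{fiarriba} into~(\ref{pncero}) and bounding the possible shapes, and the $\underline{\Phi}$-values are read off from lower gradient paths via~(\ref{quasi-isomorphisms}). Your direct ``dead-end partner'' arguments for shapes~(i) and~(ii) are a nice explicit version of what the paper leaves implicit.

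The one efficiency you miss is that no \emph{new} trees need to be drawn for shapes~(iii) and~(iv): the paper simply reuses Figures~\ref{fig11}--\ref{fig33}. Those figures list all mixed paths $\beta\searrow\gamma_0\nearrow\beta_1\searrow\cdots$ from critical $2$-faces $\beta$, and truncating any such path at an intermediate collapsible $2$-face $\beta_i$ yields precisely a lower path in $\underline{\Gamma}(\beta,\beta_i)$. So to compute $\underline{\Phi}\!\left(\mt aacbmm\right)$ or $\underline{\Phi}\!\left(\mt accbbm\right)$ one just locates every occurrence of that $2$-face in the existing figures and reads the accumulated sign back to the root. (Your phrasing ``branches landing on non-critical $1$-faces should pair off'' belongs to the mixed-path/boundary picture rather than to $\underline{\Phi}$ on $2$-faces; in the lower-path picture the relevant endpoints are always $2$-faces, and the computation terminates once the truncation hits the target.) This is purely an organisational shortcut, not a different argument.
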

\begin{proof}
By~(\ref{pncero}), the only 1-faces in the expression of $\overline{\Phi}(\delta)$ that can lead to a summand $\pm\mt rrtsuu$ in the product $\overline{\Phi}(\gamma)\smile\overline{\Phi}(\delta)$ have the form $\pm\md rtuu$. From the expressions of $\overline{\Phi}$ in Proposition~\ref{fiarriba}, this can hold only with $t<m$ and in fact $t<m-1$ whenever $u=m$, in view of the form of the basis elements of type \emph{(k.3)}. So $\mt rrtsuu$ fits either \emph{(i)} or \emph{(iii)}. Likewise, the only 1-faces in the expression of $\overline{\Phi}(\delta)$ that can lead to a summand $\pm\mt rttssu$ in $\overline{\Phi}(\gamma)\smile\overline{\Phi}(\delta)$ have the form $\pm\md ttsu$, which can hold only under one of the following conditions:
\begin{itemize}
\item $u=m\,$ and $\,t\leq m-1$, as well as $s<m-2$ if $t=m-1$ (recall the form of basis elements of type \emph{(k.1)});
\item $u<m-1$ (recall the form of basis elements of type \emph{(k.2)}).
\end{itemize}
In the former possibility, $\mt rttssu$ fits \emph{(iv)}. In the latter possibility, $\mt rttssu$ fits \emph{(ii)} unless $t=m$, in which case
\begin{equation}\label{inspeccion}
\mbox{\em the expression of $\overline{\Phi}(\gamma)$ should include a summand of the form $\pm\md rmss$.}
\end{equation}
But inspection of the expressions of $\overline{\Phi}$ in Proposition~\ref{fiarriba} rules out~(\ref{inspeccion}).

Lastly, the four asserted expressions for the cochain map $\underline{\Phi}$ follow from~(\ref{quasi-isomorphisms}) and the analysis of gradient paths in Figures~\ref{fig11}--\ref{fig33}.
\end{proof}

The Morse theoretic cup product $\stackrel{\mu}\smile$ in~(\ref{cupprod}) is fully determined by Propositions~\ref{fiarriba} and~\ref{fiabajo} together~(\ref{pncero}). The resulting description is spelled out in the Ph.D.~thesis of the first named author. Here we only remark that, in terms of our explicit cocyle representatives for basis elements (Corollaries~\ref{basedim1} and~\ref{bd2}), the cohomological cup product can simply be read off from~(\ref{cupprod}) using the full power of Corollary~\ref{bd2}. Indeed, as explained in the proof of the latter corollary, congruences ($E_1$)--($E_6$) allow us to identify the cohomology class of any linear combination of critical 2-faces arising from~(\ref{cupprod}). This renders an explicit, complete and fully computer-implementable description of the cohomology ring $H^*(\C(|K_m|,2))$.

\begin{example}\label{sigma6}{\em
For $m=5$, Corollaries~\ref{basedim1} and~\ref{bd2} render the following cocycles representing a graded basis for $H^*(\C(|K_m|,2))$. In dimension 2 there is the single cocycle $\mt 334122$ while, in dimension 1, there are the twelf cocycles
\begin{align*}
&\delta_{12}:=\md 5512, \hspace{64mm} \upsilon_{12}:=\md 1255, \\
&\delta_{13}:=\md 5513, \hspace{64mm} \upsilon_{13}:=\md 1355, \\
&\delta_{23}:=\md 5523, \hspace{64mm} \upsilon_{23}:=\md 2355,\\
&\lambda_{14}:=\md 1445+ \md 1435+\md 1425,\hspace{40mm} \lambda_{24}:=\md 2445+ \md 2435+\md 2415, \\
&\lambda_{41}:=\md 4415+ \md 3415+\md 2415, \hspace{40mm} \lambda_{42}:=\md 4425- \md 3415-\md 2435-\md 2415-\md 1435, \\
& \lambda_{32}:=\md 3425+ \md 3415+\md 2435+\md 2415+\md 1435+\md 1425, \hspace{5mm}
\lambda_{34}:=\md 3445- \md 2435-\md 2415 -\md 1435-\md 1425.
\end{align*}
The matrix of products in cohomology is

\medskip\centerline{\begin{tabular}{r|rrrrrrrrrrrr}
 & $\lambda_{41}$ & $\lambda_{42}$ & $\lambda_{14}$ & $\lambda_{24}$ & $\lambda_{34}$ & $\lambda_{32}$ & $\delta_{12}$ & $\delta_{13}$ & $\delta_{23}$ & $\upsilon_{12}$ & $\upsilon_{13}$ & $\upsilon_{23}$ \\ \hline \rule{0mm}{4mm}
$\lambda_{41}$ & & & & & & & & & & & & $-g$ \\ \rule{0mm}{4mm}
$\lambda_{42}$ & & & & & & & $g$ & $-g$ & $g$ & $g$ & & $g$ \\ \rule{0mm}{4mm}
$\lambda_{14}$ & & & & & & & & & $-g$ & & & \\ \rule{0mm}{4mm}
$\lambda_{24}$ & & & & & & & & $g$ & & & & \\ \rule{0mm}{4mm}
$\lambda_{34}$ & & & & & & & & $-g$ & $g$ & $g$ & $-g$ & $g$ \\ \rule{0mm}{4mm}
$\lambda_{32}$ & & & & & & & $-g$ & $g$ & $-g$ & $-g$ & $g$ & $-g$\\  \rule{0mm}{4mm}
$\delta_{12}$ & & $-g$ & & & & $g$ & & & & & & \\ \rule{0mm}{4mm}
$\delta_{13}$ & & $g$ & & $-g$ & $g$ & $-g$ & & & & & & \\ \rule{0mm}{4mm}
$\delta_{23}$ & & $-g$ & $g$ & & $-g$ & $g$ & & & & & & \\ \rule{0mm}{4mm}
$\upsilon_{12}$ & & $-g$ & & & $-g$ & $g$ & & & & & & \\ \rule{0mm}{4mm}
$\upsilon_{13}$ & & & & & $g$ & $-g$ & & & & & & \\ \rule{0mm}{4mm}
$\upsilon_{23}$ & $g$ & $-g$ & & & $-g$ & $g$ & & & & & & \end{tabular}}

\medskip\noindent
where $g$ stands for the generator of $H^2(\C(|K_m|,2))$, zeros are not shown, and brackets for cohomology classes are omitted. In particular, replacing $\lambda_{42}$ by $\lambda_{42}':=\lambda_{42}+\lambda_{41}+\lambda_{34}+\lambda_{32}+\lambda_{24}+\lambda_{14}$, $\lambda_{34}$ by $\lambda_{34}':=\lambda_{34}+\lambda_{32}$ and $\lambda_{32}$ by $\lambda_{32}':=\lambda_{32}+\lambda_{42}$,
%\begin{itemize}
%\item $\lambda_{42}$ by $\lambda_{42}':=\lambda_{42}+\lambda_{41}+\lambda_{34}+\lambda_{32}+\lambda_{24}+\lambda_{14}$,
%\item $\lambda_{34}$ by $\lambda_{34}':=\lambda_{34}+\lambda_{32}$ and 
%\item $\lambda_{32}$ by $\lambda_{32}':=\lambda_{32}+\lambda_{42}$,
%\end{itemize}
we get a cohomology basis whose only\footnote{Up to anticommutativity.} non-trivial products are
\begin{equation}\label{agradable}
\lambda'_{42}\smile \upsilon_{12}=\lambda_{24}\smile \delta_{13}=\lambda'_{32}\smile \upsilon_{13}=g\qquad\mbox{and}\qquad 
\lambda_{14}\smile \delta_{23}=\lambda_{41}\smile \upsilon_{23}=\lambda'_{34}\smile \delta_{12}=-g.
\end{equation}
This reflects the well known fact that $\C(|K_5|,2)$ is, up to homotopy, a closed orientable surface of genus~6.
}\end{example}

The cohomology ring $H^*(\C(|K_m,2|))$ becomes richer as $m$ increases (with $\C(|K_m|,2)$ no longer being a homotopy closed surface). Yet, some aspects of the particularly simple structure in~(\ref{agradable}) are kept for all $m>5$. Explicitly, let $\upsilon_{a,c}$, $\delta_{b,d}$ and $\lambda_{e,f}$ stand for the basis elements of $H^1(\C(|K_m,2|))$ represented, respectively, by the 1-cocycles $\md acmm$, $\md mmbd$ and $\ma ef$ described in Corollary~\ref{basedim1}.

\begin{corollary}\label{preservados}
Any cup product of the form $\hspace{.3mm}\delta_{b_1,d_1}\hspace{-.6mm}\cdot\delta_{b_2,d_2}$, $\hspace{.1mm}\upsilon_{a_1,c_1}\hspace{-.8mm}\cdot\hspace{-.2mm}\upsilon_{a_2,c_2}$ or $\hspace{.4mm}\lambda_{e_1,f_1}\hspace{-.6mm}\cdot\hspace{-.2mm}\lambda_{e_2,f_2}$ vanishes. On the other hand, a cup product $\delta_{b,d}\cdot\upsilon_{a,c}$ is nonzero if and only if $\{a,b\}\cap\{c,d\}=\varnothing$, in which case $\delta_{b,d}\cdot\upsilon_{a,c}$ is represented by $\mt aacbdd$.
\end{corollary}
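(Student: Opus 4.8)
The plan is to exploit the explicit cochain descriptions in Propositions~\ref{fiarriba} and~\ref{fiabajo} together with the cup-product formula~(\ref{cupprod}) and the congruences $(E_1)$--$(E_6)$ of Corollary~\ref{bd2}. First I would record the relevant $\overline{\Phi}$-images: by Proposition~\ref{fiarriba}, modulo basis elements $\md acbd$ with $a<c$ and $b<d$ (which are irrelevant for cup products by Remark~\ref{eleprocup}), we have $\overline{\Phi}(\md acmm)\equiv\sum_x\md acxx$ (over $a\neq x\neq c$) and $\overline{\Phi}(\md mmbd)\equiv\sum_x\md xxbd$ (over $b\neq x\neq d$), while $\overline{\Phi}(\ma ef)$ is a $\mathbb{Z}$-linear combination of terms $\md g{m-1}hm$, hence of the normalized forms in Proposition~\ref{fiarriba}(i). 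The heart of the argument is then to compute, via~(\ref{pncero}), the simplicial products of these expressions and to apply $\underline{\Phi}$.

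For the three ``same-type'' products I would argue as follows. A product $\upsilon_{a_1,c_1}\cdot\upsilon_{a_2,c_2}$ is represented by $\underline{\Phi}(\overline{\Phi}(\md{a_1}{c_1}mm)\smile\overline{\Phi}(\md{a_2}{c_2}mm))$; but by~(\ref{pncero}) a nonzero simplicial product $\md rtuu\smile\md r't'u'u'$ requires $t=r'$ and $u=u'$, and since each summand of $\overline{\Phi}(\md acmm)$ has equal second entry $=m$ in both columns (the ``$mm$'' on the right), inspecting~(\ref{pncero}) shows no pairing of summands can produce a 2-face unless one factor contributes a $\md rmss$-type term with $s<m$ — which Proposition~\ref{fiarriba} never produces. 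Hence $\overline{\Phi}(\md{a_1}{c_1}mm)\smile\overline{\Phi}(\md{a_2}{c_2}mm)=0$ already at the cochain level. The same mechanism kills $\delta\cdot\delta$ products (each summand of $\overline{\Phi}(\md mmbd)$ has equal first entry $=m$, again incompatible with~(\ref{pncero})), and for $\lambda\cdot\lambda$ I would combine Proposition~\ref{fiabajo}, which shows $\overline{\Phi}(\ma{e_1}{f_1})\smile\overline{\Phi}(\ma{e_2}{f_2})$ is a combination of 2-faces of types (i)--(iv) there, with the observation that the $\underline{\Phi}$-images of these, after summing over the symmetric index sets coming from Proposition~\ref{fiarriba}(i), telescope to zero — this is exactly the kind of cancellation already carried out in the proof of Corollary~\ref{bd2} for $\delta\ma{m-1}{m-3}$, and I would invoke that style of bookkeeping rather than redo it.

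For the mixed product $\delta_{b,d}\cdot\upsilon_{a,c}$, I would compute $\overline{\Phi}(\md mmbd)\smile\overline{\Phi}(\md acmm)=\left(\sum_x\md xxbd\right)\smile\left(\sum_y\md acyy\right)$ and use~(\ref{pncero}): the factor $\md xxbd\smile\md acyy$ is nonzero only when the ``glue'' conditions of~(\ref{pncero}) hold, namely $\md aabd\smile\md acdd=\mt aacbdd$ after matching $x=a$, $y=d$; this single surviving term contributes $\mt aacbdd$ provided it is an admissible 2-face, i.e., provided $b\neq a\neq d\neq c$ and $b\neq c$, equivalently $\{a,b\}\cap\{c,d\}=\varnothing$ (the inequalities $a<c$, $b<d$, $c<m>d$ being automatic from the shapes of the basis 1-cocycles). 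When this fails, every term in the simplicial product is zero (or inadmissible), so the product vanishes. When it holds, Proposition~\ref{fiabajo}(i) gives $\underline{\Phi}(\mt aacbdd)=\mt aacbdd$ since $b\neq c$, and it remains to check this critical 2-cocycle is not a coboundary: if $\mt aacbdd\in\mathcal{B}$ we are done by Corollary~\ref{bd2}, and if it lies in $\mathcal{C}$ then the relevant congruence $(E_i)$ exhibits it as a nonzero basis combination. The main obstacle I anticipate is the $\lambda\cdot\lambda$ vanishing: $\overline{\Phi}(\ma ef)$ is the most complicated of the three $\overline{\Phi}$-images, involving the region-dependent formulas $(R_1)$--$(R_7)$, so verifying that the resulting sum of (i)--(iv)-type 2-faces has zero $\underline{\Phi}$-image will require the same careful, region-by-region telescoping as in Corollary~\ref{bd2}; everything else reduces to the rigid constraint~(\ref{pncero}) on simplicial cup products.
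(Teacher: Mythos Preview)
Your treatment of the $\delta\cdot\delta$, $\upsilon\cdot\upsilon$, and nonzero $\delta\cdot\upsilon$ products is essentially the paper's own argument: read off $\overline{\Phi}$ from Proposition~\ref{fiarriba}, use the rigid gluing constraints~(\ref{pncero}), and then invoke Proposition~\ref{fiabajo}(i) and the congruences $(E_i)$ to identify the surviving class. One small slip: when $b=c$ (with $a\notin\{b,d\}$ and $c\neq d$) the single surviving simplicial term $\mt aacbdd$ \emph{is} a genuine $2$-face of $C_m$, so $\overline{\Phi}(\delta)\smile\overline{\Phi}(\upsilon)$ is not literally zero; what saves you is that this face is non-critical and Proposition~\ref{fiabajo}(i) gives $\underline{\Phi}\mt aacbdd=0$ precisely because $b=c$. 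The paper handles this case instead by reversing the order of factors (computing $\upsilon\cdot\delta$), which kills the product already at the simplicial level.

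The real gap is in your $\lambda\cdot\lambda$ argument. You propose to let $\overline{\Phi}(\ma{e_1}{f_1})\smile\overline{\Phi}(\ma{e_2}{f_2})$ be a (possibly nonzero) combination of $2$-faces and then argue that the $\underline{\Phi}$-images ``telescope to zero'', by analogy with the coboundary computation in Corollary~\ref{bd2}. That analogy is misleading: there you were verifying $\delta$ of a specific cocycle vanishes, whereas here you would need to show a Morse cup product is an honest coboundary (or zero), and your telescoping claim is never substantiated. The paper's route is different and more direct: it first writes out $\overline{\Phi}\ma ef$ \emph{explicitly} for each of the five types of basis elements $\lambda_{e,f}$ (a short list of closed formulas, obtained by summing the congruence in Proposition~\ref{fiarriba}(i) over the defining expressions $(R_1)$--$(R_7)$), and then checks case by case that, for a suitable \emph{ordering} of the two factors, there is simply no pair of summands matching either template in~(\ref{pncero}); hence $\overline{\Phi}(x)\smile\overline{\Phi}(y)=0$ in $C^2(C_m)$ before $\underline{\Phi}$ is ever applied. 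The choice of order is essential (the paper records it in a small table), and the diagonal squares $\lambda_{e,f}^2$ are dispatched separately by anticommutativity and torsion-freeness. Your proposal misses both ingredients---the explicit closed forms for $\overline{\Phi}\ma ef$ and the order-of-factors trick---so as written the $\lambda\cdot\lambda$ case is not proved.
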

\begin{proof}
This is a straightforward calculation using Proposition~\ref{fiarriba}. We only indicate the two main checking steps for the reader's benefit. In what follows we assume $m\geq6$. First, congruence \emph{(i)} in Proposition~\ref{fiarriba} is used to check that, modulo 1-faces not taking part on nonzero products~(\ref{pncero}), $\overline{\Phi}\ma ef$ is congruent to:
\begin{itemize}
\item $\displaystyle\sum_{\mbox{\tiny$i{<}m$}}\md iifm\!, \mbox{ for $e=m-1$ and $1\leq f\leq m-4$;}$

\vspace{-.5mm}
\item $\displaystyle\sum_{\mbox{\tiny$j{<}m$}}\left( \md eejm
+\displaystyle\sum_{\mbox{\tiny$x{<}m$}}\md exjj
-\displaystyle\sum_{\mbox{\tiny$y{<}m$}}\md yejj
\right)\!, \mbox{ for $1\leq e\leq m-3$ and $f=m-1$;}$

\vspace{.5mm}
\item $\md {m-1}{m-1}{m-3}m\hspace{.8mm}
-\hspace{-1.5mm}\displaystyle\sum_{\mbox{\tiny$\genfrac{}{}{0pt}{1}{i{<}m{-}1{>}j}{j{\neq}m{-}3}$}}\md iijm \hspace{.8mm}
-\hspace{-1.5mm}\displaystyle\sum_{\mbox{\tiny$i{<}m{-}1{>}j$}}\md i{m-1}jj\!, \mbox{ for $(e,f)=(m-1,m-3)$;}$

\vspace{-2.5mm}
\item $\displaystyle\md {m-2}{m-2}{m-1}m+
\md {m-2}{m-1}{m-1}{m-1}-
\sum_{\mbox{\tiny$y$}}\md y{m-2}{m-1}{m-1}-\hspace{-2mm}
\sum_{\mbox{\tiny$\genfrac{}{}{0pt}{1}{i{<}m{-}1{>}j}{i{\neq}m-2}$}}\!\!\left( \!\md iijm +\!\!\!
\sum_{\mbox{\tiny$\genfrac{}{}{0pt}{1}{x=m-\varepsilon}{\varepsilon\in\{1,2\}}$}} \md ixjj \!\right)\!,
\mbox{ for $(e,f)=(m-2,m-1)$;}$

\item $\displaystyle\sum_{\mbox{\tiny$i{<}m{-}1{>}j$}}\!\!\left(\md iijm+\md i{m-1}jj \right)\!, \mbox{ for $(e,f)=(m-2,m-3)$.}$
\end{itemize}

The above congruences together with those in items \emph{(ii)} and \emph{(iii)} of Proposition~\ref{fiarriba} are then used to check that each of the products asserted to vanish do so because there is no room for nonzero products~(\ref{pncero}) in the corresponding portion $\overline{\Phi}(x)\smile\overline{\Phi}(y)$ of~(\ref{cupprod}). Such an assertion is easily seen for products $\hspace{.3mm}\delta_{b_1,d_1}\hspace{-.6mm}\cdot\delta_{b_2,d_2}$ and $\hspace{.1mm}\upsilon_{a_1,c_1}\hspace{-.8mm}\cdot\hspace{-.2mm}\upsilon_{a_2,c_2}$, but the explicit details are not so direct for $\delta_{b,d}\cdot\upsilon_{a,c}$ and $\hspace{.4mm}\lambda_{e_1,f_1}\hspace{-.6mm}\cdot\hspace{-.2mm}\lambda_{e_2,f_2}$. In fact, in the latter two cases, a convenient order of factors needs to be chosen in order to ensure the vanishing of the corresponding $\overline{\Phi}(x)\smile\overline{\Phi}(y)$. See the table below. The order chosen is immaterial for the trivial-product conclusion, as cohomology cup products are anticommutative. Keep in mind that $H^*(\C(|K_m|,2))$ is torsion free, so that cup squares of 1-dimensional classes are trivial for free.

\begin{center}\begin{tabular}{|l|l|} 
\hline \rule{0mm}{4mm}  

$\delta_{b,d}\cdot\upsilon_{a,c}$, $a\in\{b,d\}$ &
$\upsilon_{a,c}\cdot\delta_{b,d}$, $c\in\{b,d\}$ 
\\ [2pt] \hline \hline \rule{0mm}{4mm}

$\lambda_{m-1,f_1}\cdot\lambda_{m-1,f_2}$, $\hspace{2.3mm}1\leq f_i\leq m-4$ &
$\lambda_{m-1,f}\cdot\lambda_{e,m-1}$, \hspace{4.4mm}$1\leq f\leq m-4$ \;and \, $1\leq e\leq m-3$
\\ [2pt] \hline \rule{0mm}{4mm}

$\lambda_{m-1,f}\cdot\lambda_{m-1,m-3}$, $1\leq f\leq m-4$ &
\\ \rule{0mm}{4mm}

$\lambda_{m-1,f}\cdot\lambda_{m-2,m-3}$, $1\leq f\leq m-4$ &
\\ \rule{0mm}{4mm}

$\lambda_{m-1,f}\cdot\lambda_{m-2,m-1}$, $1\leq f\leq m-4$ &
\\ [2pt] \hline \rule{0mm}{4mm}

$\lambda_{m-1,m-3}\cdot\lambda_{m-2,m-3}$ &
$\lambda_{m-1,m-3}\cdot\lambda_{e,m-1}$, $1\leq e\leq m-3$
\\ \rule{0mm}{4mm}

$\lambda_{m-1,m-3}\cdot\lambda_{m-2,m-1}$ & 
\\ [2pt] \hline \hline \rule{0mm}{4mm}

 & $\lambda_{e_1,m-1}\cdot\lambda_{e_2,m-1}$, \hspace{2mm}$1\leq e_i\leq m-3$ with $e_1>e_2$ 
\\ [2pt] \hline \rule{0mm}{4mm}

$\lambda_{m-2,m-3}\cdot\lambda_{m-2,m-1}$ &
$\lambda_{m-2,m-3}\cdot\lambda_{e,m-1}$, $1\leq e\leq m-3$ 
\\ \rule{0mm}{4mm}

& \raisebox{1mm}{$\lambda_{m-2,m-1}\cdot\lambda_{e,m-1}$, $1\leq e\leq m-3$} \rule{0mm}{5mm} \\ \hline
\end{tabular}\end{center}

Lastly, the fact that $\delta_{b,d}\cdot\upsilon_{a,c}$ is represented by $\mt aacbdd$ when $\{a,b\}\cap\{c,d\}=\varnothing$ follows by noticing that $\underline{\Phi}\left(\overline{\Phi}\md mmbd\smile \overline{\Phi}\md acmm\right)=\mt aacbdd$. Here $\mt aacbdd$ fails to represent one of our basis elements when either $(a,b)=(1,2)$ or $(a,b,c)=(1,3,2)$ or $(a,b,d)=(2,1,3)$ or $(a,b,c,d)=(2,1,3,4)$ (recall~(\ref{typ1})--(\ref{typ4})). In each such case, one of the relations $(E_1)$--$(E_6)$ in Corollary~\ref{bd2} applies to write (the cohomology class of) $\mt aacbdd$ in terms of basis elements. Either way, inspection of the relations $(E_1)$--$(E_6)$ shows that $\mt aacbdd$ represents a nonzero cohomology class.
\end{proof}

\section{Topological complexity}\label{TCsec}
Fix a positive integer $s\geq2$ and a path-connected space $X$. The $s$-th topological complexity $\TC_s(X)$ of $X$ is the sectional category of the evaluation map $e_s\colon PX\to X^s$ which sends a (free) path on $X$, $\gamma\in PX$, to
$$
e_s(\gamma)=\left(\gamma\left(\frac{0}{s-1}\right),\gamma\left(\frac{1}{s-1}\right),\ldots, \gamma\left(\frac{s-1}{s-1}\right)\right).
$$
The term ``sectional category'' is used in the reduced sense, so that $\TC_s(X)+1$ stands for the smallest number of open sets covering $X^s$ on each of which $e_s$ admits a section. For instance, the (reduced) Lusternik-Schnirelmann category $\cat(X)$ of $X$ is the sectional category of the evaluation map $e_1\colon P_0X\to X$ sending a based path $\gamma\in P_0X$ (i.e., $\gamma(0)=\star$, for a fixed base point $\star\in X$) to $e_1(\gamma)=\gamma(1)$. 

\begin{proposition}[{\cite[Theorem~3.9]{bgrt}}]\label{ulbTCnrararar}
For a $c$-connected space $X$ having the homotopy type of a CW complex, 
$$\cl(X)\leq\cat(X)\leq \hdim(X)/(c+1)
\quad\mbox{and}\quad
\zcl_s(X)\leq\TC_s(X)\leq s\cdot \cat(X).$$
\end{proposition}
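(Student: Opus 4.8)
Although the statement is quoted verbatim from~\cite{bgrt}, let me indicate the route one would take to prove its four inequalities, which I would address in the order $\cl(X)\le\cat(X)$, then $\cat(X)\le\hdim(X)/(c+1)$, then $\zcl_s(X)\le\TC_s(X)$, and finally $\TC_s(X)\le s\cdot\cat(X)$.

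The two cup-length lower bounds are instances of a single principle. For $\cl(X)\le\cat(X)$, set $q=\cat(X)$ and choose an open cover $X=U_0\cup\cdots\cup U_q$ with each inclusion $U_j\hookrightarrow X$ null-homotopic; then any positive-degree class $u_j\in\widetilde H^{*}(X;R)$ restricts trivially to $U_j$, hence lifts to $\bar u_j\in H^{*}(X,U_j;R)$, and the relative product $\bar u_0\smile\cdots\smile\bar u_q$ lies in $H^{*}(X,\,U_0\cup\cdots\cup U_q;R)=H^{*}(X,X;R)=0$, forcing $u_0\smile\cdots\smile u_q=0$. The same argument, applied to a cover of $X^s$ by open sets over each of which the evaluation fibration $e_s\colon X^{[0,1]}\to X^s$ admits a section and to classes in $\ker(e_s^{*})$, gives $\zcl_s(X)\le\g(e_s)=\TC_s(X)$; here one uses that $X^{[0,1]}$ deformation retracts onto the constant paths, so that $e_s$ is homotopic over this equivalence to the iterated diagonal $\Delta_s\colon X\to X^s$ and hence $\ker(e_s^{*})=\ker(\Delta_s^{*})$ is exactly the ideal whose cup-length defines $\zcl_s(X)$.

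For the category upper bound I would pass to a CW model of $X$ of dimension $n=\hdim(X)$ and invoke Ganea's characterization: $\cat(X)\le k$ precisely when the $k$-th Ganea fibration $p_k\colon G_k(X)\to X$ has a section, the fiber of $p_k$ being the $(k{+}1)$-fold join $\Omega X\ast\cdots\ast\Omega X$. Since $\Omega X$ is $(c{-}1)$-connected and connectivity is superadditive (by $2$) under joins, this fiber is $\big((k{+}1)(c{+}1)-2\big)$-connected, so in the obstruction-theoretic construction of a section over the skeleta of $X$ all obstruction groups $H^{i+1}\!\big(X;\pi_i(\text{fiber})\big)$ vanish as soon as $i\le(k{+}1)(c{+}1)-2$, i.e.\ as soon as $n\le(k{+}1)(c{+}1)-1$. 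Taking $k=\lceil(n-c)/(c+1)\rceil$ meets this requirement and satisfies $k(c+1)\le n$, so $\cat(X)\le k\le\hdim(X)/(c+1)$. I expect this connectivity/obstruction bookkeeping to be the only genuinely delicate step.

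The last inequality is formal. Any fibration with nonempty total space over a path-connected base $B$ has sectional category at most $\cat(B)$: over a set $V\subseteq B$ contractible in $B$, one lifts a constant map to a point of a fiber and then lifts the null-homotopy of $V\hookrightarrow B$ through the fibration, producing a section over $V$. Applied to $e_s$ this gives $\TC_s(X)=\g(e_s)\le\cat(X^s)$, and iterating the standard product inequality $\cat(Y\times Z)\le\cat(Y)+\cat(Z)$ (obtained by intersecting categorical covers) yields $\cat(X^s)\le s\cdot\cat(X)$, which closes the chain.
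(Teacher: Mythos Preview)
The paper does not supply a proof of this proposition; it is stated with a bare citation to~\cite[Theorem~3.9]{bgrt} and used as a black box in Section~\ref{TCsec}. Your sketch is a correct and standard outline of the four inequalities: the relative-cup-product argument for the two cup-length lower bounds, the Ganea/obstruction-theory argument for the connectivity upper bound on $\cat$, and the chain $\TC_s(X)=\g(e_s)\le\cat(X^s)\le s\cdot\cat(X)$ via the general bound $\g(p)\le\cat(\text{base})$ and the product inequality. There is nothing to compare against in the paper itself.
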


Here $\hdim(X)$ denotes the minimal dimension of cell complexes homotopy equivalent to $X$, while $\cl(X)$ and $\zcl_s(X)$ stand, respectively, for the cup-length of $X$ and the $s$-th zero-divisor cup-length of $X$. Explicitly, $\cl(X)$ is the largest integer $\ell\geq0$ such that there are classes\footnote{For the purposes of this section, cohomology will be taken with mod 2 coefficients.} $c_j\in \widetilde{H}^*(X)$, $1\leq j\leq \ell$, with nonzero cup product. Likewise, $\zcl_s(X)$ is the largest integer~$\ell\geq0$ such that there are classes $z_j\in H^*(X^s)$, $1\leq j\leq\ell$ (``zero divisors'') with nonzero cup product and so that each factor restricts trivially under the diagonal $X \hookrightarrow X^s$.

Let $\Gamma$ be a 1-dimensional cell complex ---a graph. While the fundamental group of $\C(\Gamma,n)$ is a central character in geometric group theory, the topological complexity of $\C(\Gamma,n)$ becomes relevant for the task of planning collision-free motion of $n$ autonomous distinguishable agents moving on a $\Gamma$-shaped system of tracks. It is known that $\hdim(\C(\Gamma,n))$ is bounded from above by $m=m(\Gamma)$, the number of essential vertices of $\Gamma$ (see for instance~\cite[Theorem~4.4]{MR2171804}). Thus, Proposition~\ref{ulbTCnrararar} yields
\begin{equation}\label{uperestimate}
\TC_s(\C(\Gamma,n))\leq s\cdot m.
\end{equation}

For $s=2$, Farber proved in~\cite{farbergraphs} that~(\ref{uperestimate}) is an equality when $\Gamma$ is a tree and $n\geq 2m$, with the single (and well-known) exception of $(n,m)=(2,1)$ with the (unique) essential vertex of~$\Gamma$ having valency~$3$ ---we call it the ``$Y_2$-exception''. Farber also conjectured that the tree restriction would be superfluous in order to have equality in~(\ref{uperestimate}). The conjecture has recently been confirmed in~\cite{ben} by Knudsen, who proved equality in~(\ref{uperestimate}) for any $s\geq 2$ and any graph~$\Gamma$, as long as the ``stable'' restriction $n\geq 2m$ is kept (and the $Y_2$-exception is avoided). Note that the stable condition forces $\hdim(\C(\Gamma,n))=m$. More generally, it would be interesting to characterize the triples $(s,\Gamma,n)$ for which the (in principle) improved bound
\begin{equation}\label{uperestimateimproved}
\TC_s(\C(\Gamma,n))\leq s\cdot\hdim(\C(\Gamma,n))
\end{equation}
holds as an equality, preferably determining the value of $\hdim(\C(\Gamma,n))$. For instance, it is known from~\cite[Section~5]{MR4356250} that, for any~$s$ and $n$ (possibly with $n<2m$),
$$
\hdim(\C(\Gamma,n))=\cat(\C(\Gamma,n))=\min\{\lfloor n/2\rfloor,m\}
$$
when $\Gamma$ is a tree, in which case~(\ref{uperestimateimproved}) is an equality ---recalling the $Y_2$-exception. The goal of this section, Theorem~\ref{TCapp} below, is to add a new and completely different family of instances where equality holds in~(\ref{uperestimateimproved}) outside the stable regime.

\begin{theorem}\label{TCapp}
For $m\geq4$ and $s\geq2$,
$$
\TC_s(\C(|K_m|,2))=s\cdot\hdim(\C(|K_m|,2))=
\begin{cases} s, & \mbox{if $m=4$}; \\ 2s, & \mbox{otherwise.} \end{cases}
$$
\end{theorem}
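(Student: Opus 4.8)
The plan is to bracket $\TC_s(X)$, with $X:=\C(|K_m|,2)$, between the bounds $\zcl_s(X)\le\TC_s(X)\le s\cdot\cat(X)$ of Proposition~\ref{ulbTCnrararar} (cohomology with $\mathbb{Z}/2$ coefficients). Since $\cat$ is a homotopy invariant bounded above by the dimension of a CW model, I first pin down $\hdim(X)$. For $m=4$, Corollary~\ref{caso4} gives $X\simeq\bigvee_7S^1$, so $\hdim(X)=1$. For $m\ge5$ the complex $C_m$ is $2$-dimensional, so $\hdim(X)\le2$; and since $H^2(X)$ is free of rank $|\mathcal{B}|\ge1$ by Corollary~\ref{bd2}, in fact $\hdim(X)=2$. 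Hence $\TC_s(X)\le s\cdot\cat(X)\le s\cdot\hdim(X)$, which is $s$ for $m=4$ and $2s$ for $m\ge5$, and it remains to prove the reverse inequality $\TC_s(X)\ge s\cdot\hdim(X)$ by producing enough zero-divisors with nonzero cup product in $H^*(X^s)$.

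For $m=4$ I would take two linearly independent classes $a,b\in H^1(\bigvee_7S^1)$ --- all cup products in $X$ vanish --- and form the $s$ zero-divisors $p_1^*a+p_j^*a$ for $2\le j\le s$ together with $p_1^*b+p_2^*b$; expanding and using $a^2=ab=0$ yields the nonzero product $p_1^*b\smile p_2^*a\smile\cdots\smile p_s^*a+p_1^*a\smile p_2^*b\smile p_3^*a\smile\cdots\smile p_s^*a$, so $\zcl_s(X)\ge s$. For $m=5$, Example~\ref{sigma6} identifies $X$ up to homotopy with the closed orientable surface of genus~$6$, whose higher topological complexity $\TC_s=2s$ is classical, and I would simply quote this.

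For $m\ge6$ the decisive feature is that $H^2(X)$ has larger rank than for a surface, so the mod-$2$ cancellations that hold $\zcl_s$ of a surface below $2s$ no longer occur. Using Corollary~\ref{preservados} I would choose two ``symplectic pairs'' of $1$-classes, $(\delta_1,\upsilon_1):=(\delta_{1,2},\upsilon_{3,4})$ and $(\delta_2,\upsilon_2):=(\delta_{1,3},\upsilon_{2,4})$, for which $\delta_i\smile\delta_j=\upsilon_i\smile\upsilon_j=0$ holds automatically, $\delta_1\smile\upsilon_2=0$ (the index sets $\{1,2\}$ and $\{2,4\}$ intersect), and $g_i:=\delta_i\smile\upsilon_i\ne0$ with $g_1=\mtt 334122$ a basis element of $\mathcal{B}$ and $g_2=\mtt 224133$ an element of $\mathcal{C}$ representing a class distinct from $g_1$ (rewrite $g_2$ via the congruence $(E_6)$ of Corollary~\ref{bd2}). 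I would then consider the $2s$ zero-divisors $p_1^*\delta_1+p_j^*\delta_1$ and $p_1^*\upsilon_1+p_j^*\upsilon_1$ for $2\le j\le s$, together with $p_1^*\delta_2+p_2^*\delta_2$ and $p_1^*\upsilon_2+p_2^*\upsilon_2$, and show that their cup product in $H^{2s}(X^s)=(H^2X)^{\otimes s}$ equals $g_1\otimes g_2\otimes g_1^{\otimes(s-2)}+g_2\otimes g_1^{\otimes(s-1)}$, which is nonzero precisely because $g_1\ne g_2$. This gives $\zcl_s(X)\ge2s$ and closes the proof.

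The main obstacle is this last computation. One expands the product of the $2s$ zero-divisors, observes that for each $j\ge3$ the factor $(p_1^*\delta_1+p_j^*\delta_1)(p_1^*\upsilon_1+p_j^*\upsilon_1)$ is forced to contribute its summand $p_j^*g_1$ (any split leaving a single $1$-class in coordinate $j$ keeps that coordinate below top degree), and then checks that, once those $s-2$ contributions are fixed, every surviving monomial in coordinates $1$ and $2$ vanishes by the relations $\delta_i\delta_j=\upsilon_i\upsilon_j=\delta_1\upsilon_2=0$ and $g_i^2=0$ except the two displayed ones. A subsidiary point, handled by inspecting the congruences $(E_1)$--$(E_6)$, is to confirm that the chosen products $g_1,g_2$ indeed represent distinct nonzero cohomology classes; this is exactly the step where Corollary~\ref{bd2}, rather than merely the list of critical faces of Proposition~\ref{Wn}, is needed.
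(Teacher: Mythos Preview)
Your proof is correct and follows essentially the same strategy as the paper: bound $\TC_s$ above via $s\cdot\hdim$ and below by exhibiting $2s$ zero-divisors built from two $(\delta,\upsilon)$-pairs whose products give distinct nonzero classes in $H^2$, so that the resulting tensor $g_1\otimes g_2+g_2\otimes g_1$ (padded by $g_1^{\otimes(s-2)}$) survives mod~$2$. The only differences are cosmetic: you use the second pair $(\delta_{1,3},\upsilon_{2,4})$ with $g_2=\mt 224133$ rewritten via $(E_6)$, whereas the paper uses $(\upsilon_{1,3},\delta_{2,4})$ with $zw=\mt 113244$ rewritten via $(E_2)$, places the special coordinate at position $s$ rather than $2$, and packages the tensor computation as a standalone Lemma~\ref{productosdeemilio}.
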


Note that $\C(|K_m|,2)$ is empty for $m=1$, and disconnected for $m=2$, while $\C(|K_3|,2)$ leads to the $Y_2$-exception. On the other hand, the cases $m=4$ and $m=5$ in Theorem~\ref{TCapp} are well known in view of Corollary~\ref{caso4} and the last assertion in Example~\ref{sigma6}. We prove Theorem~\ref{TCapp} for $m\geq6$ by constructing~$2s$ zero divisors in $\C(|K_m|,2)$ with a nonzero cup product, and using Proposition~\ref{ulbTCnrararar} together with the obvious fact that $\hdim(\C(|K_m|,2))\leq2$. It is natural to think that the expected richness of cup products in general graph configuration spaces might lead to many more instances where~(\ref{uperestimateimproved}) would hold as an equality.

\smallskip
For integers $1\leq i\leq s\geq2$ and a cohomology class $x$ in a space $X$, consider the exterior tensor product $x_{(i)}:=1\otimes\cdots\otimes1\otimes x\otimes1\otimes\cdots\otimes1\in H^*(X)^{\otimes s}=H^*(X^s)$,
where the tensor factor $x$ appears on the $i$th position. The following result is straightforward to check:

\begin{lemma}\label{productosdeemilio}
Let $x,y,z,w$ be four elements in the mod 2 cohomology of a space $X$ satisfying the relations $x^2=y^2=xz=yz=yw=0$, then
$$
\left(\,\!\prod_{i=2}^s\left(x_{(1)}+x_{(i)}\right)\!\!\right)\!\!
\left(\,\!\prod_{i=2}^s\left(y_{(1)}+y_{(i)}\right)\!\!\right)\!\!
\left(z_{(1)}+z_{(s)}\right)\!\!
\left(w_{(1)}+w_{(s)}\right)=
zw\otimes xy\otimes xy\otimes\cdots\otimes xy+xy\otimes xy\otimes\cdots\otimes xy\otimes zw.
$$
\end{lemma}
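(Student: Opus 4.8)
The plan is to expand the product on the left-hand side factor by factor, using the hypotheses $x^2=y^2=xz=yz=yw=0$ to kill most of the resulting terms. First I would observe that each factor $x_{(1)}+x_{(i)}$ and $y_{(1)}+y_{(i)}$ is a sum of two exterior monomials, and $z_{(1)}+z_{(s)}$, $w_{(1)}+w_{(s)}$ likewise; so the whole product, before simplification, is a sum of $2^{2(s-1)}\cdot 2\cdot 2$ exterior tensor monomials, each of the form $a_1\otimes\cdots\otimes a_s$ where the $j$th slot $a_j$ is a product (in $H^*(X)$) of some subset of the letters $x$'s, $y$'s, $z$, $w$ that landed in slot $j$. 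Since we are working mod $2$ there are no sign issues from the Koszul rule, which is why the statement is clean.

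Next I would analyze which monomials survive. A term dies as soon as some slot $a_j$ contains a forbidden sub-product, i.e. $x^2$, $y^2$, $xz$, $yz$, or $yw$ (in slot $1$ we also get $x^2$ or $y^2$ if two ``$(1)$''-choices of the same letter coincide — but they cannot, since there is only one $x_{(1)}$ factor per product; rather the danger in slot $1$ is, e.g., picking $x_{(1)}$ from one $x$-factor and $y_{(1)}$ from a $y$-factor together with $z_{(1)}$, giving $xyz$ which contains $xz$, hence $0$). The key combinatorial point is: in each of the $s-1$ ``$x$-factors'' we choose either the $(1)$-copy or the $(i)$-copy, and similarly for the $y$-factors, $z$, and $w$. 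If \emph{any} $x$-factor contributes its $(i)$-copy with $i\neq s$ while, say, the $z$-factor contributes $z_{(1)}$, slot $1$ is fine but we must check no slot collects an $x$ together with a $z$; the surviving configurations turn out to force all the ``$(i)$'' choices of $x$ and of $y$ to go to a common slot, and that slot is pinned down by the $z,w$ choices. Carrying this through, the only two monomials with no forbidden sub-product in any slot are: (i) take the $(1)$-copy in every $x$- and $y$-factor and the $(s)$-copies of $z$ and $w$, giving $zw$ in slot $s$ and $xy$ accumulated in slots $2,\dots,s$? — here one must be careful: actually taking all $x_{(1)},y_{(1)}$ puts $x^{s-1}y^{s-1}$-type products in slot $1$, which is $0$ unless $s=2$. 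So the correct bookkeeping is that in the $x$-factors one must take \emph{exactly one} $(1)$-copy impossible — rather, the surviving terms are those where for each $i\in\{2,\dots,s\}$ the $i$th $x$-factor contributes $x_{(i)}$ and the $i$th $y$-factor contributes $y_{(i)}$ (so slot $i$ gets $xy$), together with either all of $z_{(s)},w_{(s)}$ (slot $1$ empty, slot $s$ gets $zw\cdot xy$, but $zw\cdot x$ dies since $xz=0$!). So in fact slot $s$ can only safely be $zw$, forcing the $s$th $x$- and $y$-factors to contribute their $(1)$-copies; then slot $1$ gets $xy$, and slots $2,\dots,s-1$ get $xy$; slot $s$ gets $zw$. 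That is the term $xy\otimes xy\otimes\cdots\otimes xy\otimes zw$. The symmetric choice $z_{(1)},w_{(1)}$ forces the $s$th factors to the $(s)$-copy and yields $zw\otimes xy\otimes\cdots\otimes xy$.

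So the key steps, in order, are: (1) expand the product into exterior monomials; (2) record that slot $j$ receives the $H^*(X)$-product of the chosen letters routed to $j$; (3) run the vanishing analysis slot by slot using $x^2=y^2=xz=yz=yw=0$, noting in particular that $z$ and $w$ can never share a slot with an $x$ (because $xz=0$) and $y$ can share a slot neither with $z$ nor with $w$; (4) deduce that the $z$–$w$ choice must be ``all $(1)$'' or ``all $(s)$'', and that this choice then dictates the routing of every $x$- and $y$-factor; (5) collect the two surviving monomials and check they are exactly $zw\otimes xy\otimes\cdots\otimes xy$ and $xy\otimes\cdots\otimes xy\otimes zw$. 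The main obstacle — really the only subtlety — is step (4): making the case analysis airtight, i.e. verifying that no ``mixed'' routing (some $x$-factors to slot $s$, others to slot $1$, or a split of the $z,w$ choices) can survive, and in particular handling the boundary case $s=2$ where the product has only the two $z$- and $w$-factors beyond a single $x$- and $y$-factor, so that the statement reads $(x_{(1)}+x_{(2)})(y_{(1)}+y_{(2)})(z_{(1)}+z_{(2)})(w_{(1)}+w_{(2)})=zw\otimes xy+xy\otimes zw$, which one checks directly. Since everything is mod $2$ and finite, this is entirely mechanical once the routing principle is isolated; there is no conceptual difficulty, only the need to be careful with the bookkeeping, which is why the lemma is asserted to be ``straightforward to check''.
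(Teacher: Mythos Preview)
Your approach is exactly the intended one: the paper offers no proof beyond the remark ``straightforward to check'', and the direct expansion followed by a slot-by-slot vanishing analysis is precisely that check. Your final bookkeeping is correct: with $A,B\subseteq\{2,\dots,s\}$ recording which $x$- and $y$-factors route to slot~$1$, the constraint $x^2=y^2=0$ forces $|A|,|B|\le 1$, and then the placement of $z$ determines $A$ and $B$ completely (either $A=B=\varnothing$ when $z_{(1)}$, or $A=B=\{s\}$ when $z_{(s)}$), after which $yw=0$ pins down $w$ to the same slot as $z$, yielding exactly the two stated monomials.

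One small slip to clean up: in your step~(3) you write that ``$z$ and $w$ can never share a slot with an $x$ (because $xz=0$)''. There is no hypothesis $xw=0$, so $w$ is \emph{not} barred from sharing a slot with $x$; what kills the mixed $z_{(1)},w_{(s)}$ and $z_{(s)},w_{(1)}$ cases is that $y$ is already forced into the slot receiving $w$, and $yw=0$. Your case analysis in fact uses $yw=0$ correctly at those points, so the error is only in the summary sentence, not in the argument itself. The rambling self-corrections in the middle paragraphs should also be pruned in a final write-up, but the mathematics underneath them is sound.
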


\begin{proof}[Proof of Theorem~\ref{TCapp} for $m\geq6$]
In view of Corollary~\ref{preservados} and Lemma~\ref{productosdeemilio}, the 1-dimensional basis elements $x=\delta_{1,2},\,y=\upsilon_{3,4},\,z=\upsilon_{1,3},\,w=\delta_{2,4}\in H^*\C(|K_m|,2)$ yield a product of $2s$ zero divisors. The product is represented by
\begin{equation}\label{prodnocero}
\mt 113244\otimes\mt 334122\otimes\mt 334122\otimes\cdots\otimes\mt 334122+\mt 334122\otimes\mt 334122\otimes\cdots\otimes\mt 334122\otimes\mt 113244.
\end{equation}
The tensor factor $\mt 334122$ represents one of the basis elements in the previous section. However, as indicated in Figure~\ref{esesi}, we need to apply relation ($E_2$) in Corollary~\ref{bd2} in order to write the (cohomology class of the) tensor factor $\mt 113224$ as a sum $\sum b_i$ of basis elements $b_i$ (recall we work mod 2). If $m\geq6$, the basis element $\mt 335244$ appears as a summand~$b_i$, from which the non-triviality of~(\ref{prodnocero}) follows.
\end{proof}

%\bibliographystyle{plain}
%\bibliography{bib}

\begin{thebibliography}{10}

\bibitem{MR2701024}
Aaron~David Abrams.
\newblock {\em Configuration spaces and braid groups of graphs}.
\newblock ProQuest LLC, Ann Arbor, MI, 2000.
\newblock Thesis (Ph.D.)--University of California, Berkeley.

\bibitem{MR4356250}
Jorge Aguilar-Guzm\'{a}n, Jes\'{u}s Gonz\'{a}lez, and Teresa Hoekstra-Mendoza.
\newblock Farley-{S}abalka's {M}orse-theory model and the higher topological
  complexity of ordered configuration spaces on trees.
\newblock {\em Discrete Comput. Geom.}, 67(1):258--286, 2022.

\bibitem{MR4080487}
Byung~Hee An, Gabriel~C. Drummond-Cole, and Ben Knudsen.
\newblock Edge stabilization in the homology of graph braid groups.
\newblock {\em Geom. Topol.}, 24(1):421--469, 2020.

\bibitem{MR2491587}
Kathryn Barnett and Michael Farber.
\newblock Topology of configuration space of two particles on a graph. {I}.
\newblock {\em Algebr. Geom. Topol.}, 9(1):593--624, 2009.

\bibitem{bgrt}
Ibai Basabe, Jes{\'u}s Gonz{\'a}lez, Yuli~B. Rudyak, and Dai Tamaki.
\newblock Higher topological complexity and its symmetrization.
\newblock {\em Algebr. Geom. Topol.}, 14(4):2103--2124, 2014.

\bibitem{MR3797072}
Safia Chettih and Daniel L\"{u}tgehetmann.
\newblock The homology of configuration spaces of trees with loops.
\newblock {\em Algebr. Geom. Topol.}, 18(4):2443--2469, 2018.

\bibitem{farbergraphs}
Michael Farber.
\newblock Collision free motion planning on graphs.
\newblock In {\em Algorithmic Foundations of Robotics VI}, volume~17 of {\em
  Springer Tracts in Advanced Robotics}, pages 123--138. Springer, Berlin,
  Heidelberg., 2005.

\bibitem{MR2745669}
Michael Farber and Elizabeth Hanbury.
\newblock Topology of configuration space of two particles on a graph, {II}.
\newblock {\em Algebr. Geom. Topol.}, 10(4):2203--2227, 2010.

\bibitem{MR2359035}
Daniel Farley.
\newblock Presentations for the cohomology rings of tree braid groups.
\newblock In {\em Topology and robotics}, volume 438 of {\em Contemp. Math.},
  pages 145--172. Amer. Math. Soc., Providence, RI, 2007.

\bibitem{MR2171804}
Daniel Farley and Lucas Sabalka.
\newblock Discrete {M}orse theory and graph braid groups.
\newblock {\em Algebr. Geom. Topol.}, 5:1075--1109, 2005.

\bibitem{MR2355034}
Daniel Farley and Lucas Sabalka.
\newblock On the cohomology rings of tree braid groups.
\newblock {\em J. Pure Appl. Algebra}, 212(1):53--71, 2008.

\bibitem{MR2949126}
Daniel Farley and Lucas Sabalka.
\newblock Presentations of graph braid groups.
\newblock {\em Forum Math.}, 24(4):827--859, 2012.

\bibitem{MR1358614}
Robin Forman.
\newblock A discrete {M}orse theory for cell complexes.
\newblock In {\em Geometry, topology, \& physics}, Conf. Proc. Lecture Notes
  Geom. Topology, IV, pages 112--125. Int. Press, Cambridge, MA, 1995.

\bibitem{MR1926850}
Robin Forman.
\newblock Discrete {M}orse theory and the cohomology ring.
\newblock {\em Trans. Amer. Math. Soc.}, 354(12):5063--5085, 2002.

\bibitem{MR1873106}
Robert Ghrist.
\newblock Configuration spaces and braid groups on graphs in robotics.
\newblock In {\em Knots, braids, and mapping class groups---papers dedicated to
  {J}oan {S}. {B}irman ({N}ew {Y}ork, 1998)}, volume~24 of {\em AMS/IP Stud.
  Adv. Math.}, pages 29--40. Amer. Math. Soc., Providence, RI, 2001.

\bibitem{MR2605308}
Robert Ghrist.
\newblock Configuration spaces, braids, and robotics.
\newblock In {\em Braids}, volume~19 of {\em Lect. Notes Ser. Inst. Math. Sci.
  Natl. Univ. Singap.}, pages 263--304. World Sci. Publ., Hackensack, NJ, 2010.

\bibitem{MR1882808}
Robert~W. Ghrist and Daniel~E. Koditschek.
\newblock Safe cooperative robot dynamics on graphs.
\newblock {\em SIAM J. Control Optim.}, 40(5):1556--1575, 2002.

\bibitem{tere}
Jes\'us Gonz\'alez and Teresa~I. Hoekstra-Mendoza.
\newblock Cohomology ring of tree braid groups and exterior face rings.
\newblock {\em \emph{Submitted. Available from
  https://arxiv.org/abs/2106.13355}}.

\bibitem{ben}
Ben Knudsen.
\newblock The topological complexity of pure graph braid groups is stably
  maximal.
\newblock {\em \emph{Available from https://arxiv.org/abs/2206.06268}}.

\bibitem{MR3000570}
Ki~Hyoung Ko and Hyo~Won Park.
\newblock Characteristics of graph braid groups.
\newblock {\em Discrete Comput. Geom.}, 48(4):915--963, 2012.

\bibitem{MR3659699}
Tomasz Macia\'{z}ek and Adam Sawicki.
\newblock Homology groups for particles on one-connected graphs.
\newblock {\em J. Math. Phys.}, 58(6):062103, 24, 2017.

\bibitem{MR755006}
James~R. Munkres.
\newblock {\em Elements of algebraic topology}.
\newblock Addison-Wesley Publishing Company, Menlo Park, CA, 1984.

\bibitem{MR4050061}
Eric Ramos.
\newblock An application of the theory of {FI}-algebras to graph configuration
  spaces.
\newblock {\em Math. Z.}, 294(1-2):1--15, 2020.

\end{thebibliography}

{\sc \ 

Departamento de Matem\'aticas

Centro de Investigaci\'on y de Estudios Avanzados del I.P.N.

Av.~Instituto Polit\'ecnico Nacional n\'umero~2508, San Pedro Zacatenco

M\'exico City 07000, M\'exico.}

\tt jgonzalez@math.cinvestav.mx

jesus@math.cinvestav.mx

\end{document}